\theoremstyle{plain}
\newtheorem{thm}{Theorem}[section]
\newtheorem{lem}[thm]{Lemma}
\newtheorem{defn}[thm]{Definition}
\newtheorem{cor}[thm]{Corollary}
\newtheorem{clm}[thm]{Claim}
\theoremstyle{definition}
\newtheorem{rem}{Remark}[section]
\newcommand{\disp}{\displaystyle}
\DeclareMathOperator{\diam}{diam}
\DeclareMathOperator{\tr}{tr}
\DeclareMathOperator{\di}{div}
\newcommand{\eps}{\varepsilon}
\newcommand{\vp}{\varphi}
\newcommand{\al}{\alpha}
\newcommand{\be}{\beta}
\newcommand{\ga}{\gamma}
\newcommand{\de}{\delta}
\newcommand{\te}{\theta}
\newcommand{\la}{\lambda}
\newcommand{\om}{\omega}
\newcommand{\Om}{\Omega}
\newcommand{\si}{\sigma}
\newcommand{\ol}{\overline}
\newcommand{\nid}{\noindent}
\newcommand{\iny}{\infty}
\newcommand{\del}{ \partial}
\newcommand{\su}{\subset}
\newcommand{\LP}{\Delta}
\newcommand{\gr}{\nabla}
\newcommand{\norm}[1]{\left\vert \left\vert #1\right\vert\right\vert}
\newcommand{\abs}[1]{\left\vert#1\right\vert}
\newcommand{\set}[1]{\left\{#1\right\}}
\newcommand{\brac}[1]{\left[#1\right]}
\newcommand{\pr}[1]{\left( #1 \right) }
\newcommand{\N}{\ensuremath{\mathbb{N}}}
\newcommand{\R}{\ensuremath{\mathbb{R}}}
\numberwithin{equation}{section}
\title{The Landis Conjecture for variable coefficient second-order elliptic PDE\small{s}}
\author[Davey]{Blair Davey}
\address{Department of Mathematics, CCNY CUNY, New York, NY 10031, USA}
\email{bdavey@ccny.cuny.edu}
\author[Kenig]{Carlos Kenig}
\address{Department of Mathematics, University of Chicago, Chicago, IL 60637, USA}
\email{cek@math.uchicago.edu}
\thanks{Kenig is supported in part by DMS-1265429.}
\author[Wang]{Jenn-Nan Wang}
\address{Institute of Applied Mathematical Sciences, NCTS, National Taiwan University,
Taipei 106, Taiwan}
\email{jnwang@math.ntu.edu.tw}
\thanks{Wang is supported in part by MOST 102-2115-M-002-009-MY3.}
\date{}
\begin{document}
\maketitle

\begin{abstract}
In this work, we study the Landis conjecture for second-order elliptic equations in the plane. 
Precisely, assume that $V\ge 0$ is a measurable real-valued function satisfying $\norm{V}_{L^\iny\pr{\R^2}} \le 1$. Let $u$ be a real solution to $\di \pr{A \gr u} - V u = 0$ in $\R^2$. 
Assume that $\abs{u\pr{z}} \le \exp\pr{c_0 \abs{z}}$ and $u\pr{0} = 1$. 
Then, for any $R$ sufficiently large,
\[
\inf_{\abs{z_0} = R} \norm{u}_{L^\iny\pr{B_1\pr{z_0}}} \ge \exp\pr{- C R \log R}.
\]
In addition to equations with electric potentials, we also derive similar estimates for equations with magnetic potentials. 
The proofs rely on transforming the equations to Beltrami systems and Hadamard's three-quasi-circle theorem. 
\end{abstract}
\section{Introduction}

In this work, we study the asymptotic uniqueness for general second-order elliptic equations in the whole space.  One typical example we have in mind is
\begin{equation}
L u - V u := \di \pr{A \gr u} - V u = 0\quad\text{in}\quad\R^n,
\label{epde}
\end{equation}
where $A$ is symmetric and uniformly elliptic with Lipschitz continuous coefficients and $V$ is essentially bounded. For \eqref{epde}, we are interested in the following Landis type conjecture: assume that $\|V\|_{L^{\infty}(\R^n)}\le 1$ and $\|u\|_{L^{\infty}(\R^n)}\le C_0$ satisfies $|u(x)|\le C\exp(-C|x|^{1+})$, then $u\equiv 0$. When $L=\Delta$, counterexamples to the Landis conjecture were constructed by Meshkov in \cite{m92} where the exponent $4/3$ was shown to be optimal for complex-valued potentials and solutions. A quantitative form of Meshkov's result was derived by Bourgain and Kenig \cite{bk05} in their resolution of Anderson localization for the Bernoulli model in higher dimensions. The proof of Bourgain and Kenig's result was based on Carleman type estimates. Using the Carleman method, other related results for the general second elliptic equation involving the first derivative terms were obtained in \cite{D14} and \cite{LW14}. 

The known results mentioned above indicate that the exponent $1$ in the Landis type conjecture is not true for general coefficients and solutions. Therefore, we want to study the same question when $A$ and $V$ of \eqref{epde} are real-valued and the solution $u$ is also real. In the case where $L=\Delta$, $n=2$, and $V\ge 0$, a quantitative Landis conjecture was proved in \cite{KSW15}. Precisely, let $u$ be a real solution of $\Delta u-V u=0$ in $\R^2$ satisfying $u(0)=1, |u(x)|\le \exp(C_0|x|)$, where $\|V\|_{L^\infty}\le 1$ and $V\ge 0$.
Then for $R$ sufficiently large,
\[
\inf_{|x_0|=R}\sup_{|x-x_0|<1}|u(x)|\ge \exp(-CR\log R),
\]
where $C$ depends on $C_0$. 

Here we would like to generalize this result to the second-order elliptic operator $L$. 
Let $A$ be symmetric and uniformly elliptic with Lipschitz continuous coefficients. 
That is, for some $\lambda\in(0,1]$,
\begin{align}
&A = \brac{\begin{array}{ll} a_{11} & a_{12} \\ a_{21} & a_{22} \end{array}}, \, a_{12} = a_{21}
\label{symm} \\
& \la \abs{\xi}^2 \le a_{ij}\pr{x} \xi_i \xi_j \le \la^{-1} \abs{\xi}^2, \;\; \text{ for all } \, x \in \R^2, \xi \in \R^2.
\label{ellip} 
\end{align}
Since $A$ is Lipschitz continuous, then there exists $\mu > 0$ such that
\begin{align}
\norm{\gr a_{ij}}_{\iny} \le \mu \quad \text{ for each } \;\; i, j = 1, 2.
\label{gradDec}
\end{align}
The ellipticity condition \eqref{ellip} implies that
\begin{align}
& a_{ii} \ge \la \quad \text{ for each } \;\; i = 1, 2
\label{bndBelow} \\
& a_{ij} \le C\la^{-1} \quad \text{ for each } \;\; i, j = 1, 2.
\label{bndAbove}
\end{align}
We define the leading operator
\begin{equation}
L = \di\pr{A \gr}.
\label{Ldef}
\end{equation}

\begin{rem}
We will often use that $L$ is a divergence-form operator.
However, it will at times be useful to think of $L$ in non-divergence form:
$$L = \del_i\pr{a_{ij} \del_j u} = a_{ij} \del_{ij} u + \del_i a_{ij} \del_j u := a_{ij} \del_{ij} u + b_j \del_j u.$$
It follows from \eqref{gradDec} that $b \in L^\iny$ with $\norm{b_j}_{\iny} \le 2 \mu$ for each $j = 1, 2$.
\end{rem}

By building on the techniques developed in \cite{KSW15}, we will prove quantitative versions of Landis' conjecture when the leading operator is $L$.
As in \cite{KSW15}, to prove each Landis theorem, we first establish an appropriate order-of-vanishing estimate, then we apply the shift and scale argument from \cite{bk05}. 
We use the notation $B_r$ to denote a ball of radius $r$ centered at the origin.
As defined in Section \ref{quasi}, $Q_s$ denotes a quasi-ball of radius $s$ centered at the origin that is associated to an elliptic operator.
Constants $b$ and $d$ are chosen so that $B_b \su Q_1$ and $Q_{7/5} \Subset B_d$.
It is shown in Section \ref{quasi} that such ball exists, and they are bounded in terms of the ellipticity constant.
The functions $\si$ and $\rho$, which are introduced at the end of Section \ref{quasi} (see \eqref{2.1} and \eqref{2.2}), are used below to define $b$ and $d$.
The first maximal order-of-vanishing theorem that we will discuss is the following.

\begin{thm}
Set $b = \si\pr{1; \la}$, $d = \rho\pr{\frac{7}{5}; \la} + \frac{2}{5}$.
Let $u$ be a real-valued solution to
\begin{equation}
L u - V u = 0 \;\; \text{ in } B_d \subset \R^2,
\label{localPDE}
\end{equation}
where $V\ge 0$ and $A$ satisfies assumptions \eqref{symm} and \eqref{ellip}.
Assume that
\begin{align}
&\norm{u}_{L^\iny\pr{B_d }} \le \exp\pr{C_0 \sqrt{M}} 
\label{uBd} \\
& \norm{u}_{L^\iny\pr{B_b}} \ge 1 
\label{lowerBd} \\
& \norm{V}_{L^\iny\pr{B_d }} \le M
\label{VBd} \\
& \norm{\gr a_{ij}}_{L^\iny\pr{B_d }} \le \mu \sqrt{M},
\label{aGrBd} 
\end{align}
where $M \ge 1$.
Then there exists $C = C\pr{C_0, \la, \mu}$ so that
\begin{equation}
\norm{u}_{L^\iny\pr{B_r}} \ge r^{C \sqrt{M}}.
\label{smallBallEst}
\end{equation}
\label{localEst}
\end{thm}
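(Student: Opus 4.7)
The plan is to mimic the strategy of \cite{KSW15} for the Laplacian $\Delta$: transform the equation into a Beltrami system, apply Hadamard's three-quasi-circle theorem, and combine with the upper and lower bounds \eqref{uBd}--\eqref{lowerBd}. The two complications beyond the $\Delta$ case are that the operator $L$ is no longer conformally invariant, so one must work with the quasi-balls $Q_s$ adapted to $L$, and that the Lipschitz coefficients produce an additional drift after reducing to a zeroth-order-free equation.

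\textbf{Positive multiplier and reduction to drift form.} Using $V\ge 0$, I would first build a positive solution $\phi$ of $L\phi - V\phi = 0$ in $B_d$, for instance by solving the Dirichlet problem with boundary data $1$. The Krylov--Safonov/Harnack inequality applied to the non-divergence form $a_{ij}\del_{ij}\phi + b_j\del_j\phi - V\phi = 0$, after the rescaling $x\mapsto x/\sqrt M$ that renormalizes the zeroth-order coefficient to size one, yields the Harnack-chain estimate $e^{-C\sqrt M}\le \phi\le e^{C\sqrt M}$ on $Q_{7/5}$ together with the Caccioppoli-type bound $\norm{\phi^{-1}\gr\phi}_{L^\iny\pr{Q_{7/5}}}\le C\sqrt M$. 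Setting $w = u/\phi$ then removes the potential: $w$ solves the divergence-form first-order equation $\di\pr{\phi^2 A\gr w}=0$, and on each quasi-ball $Q_s$ the norms $\norm{u}_{L^\iny\pr{Q_s}}$ and $\norm{w}_{L^\iny\pr{Q_s}}$ differ only by a factor $e^{C\sqrt M}$.

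\textbf{Beltrami reduction and three-quasi-circle.} In two dimensions the equation $\di\pr{\phi^2 A\gr w}=0$ admits a stream function $\tilde w$ via $\phi^2 A\gr w = J\gr\tilde w$ with $J$ the counterclockwise rotation by $\pi/2$. Then $W = w+i\tilde w$ satisfies a Beltrami system $\del_{\bar z} W = \nu\,\del_z W + \tilde\nu\,\overline{\del_z W}$ whose coefficients depend only on $A$ and $\phi^2$ and satisfy $\abs{\nu}+\abs{\tilde\nu}\le 1 - \la/C$. The quasi-conformal straightening $\chi$ built from $\si$ and $\rho$ of Section \ref{quasi} sends $Q_s$ to $B_s$ and converts $W\circ\chi^{-1}$ into a quasi-holomorphic function on standard disks. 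The similarity (Bers--Vekua) principle then factors $W\circ\chi^{-1} = e^{S(z)}F(z)$ with $F$ holomorphic and $\norm{S}_\iny\le C\sqrt M$. Applying Hadamard's three-circle theorem to $\abs{F}$ on the radii $r$, $1$, $7/5$, and using \eqref{uBd}--\eqref{lowerBd} together with the inclusions $B_b\su Q_1$ and $Q_{7/5}\Subset B_d$, yields $\norm{w}_{L^\iny\pr{B_r}}\ge r^{C\sqrt M}$; undoing $u=\phi w$ produces \eqref{smallBallEst}.

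\textbf{Main obstacle.} The argument hinges on a vanishing rate of $\sqrt M$ rather than $M$, and this $\sqrt M$ scaling must be tracked at every stage. It enters first in the Harnack/Caccioppoli estimates for $\phi$ — which rely on normalizing the zeroth-order coefficient via the rescaling $x\mapsto x/\sqrt M$, and for which the hypothesis \eqref{aGrBd} is crucial — and second in the similarity-principle factor $e^S$, since the Beltrami remainder inherits its $L^\iny$ size from $\phi^{-1}\gr\phi$ and from the Lipschitz constant of $A$. Propagating the sharp $\sqrt M$ exponent uniformly through the quasi-conformal straightening and the stream-function construction, and in particular verifying that the distortion of $\chi$ does not damage the three-circle weight $\log(7/(5r))/\log(7/5)$, is the most delicate part of the proof.
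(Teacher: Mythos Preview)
Your outline has the right architecture—positive multiplier, divergence-free reduction, stream function, Beltrami equation, similarity principle, three-circle—but the Beltrami step as written does not work, and this is where the $\sqrt M$ exponent is won or lost.

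You set $w=u/\phi$ and form $W=w+i\tilde w$, where $\tilde w$ is the stream function for $\phi^2 A\gr w$. The resulting Beltrami coefficients are those of the matrix $B=\phi^2A$, not of $A$. Since $\phi$ ranges between $e^{-C\sqrt M}$ and $e^{C\sqrt M}$, the matrix $B$ does \emph{not} satisfy a uniform two-sided bound of the form \eqref{ellip}; already in the scalar case $B=cI$ one gets $\bar\del W=\frac{1-c}{1+c}\,\overline{\del W}$, and $\frac{1-c}{1+c}$ approaches $\pm1$ as $c\to 0,\iny$. So the claimed bound $\abs{\nu}+\abs{\tilde\nu}\le 1-\la/C$ is false uniformly in $M$, and every downstream constant (the $C_p$ in the similarity principle, the distortion of any quasi-conformal straightening, the three-circle exponent) blows up with $M$.

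The paper's remedy is to take instead $w=\phi^2 v+i\tilde v=\phi u+i\tilde v$. A direct computation (using the stream-function relations \eqref{streamFunc}) then gives
\[
Dw=\al\,(w+\bar w),\qquad \al=D(\log\phi),
\]
where $D$ is the Beltrami operator \eqref{DDef} built from $A$ alone; its coefficients obey $\abs{\eta}+\abs{\nu}\le\frac{1-\la}{1+\la}$ by Lemma~\ref{etanuBds}, independently of $M$. The entire $M$-dependence is pushed into the inhomogeneity $\al$, and Lemma~\ref{LinyBd} gives $\norm{\al}_{L^\iny}\le C\sqrt M$. The similarity principle (Corollary~\ref{simCor}) then produces $w=fg$ with $D_wf=0$ and $e^{-C\sqrt M}\le\abs{g}\le e^{C\sqrt M}$.

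A second point: the paper does not perform a quasi-conformal straightening $\chi:Q_s\to B_s$. Instead it proves directly (Lemma~\ref{logLem}) that $\hat D f=0$ implies $\hat L\log\abs{f}=0$ away from zeros, and then runs the maximum-principle proof of Hadamard's theorem on the level sets $Z_s$ of the fundamental solution $\hat G$, obtaining the sharp three-quasi-circle inequality of Theorem~\ref{Hadamard} with no distortion loss. Your straightening route could in principle be made to work, but it is not set up in Section~\ref{quasi} (the functions $\si,\rho$ are radii, not maps), and you would need to check that the composition with $\chi^{-1}$ does not cost more than a constant in the three-circle exponent.
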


As in \cite{bk05}, a scaling argument shows that the following quantitative form of Landis' conjecture follows from Theorem \ref{localEst}.

\begin{thm}
Assume that $V: \R^2 \to \R$ is measurable and satisfies
$$\norm{V}_{L^\iny\pr{\R^2}} \le 1.$$
Assume also that $V \ge 0$ a.e. in $\R^2$.
Let $u$ be a real solution to
\begin{equation}
L u - V u = 0 \,\,\text{ in }\,\,  \R^2,
\label{EPDE}
\end{equation}
where $A$ satisfies the assumptions \eqref{symm} -- \eqref{gradDec}.
Assume that $\abs{u\pr{z}} \le \exp\pr{c_0 \abs{z}}$ and $u\pr{0} = 1$, where $z = \pr{x,y}$.
Let $z_0 = \pr{x_0, y_0}$.
Then, for any $R$ sufficiently large,
\begin{align}
\inf_{\abs{z_0} = R} \norm{u}_{L^\iny\pr{B_1\pr{z_0}}} \ge \exp\pr{- C R \log R},
\label{LandisBd}
\end{align}
where $C$ depends on $c_0$, $\la$, $\mu$.
\label{Landis}
\end{thm}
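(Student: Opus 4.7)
The plan is to deduce Theorem \ref{Landis} from the local order-of-vanishing Theorem \ref{localEst} via the shift-and-scale argument of Bourgain--Kenig \cite{bk05}, which the authors already flag. The strategy is to translate and dilate the problem so that the far-away point $z_0$ becomes the new origin, while the original normalization point $0$ is moved into the interior of the quasi-ball $B_b$ where Theorem \ref{localEst} demands a lower bound. Under a rescaling by a factor of order $R$, the effective potential bound $M$ becomes of order $R^2$, so that evaluating \eqref{smallBallEst} at radius of order $1/R$ converts the polynomial $r^{C\sqrt{M}}$ decay into precisely the $\exp\pr{-CR\log R}$ estimate.

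Concretely, fix $z_0$ with $\abs{z_0} = R$, choose any $b' \in \pr{0, b}$ strictly less than $b$ (say $b' = b/2$), and set $\alpha = R/b'$. Define the shifted-and-scaled function $\tilde u\pr{z} = u\pr{z_0 + \alpha z}$, coefficients $\tilde a_{ij}\pr{z} = a_{ij}\pr{z_0 + \alpha z}$, and potential $\tilde V\pr{z} = \alpha^2 V\pr{z_0 + \alpha z}$. A routine chain-rule calculation then shows that $\tilde u$ satisfies $\di\pr{\tilde A \gr \tilde u} - \tilde V \tilde u = 0$ with the same symmetry and ellipticity constant $\la$ as $A$. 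I would then verify the four hypotheses of Theorem \ref{localEst} with $M = \alpha^2 = R^2/\pr{b'}^2$: the bound $\norm{\tilde V}_{L^\iny\pr{B_d}} \le \alpha^2 \norm{V}_{L^\iny\pr{\R^2}} \le M$ is immediate; the gradient bound $\norm{\gr \tilde a_{ij}}_{L^\iny\pr{B_d}} = \alpha \norm{\gr a_{ij}}_{L^\iny\pr{\R^2}} \le \mu \alpha = \mu \sqrt{M}$ follows from \eqref{gradDec}; the exponential-growth hypothesis yields $\abs{\tilde u\pr{z}} \le \exp\pr{c_0\pr{R + \alpha d}} \le \exp\pr{C_0 \sqrt{M}}$ on $B_d$ for a suitable $C_0 = C_0\pr{c_0, \la}$; and finally $u\pr{0} = 1$ translates into $\tilde u\pr{-z_0/\alpha} = 1$ with $\abs{-z_0/\alpha} = b' < b$, so $\norm{\tilde u}_{L^\iny\pr{B_b}} \ge 1$.

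Applying Theorem \ref{localEst} to $\tilde u$ at radius $r = 1/\alpha = b'/R$ gives
\[
\norm{\tilde u}_{L^\iny\pr{B_{1/\alpha}}} \ge \pr{b'/R}^{C \sqrt{M}} = \pr{b'/R}^{C R/b'}.
\]
Since $B_{1/\alpha}$ in the scaled $z$-coordinates corresponds exactly to $B_1\pr{z_0}$ in the original coordinates, this rearranges to $\norm{u}_{L^\iny\pr{B_1\pr{z_0}}} \ge \exp\pr{-C' R \log R}$ uniformly over $\abs{z_0} = R$ for all $R$ large enough, and taking the infimum over such $z_0$ yields \eqref{LandisBd}.

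The essential difficulty is concentrated entirely in Theorem \ref{localEst}; once that local quantitative estimate is available, the deduction is essentially mechanical. The only delicate bookkeeping is making sure the mapped point $-z_0/\alpha$ lands strictly in the interior of $B_b$ rather than on its boundary, which is why I take $b' < b$ strictly. Because $b$ and $d$ depend only on $\la$ through the quasi-ball construction of Section \ref{quasi}, the final constant $C$ in \eqref{LandisBd} depends only on $c_0$, $\la$, and $\mu$, as claimed.
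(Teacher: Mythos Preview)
Your proposal is correct and follows essentially the same shift-and-scale argument as the paper's own proof: your scaling factor $\alpha = R/b'$ plays exactly the role of the paper's $aR$ with $a = 1/b'$, and all four hypotheses of Theorem \ref{localEst} are checked in the same way. The only cosmetic difference is that you insist on $b' < b$ strictly so that the mapped point lies in the \emph{open} ball $B_b$, whereas the paper simply takes $1/a \le b$; this is harmless either way since $u$ is continuous.
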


The second maximal order-of-vanishing theorem applies to equations with a magnetic potential in divergence form.

\begin{thm}
Set $b = \si\pr{1; \la}$, $d = \rho\pr{\frac{7}{5}; \la} + \frac{2}{5}$.
Let $u$ be a real-valued solution to
\begin{equation}
L u + \gr \cdot \pr{W u} - V u = 0 \;\; \text{ in } B_d \subset \R^2,
\label{localPDEgrW}
\end{equation}
where $V\ge 0$ and $A$ satisfies assumptions \eqref{symm} and \eqref{ellip}.
Assume that for some $M \ge 1$, \eqref{uBd} -- \eqref{aGrBd} from above hold, and
\begin{align}
& \norm{W}_{L^\iny\pr{B_d}} \le \sqrt{M}.
\label{WBd}
\end{align}
Then there exists $C = C\pr{C_0, \la, \mu}$ such that \eqref{smallBallEst} holds.
\label{localEstgrW}
\end{thm}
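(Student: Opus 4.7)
The plan is to reduce Theorem \ref{localEstgrW} to the Beltrami-system analysis that underlies Theorem \ref{localEst}, by showing that the divergence-form magnetic term $\gr\cdot\pr{Wu}$ contributes only to the zeroth-order coefficients of that system and at exactly the right scale $\sqrt{M}$. First I would rewrite \eqref{localPDEgrW} in consolidated divergence form
\[
\di\pr{A \gr u + W u} = V u,
\]
and repeat the construction used for Theorem \ref{localEst}: pass to isothermal coordinates adapted to $A$ (the coordinates defining the quasi-balls $Q_s$) and introduce a conjugate $\tilde u$ --- the stream function for the flux $A\gr u + W u$, corrected to accommodate the source $Vu$ --- so that $f = u + i\tilde u$ satisfies a perturbed Beltrami system
\[
\del_{\ol z} f = \al f + \be \ol f
\]
on a quasi-disk containing $Q_{7/5}$.

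Second, I would track the $L^\iny$-norms of $\al$ and $\be$. In the pure electric case these are bounded by $C\sqrt{M}$ thanks to \eqref{VBd}, \eqref{aGrBd}, and the ellipticity \eqref{ellip}. The magnetic term contributes additional summands to $\al$ and $\be$ that are pointwise controlled by $\norm{W}_{L^\iny}$ --- with no derivatives of $W$ appearing, since $Wu$ enters algebraically in the flux --- so by \eqref{WBd} the combined bound $\norm{\al}_\iny + \norm{\be}_\iny \le C\sqrt{M}$ persists. Bers' similarity principle then factors $f = e^{\chi} g$ on $Q_{7/5}$ with $\abs{\chi} \le C\sqrt{M}$ and $g$ quasi-regular. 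Pushing \eqref{uBd} and \eqref{lowerBd} through this factorization yields $\log\norm{g}_{L^\iny\pr{Q_{7/5}}} \le C\sqrt{M}$ and $\log\norm{g}_{L^\iny\pr{Q_1}} \ge -C\sqrt{M}$; Hadamard's three-quasi-circle theorem applied to $g$ on $Q_r \su Q_1 \su Q_{7/5}$ then gives $\log\norm{g}_{L^\iny\pr{Q_r}} \ge C\sqrt{M}\log r$ for small $r$, which, after undoing the similarity and restricting to $B_r \su Q_r$ (ensured by the choice $b = \si\pr{1;\la}$), produces \eqref{smallBallEst}.

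The main obstacle, and essentially the only place where the argument departs from that of Theorem \ref{localEst}, is the verification behind the first step: that $W$ enters the Beltrami coefficients $\al, \be$ only multiplicatively against $f$ (and not against $\gr f$), so that the $L^\iny$ bound \eqref{WBd} suffices and no Sobolev bound on $W$ is required. This comes down to the structural observation that in the divergence-form equation $Wu$ sits inside the same flux as $A\gr u$, and the construction of the conjugate $\tilde u$ turns fluxes into algebraic multipliers of $\pr{u, \tilde u}$ rather than their derivatives. Once this is in place, the rest of the argument parallels Theorem \ref{localEst} with $V$-contributions and $W$-contributions absorbed into the same $\sqrt{M}$-terms throughout.
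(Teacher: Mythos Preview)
Your outline parallels the paper's strategy in spirit (stream function $\to$ Beltrami system $\to$ similarity principle $\to$ three-quasi-circle), but it omits the decisive step, and as written the $\sqrt{M}$ bound on the Beltrami coefficients does not follow. The equation $\di(A\gr u + Wu) = Vu$ has a source of size $M\abs{u}$, so the flux $A\gr u + Wu$ is not divergence-free and admits no exact stream function. Your phrase ``corrected to accommodate the source $Vu$'' hides the whole difficulty: any local correction you make produces extra terms in the first-order system whose pointwise size is governed by $\norm{V}_{L^\iny} \le M$, not $\sqrt{M}$. With coefficients of size $M$ the similarity principle gives only $\abs{\chi} \le CM$, and the three-quasi-circle argument then yields $\norm{u}_{L^\iny(B_r)} \ge r^{CM}$, which is the wrong exponent.

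What the paper does---both for Theorem~\ref{localEst} and here---is first construct a positive multiplier $\phi$ solving the \emph{adjoint} equation $L\phi - W\cdot\gr\phi - V\phi = 0$ in $B_d$ (via sub/supersolutions, using $V\ge 0$), with $e^{-C\sqrt{M}} \le \phi \le e^{C\sqrt{M}}$. Setting $v = u/\phi$ one finds $\di(\phi^2 A\gr v + \phi^2 W v) = 0$: \emph{now} the flux is exactly divergence-free and the stream function $\tilde v$ is well defined. For $w = \phi^2 v + i\tilde v$ one computes $Dw = \be(w + \bar w)$ with $\be = D(\log\phi) + (\text{bounded})\cdot W$, and the key nonlinear estimate $\norm{\gr\log\phi}_{L^\iny} \le C\sqrt{M}$ (the analogue of Lemma~\ref{LinyBd}) is precisely what converts the $M$-bound on $V$ into a $\sqrt{M}$-bound on the Beltrami coefficient. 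Your observation that $W$ enters the coefficients only at order zero with size $\sqrt{M}$ is correct and is indeed why the magnetic case goes through with no new ideas, but that observation has to sit on top of the multiplier construction, not replace it.
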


As above, the order-of-vanishing estimate implies the following Landis result.

\begin{thm}
Assume that $V: \R^2 \to \R$, $W : \R^2 \to \R^2$ are measurable and satisfy
$$\norm{W}_{L^\iny\pr{\R^2}} \le 1, \;\; \norm{V}_{L^\iny\pr{\R^2}} \le 1.$$
Assume also that $V \ge 0$ a.e. in $\R^2$.
Let $u$ be a real solution to 
\begin{equation}
L u + \gr \cdot \pr{W  u} - V u = 0 \,\,\text{ in }\,\,  \R^2,
\label{EPDEgrW}
\end{equation}
where $A$ satisfies the assumptions \eqref{symm} -- \eqref{gradDec}.
Assume that $\abs{u\pr{z}} \le \exp\pr{c_0 \abs{z}}$ and $u\pr{0} = 1$, where $z = \pr{x,y}$.
Set $z_0 = \pr{x_0, y_0}$.
Then, for any $R$ sufficiently large, estimate \eqref{LandisBd} holds where $C$ depends on $c_0$, $\la$, $\mu$.
\label{LandisgrW}
\end{thm}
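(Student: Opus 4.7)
The plan is to deduce Theorem~\ref{LandisgrW} from Theorem~\ref{localEstgrW} by the shift-and-scale argument of \cite{bk05}, exactly paralleling the deduction of Theorem~\ref{Landis} from Theorem~\ref{localEst}. The only genuinely new ingredient compared to the electric case is the rescaling behavior of the magnetic potential $W$, which must be checked against hypothesis \eqref{WBd}.

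Fix $z_0$ with $\abs{z_0} = R$ for $R$ large and set $\tilde u\pr{z} = u\pr{Rz}$. A chain-rule computation shows that $\tilde u$ satisfies the same type of equation with coefficients $\tilde A\pr{z} = A\pr{Rz}$, $\tilde V\pr{z} = R^2 V\pr{Rz}$, and, crucially, $\tilde W\pr{z} = R\, W\pr{Rz}$. Choosing $M = R^2$, the hypotheses $\norm{V}_\iny, \norm{W}_\iny \le 1$ and $\norm{\gr a_{ij}}_\iny \le \mu$ become $\norm{\tilde V}_\iny \le M$, $\norm{\tilde W}_\iny \le \sqrt{M}$, and $\norm{\gr \tilde a_{ij}}_\iny \le \mu\sqrt{M}$, precisely matching \eqref{VBd}, \eqref{WBd}, and \eqref{aGrBd}. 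The exponential growth $\abs{u\pr{z}} \le \exp\pr{c_0 \abs{z}}$ yields $\norm{\tilde u}_{L^\iny\pr{B_d\pr{p}}} \le \exp\pr{C_0 \sqrt{M}}$ whenever $\abs{p} \le 2$, with $C_0$ depending only on $c_0$ and $d$.

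The next step propagates the normalization $\tilde u\pr{0} = u\pr{0} = 1$ to the point $\tilde z_0 := z_0/R$, which satisfies $\abs{\tilde z_0} = 1$. I take a chain of $N = O\pr{1}$ points $0 = p_0, p_1, \ldots, p_N = \tilde z_0$ with $\abs{p_{i+1} - p_i} < \eta < b/2$ for a fixed $\eta$. Given the inductive lower bound $\norm{\tilde u}_{L^\iny\pr{B_b\pr{p_i}}} \ge \delta_i$, I normalize by $\delta_i$, translate $p_i$ to the origin, and apply Theorem~\ref{localEstgrW} with the parameter $C_0$ enlarged to absorb $\log\pr{1/\delta_i}/\sqrt{M}$. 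Evaluating \eqref{smallBallEst} at $r = b-\eta$ and using $B_{b-\eta}\pr{p_i} \subset B_b\pr{p_{i+1}}$ gives $\delta_{i+1} \ge \delta_i \pr{b-\eta}^{C\sqrt{M}}$. Iterating the resulting linear recurrence in $\log\pr{1/\delta_i}/\sqrt{M}$ over $N = O\pr{1}$ steps yields $\delta_N \ge \exp\pr{-CR}$ for some $C = C\pr{c_0, \la, \mu}$.

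A final application of Theorem~\ref{localEstgrW}, centered at $\tilde z_0$ with the lower bound $\delta_N$ on $B_b\pr{\tilde z_0}$ and evaluated at scale $r = 1/R$, gives
\[
\norm{\tilde u}_{L^\iny\pr{B_{1/R}\pr{\tilde z_0}}} \ge \delta_N \cdot R^{-C'\sqrt{M}} \ge \exp\pr{-C''R\log R}.
\]
Since $B_{1/R}\pr{\tilde z_0}$ in the rescaled variable is precisely $B_1\pr{z_0}$ in the original variable, this is \eqref{LandisBd}. The main technical obstacle is the bookkeeping in the chain step: one must verify that the enlarged parameter $C_0$ remains bounded throughout the iteration (because the constant $C$ in Theorem~\ref{localEstgrW} depends on $C_0$), so that the number of chain steps can be taken independent of $R$ and the compounded multiplicative losses contribute only an $\exp\pr{-CR}$ factor rather than something worse that would spoil the $R\log R$ exponent.
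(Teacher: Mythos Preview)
Your approach is correct, but the paper takes a shorter route that avoids the chain argument entirely. Instead of rescaling about the origin and then propagating the lower bound from $0$ to $\tilde z_0 = z_0/R$ through a chain of balls, the paper rescales \emph{about $z_0$}: it sets $u_R(z) = u(z_0 + aRz)$ with a constant $a \ge 1/b$ depending only on $\la$. Under this map the origin of $\R^2$ corresponds to $z_1 = -z_0/(aR)$, and since $|z_1| = 1/a \le b$ one has $u_R(z_1) = u(0) = 1$, so $\norm{u_R}_{L^\iny(B_b)} \ge 1$ holds immediately. A single application of Theorem~\ref{localEstgrW} with $M = (aR)^2$ and $W_R(z) = aR\,W(z_0 + aRz)$ (so $\norm{W_R}_\iny \le \sqrt{M}$), evaluated at $r = 1/(aR)$, then gives \eqref{LandisBd} directly. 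Your chain argument is the standard Bourgain--Kenig propagation device and works fine here since the number of steps is $O(1)$; the paper's trick simply exploits the freedom in the center of dilation to make the propagation unnecessary, which in turn sidesteps the bookkeeping you flag about the iterated dependence of $C$ on $C_0$.
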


The third pair of theorems apply to equations with magnetic potentials in a non-divergence form.
{For this case, in the local setting, it suffices to work with matrices that have determinant equal to $1$}.  
This additional assumption changes the ellipticity constant, which in turn changes how we define $b$ and $d$.

\begin{thm}
Set $b = \si\pr{1; \la^2}$, $d = \rho\pr{\frac{7}{5}; \la^2} + \frac{2}{5}$.
Let $u$ be a real-valued solution to
\begin{equation}
L u - W \cdot \gr u - V u = 0 \;\; \text{ in } B_d \subset \R^2,
\label{localPDEngW}
\end{equation}
where $V\ge 0$ and $A$ satisfies assumptions \eqref{symm} and \eqref{ellip} with $\la$ replaced by $\la^2$, and $\det A = 1$.
Assume that for some $M \ge 1$, \eqref{uBd} -- \eqref{lowerBd}, and \eqref{aGrBd} from above hold, and
\begin{align}
& \norm{V}_{L^\iny\pr{B_d}} \le C_1 M 
\label{VCBd} \\
& \norm{W}_{L^\iny\pr{B_d}} \le \sqrt{C_1 M}.
\label{WCBd}
\end{align}
Then there exists $C = C\pr{C_0, C_1, \la, \mu}$ such that \eqref{smallBallEst} holds.
\label{localEstngW}
\end{thm}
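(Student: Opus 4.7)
The plan is to follow the pattern of Theorems~\ref{localEst} and \ref{localEstgrW}: eliminate the zeroth-order potential via a positive multiplier, exploit $\det A = 1$ to reduce the resulting first-order equation to a Beltrami--Vekua system in a complex auxiliary function, straighten the Beltrami coefficient by a quasi-conformal homeomorphism, and apply Hadamard's three-quasi-circle theorem.

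\emph{Step 1 (positive multiplier and reduction to a drift-only equation).} Using $V \ge 0$, I would first produce a positive $\phi$ on $B_d$ solving $L\phi - W\cdot\gr\phi - V\phi = 0$ and satisfying $\exp\pr{-C\sqrt M} \le \phi \le \exp\pr{C\sqrt M}$ together with $\norm{\gr\phi/\phi}_\iny \le C\sqrt M$, by solving a Dirichlet problem on a slightly enlarged domain and then iterating a Harnack estimate as in \cite{KSW15}. The quantitative scale $\sqrt M$ matches the hypotheses $\norm{W}_\iny \le \sqrt{C_1 M}$ and $\norm{V}_\iny \le C_1 M$. Setting $w = u/\phi$ and using the equation for $\phi$, a direct computation gives
$$L w + \pr{\frac{2 A \gr \phi}{\phi} - W}\cdot \gr w = 0 \quad \text{in } B_d,$$
so $w$ satisfies a homogeneous first-order equation with drift of size $O\pr{\sqrt M}$ and no potential term.

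\emph{Step 2 (Vekua system and quasi-conformal straightening).} Because $\det A = 1$, the displayed equation is the closedness of the $1$-form $JA\gr w$ (with $J$ the $90^\circ$ rotation), so a conjugate function $\tilde w$ exists with $\gr\tilde w = JA\gr w$. The complex function $f = w + i\tilde w$ then satisfies a Vekua equation
$$\del_{\bar z} f = \nu\,\del_z f + \alpha f + \beta \bar f \quad \text{in } B_d,$$
with $\abs{\nu} \le \pr{1-\la^2}/\pr{1+\la^2}$ coming from ellipticity combined with $\det A = 1$, and $\norm{\alpha}_\iny + \norm{\beta}_\iny = O\pr{\sqrt M}$ absorbing $2A\gr\phi/\phi - W$ together with $\gr a_{ij}$. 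Solving $\del_{\bar z}\chi = \nu\,\del_z\chi$ yields a $K(\la)$-quasi-conformal homeomorphism of the plane; the quasi-balls $Q_s = \chi^{-1}\pr{D_s}$ are the images of Euclidean disks, and the constants $b = \si\pr{1;\la^2}$, $d = \rho\pr{7/5;\la^2} + 2/5$ quantify the inclusions $B_b \su Q_1$ and $Q_{7/5} \Subset B_d$ for $\la^2$-elliptic coefficients.

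\emph{Step 3 (three-quasi-circle and conclusion).} Pushing forward, $g = f\circ\chi^{-1}$ is a generalized analytic function; by a Bers-type factorization $g = e^s h$ with $h$ holomorphic and $\norm{s}_\iny = O\pr{\sqrt M}$, Hadamard's three-quasi-circle estimate from \cite{KSW15} applied to $h$ yields
$$\log \norm{g}_{L^\iny\pr{D_1}} \le \pr{1-\te}\log \norm{g}_{L^\iny\pr{D_r}} + \te \log \norm{g}_{L^\iny\pr{D_{7/5}}} + C\sqrt M,$$
for some $\te = \te(r) \in (0,1)$. Pulling back via $\chi$, multiplying by $\phi$, and inserting \eqref{uBd}--\eqref{lowerBd}, \eqref{aGrBd}, \eqref{VCBd}, \eqref{WCBd} produces $\norm{u}_{L^\iny\pr{B_r}} \ge r^{C\sqrt M}$.

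The main obstacle is Step~1: building $\phi$ with logarithmic size $O\pr{\sqrt M}$ in the presence of the non-divergence drift $W\cdot\gr u$. In Theorem~\ref{localEstgrW} the drift is in divergence form and can be absorbed into a flux without altering the multiplier construction, whereas here the Harnack/Dirichlet argument must be re-run to accommodate a genuine first-order term of size $\sqrt M$; this is precisely why \eqref{VCBd}--\eqref{WCBd} are written with the balance $\norm{V}_\iny + \norm{W}_\iny^2 = O\pr{M}$, and why $\det A = 1$ has been normalized, since it is this normalization that keeps the Beltrami coefficient $\nu$ bounded in terms of $\la^2$ (not $\la$) so that Step~2 proceeds cleanly.
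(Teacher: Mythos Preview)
Your Step~2 contains a genuine gap. After dividing by the positive multiplier $\phi$, the equation for $w = u/\phi$ reads
\[
Lw + \pr{\frac{2A\gr\phi}{\phi} - W}\cdot\gr w = 0,
\]
which is \emph{not} the closedness of $JA\gr w$: closedness of that $1$-form is equivalent to $\di(A\gr w) = 0$, whereas here $\di(A\gr w) = -B\cdot\gr w$ with $B = 2A\gr\psi - W \ne 0$. In Theorem~\ref{localEstgrW} the divergence-form drift $\gr\cdot(Wu)$ permits a genuine divergence-free rewriting $\di(\phi^2 A\gr v + \phi^2 Wv) = 0$ and hence a stream function; the non-divergence drift $W\cdot\gr u$ leaves a residual first-order term that obstructs any such construction (one would need $A^{-1}B$ to be a gradient, which it is not in general). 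Consequently your complex function $f = w + i\tilde w$ is not well-defined, and no Vekua equation of the claimed form follows.

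The paper circumvents this by working one derivative higher. Using $\det A = 1$, Lemma~\ref{decompLem} factors $L = (D + \widetilde W)\widetilde D$ with $\widetilde D = \det(A+I)\,\overline D$, so the equation for $v = u/\phi$ becomes
\[
D\,\widetilde D v = (\widetilde W + \widetilde\Upsilon)\,\widetilde D v,
\]
where $\widetilde\Upsilon$ rewrites the drift $(W - 2A\gr\psi)\cdot\gr v$ as a scalar multiple of $\widetilde D v$. This is a Beltrami--Vekua equation for the \emph{complex derivative} $\widetilde D v$; the similarity principle and the three-quasi-circle theorem then yield a lower bound on $\norm{\gr v}_{L^\iny(Q_s)}$ rather than on $\norm{v}$. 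A closing dichotomy argument---either $u \ge \tfrac12\exp(-2C_1\sqrt M)$ throughout $Q_{6/5}$, or there exist $z_0,z_1\in Q_{6/5}$ with $\abs{v(z_0) - v(z_1)} \gtrsim \exp(-C\sqrt M)$, forcing $\norm{\gr v}_{L^\iny(Q_{6/5})} \gtrsim \exp(-C\sqrt M)$---converts this into the bound on $\norm{u}_{L^\iny(B_r)}$. Your outline misses both the derivative-level reduction and this final step.
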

\begin{rem}
{For the general coefficient matrix $A$ satisfying \eqref{symm} -- \eqref{gradDec}, dividing \eqref{localPDEngW} gives
\[
\di\pr{\frac{A}{\sqrt{\mbox{det}A}}\gr u}-\tilde{W}\cdot\gr u-\tilde{V}u=0,
\]
where
\begin{equation}\label{WWV}
\tilde W=A\gr\left(\frac{1}{\sqrt{\mbox{det}A}}\right)+\frac{W}{\sqrt{\mbox{det}A}},\quad\tilde V=\frac{V}{\sqrt{\mbox{det}A}}.
\end{equation}
If $W$ and $V$ satisfy \eqref{VCBd} and \eqref{WCBd}, then $\tilde W$ and $\tilde V$ satisfy the similar bounds with a new constant $C_1$ depending on $\lambda,\mu$. Also, the ellipticity constant of $A/\sqrt{\mbox{det}A}$ is $\lambda^2$.
}

\end{rem}
Again, the local theorem implies the Landis theorem. 

\begin{thm}
Assume that $V: \R^2 \to \R$, $W : \R^2 \to \R^2$ are measurable and satisfy
$$\norm{W}_{L^\iny\pr{\R^2}} \le 1, \;\; \norm{V}_{L^\iny\pr{\R^2}} \le 1.$$
Assume also that $V \ge 0$ a.e. in $\R^2$.
Let $u$ be a real solution to 
\begin{equation}
L u - W \cdot \gr u - V u = 0 \,\,\text{ in }\,\,  \R^2,
\label{EPDEngW}
\end{equation}
where $A$ satisfies the assumptions \eqref{symm} -- \eqref{gradDec}.
Assume that $\abs{u\pr{z}} \le \exp\pr{c_0 \abs{z}}$ and $u\pr{0} = 1$, where $z = \pr{x,y}$.
Set $z_0 = \pr{x_0, y_0}$.
{Then, for any $R\ge R_0$, estimate \eqref{LandisBd} holds, where $R_0$ depends on $\lambda,\mu$ and $C$ depends on $c_0$, $\la$, $\mu$}.
\label{LandisngW}
\end{thm}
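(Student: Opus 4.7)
The plan is to derive Theorem~\ref{LandisngW} from the local order-of-vanishing estimate of Theorem~\ref{localEstngW} by the shift-and-scale argument of Bourgain--Kenig \cite{bk05}, following exactly the template that (presumably) has been used in the proof of Theorem~\ref{Landis} from Theorem~\ref{localEst}.

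\emph{Step 1: Normalization.} Because Theorem~\ref{localEstngW} is stated for a matrix with $\det A=1$ and ellipticity constant $\la^2$, the first move is the reduction indicated in the remark just after Theorem~\ref{localEstngW}. Dividing \eqref{EPDEngW} by $\sqrt{\det A}$ rewrites it as
\[
\di\!\pr{\frac{A}{\sqrt{\det A}}\,\gr u}-\tilde W\cdot\gr u-\tilde V\,u=0\quad\text{in }\R^2,
\]
with $\tilde W,\tilde V$ given by \eqref{WWV}. Since $\det A\ge \la^2$ and $A$ is Lipschitz with gradient bound $\mu$, the function $1/\sqrt{\det A}$ is Lipschitz with constants controlled by $\la,\mu$, so $\|\tilde W\|_\iny,\|\tilde V\|_\iny\le C_1=C_1(\la,\mu)$. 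The matrix $A/\sqrt{\det A}$ still satisfies \eqref{symm}--\eqref{gradDec}, now with ellipticity $\la^2$ and a new Lipschitz bound $\mu'=\mu'(\la,\mu)$.

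\emph{Step 2: Shift and scale.} Fix $z_0$ with $|z_0|=R$ and choose $\rho>R/b$ arbitrarily close to $R/b$, where $b=\si(1;\la^2)$ is the constant appearing in Theorem~\ref{localEstngW}. Set $\tilde u(z)=u(z_0+\rho z)$ for $z\in B_d$. A direct computation (the divergence term acquires a factor $\rho^{-2}$, the drift term a factor $\rho^{-1}$) shows that after multiplying through by $\rho^2$, $\tilde u$ solves
\[
\di\!\pr{\tilde A\,\gr\tilde u}-\rho\,\tilde W(z_0+\rho z)\cdot\gr\tilde u-\rho^2\,\tilde V(z_0+\rho z)\,\tilde u=0\quad\text{in }B_d,
\]
where $\tilde A(z)=A(z_0+\rho z)/\sqrt{\det A(z_0+\rho z)}$. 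Put $M=\rho^2$. Then $\|\rho^2\tilde V(z_0+\rho\,\cdot)\|_\iny\le C_1\rho^2=C_1M$, giving \eqref{VCBd}, and $\|\rho\tilde W(z_0+\rho\,\cdot)\|_\iny\le C_1\rho\le \sqrt{C_1'M}$ for a suitably enlarged $C_1'=C_1'(\la,\mu)$, giving \eqref{WCBd}. The bound $\|\gr\tilde a_{ij}\|_\iny\le\mu'\rho\le\mu\sqrt M$ (after absorbing the ratio $\mu'/\mu$ into the constant) yields \eqref{aGrBd}. The point $z^\ast=-z_0/\rho$ lies in $B_b$ (since $|z^\ast|=R/\rho<b$) and $\tilde u(z^\ast)=u(0)=1$, so \eqref{lowerBd} holds. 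Finally, for $z\in B_d$, $|z_0+\rho z|\le R+\rho d\le (b+d)\rho=(b+d)\sqrt M$, so $|u(w)|\le\exp(c_0|w|)$ yields \eqref{uBd} with $C_0=c_0(b+d)=C_0(c_0,\la)$.

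\emph{Step 3: Conclusion.} Theorem~\ref{localEstngW} now produces $C=C(C_0,C_1',\la,\mu)=C(c_0,\la,\mu)$ with $\|\tilde u\|_{L^\iny(B_r)}\ge r^{C\sqrt M}=r^{C\rho}$ for $r$ small. Take $r=1/\rho=b/R$, so that $B_{\rho r}(z_0)=B_1(z_0)$ in the original variables. Undoing the rescaling,
\[
\|u\|_{L^\iny(B_1(z_0))}=\|\tilde u\|_{L^\iny(B_r)}\ge\pr{b/R}^{CR/b}\ge\exp(-C'R\log R)
\]
for all $R\ge R_0(\la,\mu)$ (the lower bound on $R$ comes from needing $M=\rho^2\ge1$ and $r=b/R$ in the allowable range of the local theorem). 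The estimate is uniform in $z_0$ on $\{|z_0|=R\}$, which is \eqref{LandisBd}.

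\emph{Main obstacle.} The crux is balancing \eqref{lowerBd} and \eqref{uBd} after rescaling: \eqref{lowerBd} forces $\rho\gtrsim R$ so that the origin, the only point with a known lower bound $|u(0)|\ge 1$, falls inside $B_{b\rho}(z_0)$, whereas \eqref{uBd} forces $\rho\lesssim R$ so that $c_0(R+\rho d)\lesssim\sqrt M=\rho$. These constraints pin down $\sqrt M\sim R$, and inserting $r\sim 1/R$ into $r^{C\sqrt M}$ is precisely what manufactures the $\exp(-CR\log R)$ rate; no better rate is attainable by this scheme. Beyond this balancing, the only subtlety is checking that the remark-based normalization preserves the bounds with constants depending only on $\la$ and $\mu$, which is a direct calculation using the lower bound $\det A\ge \la^2$ together with $\|\gr a_{ij}\|_\iny\le\mu$.
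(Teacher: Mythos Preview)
Your proposal is correct and follows essentially the same approach as the paper: first normalize $A$ to have determinant $1$ via the remark after Theorem~\ref{localEstngW}, then apply the Bourgain--Kenig shift-and-scale argument exactly as in the proofs of Theorems~\ref{Landis} and~\ref{LandisgrW}. The paper's own proof is in fact extremely terse---it carries out the normalization and then simply says ``The rest of the proof proceeds as above''---so you have supplied the details the paper omits. One cosmetic slip: you write ``$r=1/\rho=b/R$'' after having chosen $\rho>R/b$, so strictly $1/\rho<b/R$; this does not affect the argument, since either choice yields the $\exp(-CR\log R)$ bound after absorbing constants.
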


This article is organized as follows.
In Section \ref{quasi}, we discuss fundamental solutions of second-order elliptic operators that satisfy \eqref{ellip}.
These results apply to second-order elliptic operators with $L^\iny$ coefficients.
These fundamental solutions lead to the definitions of quasi-balls and quasi-circles, as well as related results.
In Section \ref{SandS}, the shift and scale argument from \cite{bk05} is applied to show how each quantitative Landis theorem follows from the corresponding order-of-vanishing estimate.
A number of useful tools are developed in Section \ref{tools}.
To start, we introduce some first-order Beltrami operators that generalize $\ol \del$.
Then, a few properties that relate first-order Beltrami operators to second-order elliptic operators are established.
With these facts, a Hadamard three-quasi-circle theorem is proved.
Finally, we present some of the work of Bojarksi from \cite{Boj09} including a similarity principle for solutions to non-homogenous Beltrami equations.
In Section \ref{localEstProof}, the tools developed in the previous section are combined with the framework from \cite{KSW15} to prove Theorem \ref{localEst}.
Section \ref{localEstgrWProof} shows how to account for a magnetic potential, proving Theorem \ref{localEstgrW}.
The proof of Theorem \ref{localEstngW} is contained in Section \ref{localEstngWProof}.
A technical proof of one of the facts from Section \ref{tools} may be found in the appendix.

\section{Quasi-balls and quasi-circles}
\label{quasi}

Let $\mathcal{L}\pr{\la}$ denote the set of all second-order elliptic operators acting on $\R^2$ that satisfy ellipticity condition \eqref{ellip}.
Throughout this section, assume that $L \in \mathcal{L}\pr{\la}$.
We start by discussing the fundamental solutions of $L$.
These results are based on the Appendix of \cite{KN85}.

\begin{defn}
A function $G$ is called a fundamental solution for $L$ with pole at the origin if
\begin{itemize}
\item $G \in H^{1,2}_{loc}\pr{\R^2 \setminus 0}$, $G \in H^{1,p}_{loc}\pr{\R^2}$ for all $p < 2$ and for every $\vp \in C^\iny_0\pr{\R^2}$
$$\int a_{ij}\pr{z} D_i G\pr{z} \,  D_j \vp\pr{z} dz = - \vp\pr{0}.$$
\item $\abs{G\pr{z} } \le C \log \abs{z}$, for some $C > 0$, {$|z|\ge C$}.
\end{itemize}
\end{defn}

\begin{lem}[Theorem A-2, \cite{KN85}]
There exists a unique fundamental solution $G$ for $L$, with pole at the origin and with the property that $\disp \lim_{\abs{z} \to \iny} G\pr{z} - g\pr{z} = 0$, where $g$ is a solution to $L g = 0$ in $\abs{z} > 1$ with $g = 0$ on $\abs{z} = 1$.
Moreover, there are constants $C_1, C_2, C_3, C_4, R_1 < 1, R_2 > 1$, that depend on $\la$, such that
\begin{align*}
&C_1 \log\pr{\frac{1}{\abs{z}}} \le - G\pr{z} \le C_2 \log \pr{\frac{1}{\abs{z}}} \;\; \text{ for } \abs{z} < R_1 \\
& C_3 \log\abs{z} \le G\pr{z} \le C_4 \log \abs{z} \;\; \text{ for } \abs{z} > R_2.
\end{align*}
\label{fundSolBds}
\end{lem}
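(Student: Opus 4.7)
The plan is to construct $G$ by an exhaustion procedure and then read off the two-sided logarithmic bounds via Harnack and comparison arguments. For each large $R>1$, let $G_R$ be the Dirichlet Green's function of $L$ on $B_R$; its existence, membership in $H^{1,p}_{loc}$ for $p<2$, and a uniform upper bound $\abs{G_R\pr{z}}\le C\pr{\la}\pr{1+\abs{\log\abs{z}}}$ on compact subsets of $\R^2\setminus\set{0}$ independent of $R$ are classical (Littman-Stampacchia-Weinberger; Gr\"uter-Widman). Combined with De Giorgi-Nash-Moser H\"older estimates away from the origin, Arzel\`a-Ascoli produces a subsequence $G_{R_k}+c_{R_k}\to G$ locally uniformly on $\R^2\setminus\set{0}$, where the additive constants $c_{R_k}$ are reserved for matching the asymptotic normalization. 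Passing the defining integral identity to the limit shows that $G$ is a distributional fundamental solution.

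Next I would establish the two-sided logarithmic bounds. Near the origin, $-G$ is positive and $L$-harmonic on a small punctured disk, with a logarithmic singularity at $0$ inherited from the fundamental-solution identity; applying Harnack's inequality on the dyadic annuli $\set{2^{-k-1}<\abs{z}<2^{-k}}$ gives $\sup\le C\pr{\la}\inf$ on each annulus together with a definite multiplicative gain between consecutive annuli, yielding $C_1\log\pr{1/\abs{z}}\le -G\pr{z}\le C_2\log\pr{1/\abs{z}}$ for $\abs{z}<R_1$. At infinity I would construct $g$ by solving $Lg_k=0$ on each annulus $\set{1<\abs{z}<R_k}$ with $g_k=0$ on the inner circle and a logarithmically growing Dirichlet datum on the outer circle, then passing to a limit; Harnack chains along an exterior dyadic annular sequence, combined with comparison against multiples of $\log\abs{z}$ (which serves as an approximate sub/supersolution with errors controlled by \eqref{ellip}), yield $C_3\log\abs{z}\le g\pr{z}\le C_4\log\abs{z}$. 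Choosing $c_{R_k}$ in the first step so that $G-g$ is bounded and $L$-harmonic on $\set{\abs{z}>1}$ with boundary values tending to $0$, a maximum-principle argument forces $G-g\to 0$ at infinity; uniqueness follows because any two admissible fundamental solutions differ by a bounded entire $L$-harmonic function vanishing at infinity.

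The hardest step is the construction and sharp two-sided bound for the exterior solution $g$ together with the coordination of additive normalizations that yields $G-g\to 0$. In two dimensions, positive $L$-harmonic functions in an exterior domain vanishing on the inner boundary need not have a canonical growth rate, so pinning down the $\log\abs{z}$ rate with constants depending only on $\la$ requires carefully chosen barriers: one must verify that appropriate multiples of $\log\abs{z}$ serve as sub- and supersolutions using only the ellipticity bounds \eqref{ellip}, without relying on any regularity of $A$. This is the delicate content of the appendix in \cite{KN85}.
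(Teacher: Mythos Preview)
The paper does not prove this lemma at all: it is quoted verbatim as Theorem~A-2 of \cite{KN85} and used as a black box, so there is no ``paper's own proof'' to compare against. Your sketch is therefore not competing with anything in the present paper; it is an outline of what the cited appendix in \cite{KN85} accomplishes.

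As a sketch, your outline is reasonable and hits the expected ingredients (exhaustion by Green's functions, Harnack on dyadic annuli, normalization by additive constants). One point deserves a caution: you propose to obtain the exterior bounds by using multiples of $\log\abs{z}$ as barriers ``using only the ellipticity bounds \eqref{ellip}.'' For a general divergence-form operator with merely bounded measurable coefficients, $\log\abs{z}$ is \emph{not} an $L$-sub- or supersolution, and there is no pointwise inequality of the form $L(\log\abs{z})\ge 0$ or $\le 0$ to exploit. The actual argument in \cite{KN85} does not proceed via such barriers; the two-sided bounds come instead from iterating Harnack's inequality across dyadic annuli (both inward and outward) together with the normalization, which is what forces the growth/decay rate to be comparable to $\log\abs{z}$ with constants depending only on $\la$. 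You do invoke Harnack chains as well, so the gap is more one of emphasis than of a wrong idea, but the barrier sentence as written would not go through and should be dropped.
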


As a corollary to this theorem, we have the following.

\begin{cor}
There exist additional constants $C_5, C_6$, depending on $\la$, such that
\begin{align*}
& \abs{z}^{C_2} \le \exp\pr{G\pr{z}} \le \abs{z}^{C_1}  \;\; \text{ for } \abs{z} < R_1 \\
& C_5 \abs{z}^{C_2} \le \exp\pr{G\pr{z}} \le C_6 \abs{z}^{C_4} \;\; \text{ for } R_1 < \abs{z} < R_2 \\
& \abs{z}^{C_3} \le \exp\pr{G\pr{z}} \le \abs{z}^{C_4} \;\; \text{ for } \abs{z} > R_2.
\end{align*}
\label{expBdsFund}
\end{cor}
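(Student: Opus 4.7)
The plan is to obtain the three estimates region by region. For the inner region $\abs{z}<R_1$ and the outer region $\abs{z}>R_2$, the bounds are direct consequences of exponentiating Lemma \ref{fundSolBds}, with a sign-flip in the inner region because $\log\abs{z}<0$ there. Concretely, the inner estimate $C_1\log\pr{1/\abs{z}}\le -G\pr{z}\le C_2\log\pr{1/\abs{z}}$ rearranges (since $C_1\le C_2$ and the $\log$ is negative, the inequality flips twice) to $C_2\log\abs{z}\le G\pr{z}\le C_1\log\abs{z}$, and exponentiation yields the first line. In the outer region, one exponentiates $C_3\log\abs{z}\le G\pr{z}\le C_4\log\abs{z}$ directly to obtain the third line.

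For the middle annulus $R_1\le\abs{z}\le R_2$, I would argue by compactness. Since $G\in H^{1,2}_{loc}\pr{\R^2\setminus\set{0}}$ solves the homogeneous equation $LG=0$ on the open annulus $R_1<\abs{z}<R_2$ and $L$ is uniformly elliptic with $L^\iny$ coefficients, De Giorgi--Nash--Moser regularity ensures that $G$ is continuous on the closed annulus, hence attains a finite minimum $m$ and maximum $M$ there (finiteness at the boundary spheres follows from continuity together with the explicit logarithmic bounds of Lemma \ref{fundSolBds} evaluated at $\abs{z}=R_1$ and $\abs{z}=R_2$). The functions $\abs{z}^{C_2}$ and $\abs{z}^{C_4}$ are themselves bounded between explicit positive constants on this compact annulus. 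Setting
\[
C_5 = \frac{e^m}{\max_{R_1\le\abs{z}\le R_2} \abs{z}^{C_2}}, \qquad C_6 = \frac{e^M}{\min_{R_1\le\abs{z}\le R_2} \abs{z}^{C_4}},
\]
which depend only on $\la$ through $C_1,\dots,C_4,R_1,R_2$, produces the middle line.

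This corollary is essentially a mechanical reformatting of Lemma \ref{fundSolBds} into a multiplicative form, and I do not anticipate any genuine obstacle. All of the real analysis — the construction and uniqueness of $G$ and its two-sided logarithmic bounds — is already housed in the preceding Lemma; the corollary merely translates its additive logarithmic estimates into the polynomial form that will be used to define the quasi-balls and quasi-circles in the remainder of Section \ref{quasi}.
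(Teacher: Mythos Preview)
Your argument for the inner and outer regions matches the paper exactly. For the middle annulus, however, there is a gap in the dependence of the constants. Your $m$ and $M$ are defined as the minimum and maximum of $G$ over the closed annulus, and continuity alone only tells you these are finite for the particular operator $L$ at hand; it does not show they are controlled solely by $\la$. Yet the statement asserts $C_5,C_6$ depend only on $\la$, and this uniformity over all $L\in\mathcal{L}\pr{\la}$ is exactly what is needed later so that $\si\pr{s;\la}>0$ and $\rho\pr{s;\la}<\iny$ in Definition \eqref{2.1}--\eqref{2.2}. Your sentence ``which depend only on $\la$ through $C_1,\dots,C_4,R_1,R_2$'' is therefore not justified by the compactness argument you give.

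The paper closes this gap with the maximum principle rather than compactness: since $G$ solves $LG=0$ on the annulus, its extrema on $\ol{A}$ are attained on $\del A$, and the boundary values are already controlled by Lemma \ref{fundSolBds}. This yields the explicit two-sided bound $C_2\log R_1\le G\pr{z}\le C_4\log R_2$ on the whole annulus, with constants depending only on $\la$, from which $C_5=\pr{R_1/R_2}^{C_2}$ and $C_6=\pr{R_2/R_1}^{C_4}$ follow directly. Replacing your De Giorgi--Nash--Moser continuity step with the maximum principle fixes the argument with no extra work.
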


\begin{proof}
Exponentiating the bounds given in Theorem \ref{fundSolBds} gives the first and third line of inequalities.
Since $G$ is a solution to $L u = 0$ in the annulus $A = \set{z :  R_1 < \abs{z} < R_2}$, then by the maximum principle and the bounds given in {Lemma} \ref{fundSolBds}
\begin{align*}
& \max_{z \in A} G\pr{z} \le \max_{z \in \del A} G\pr{z} \le \max\set{C_4 \log R_2, C_1 \log R_1} = C_4 \log R_2 \\
& \min_{z \in A} G\pr{z} \ge  \min_{z \in \del A} G\pr{z} \ge \min\set{C_3 \log R_2, C_2 \log R_1} = C_2 \log R_1.
\end{align*}
It follows that for any $z \in A$,
\begin{align*}
& C_2 \log R_1 \le G\pr{z} \le C_4 \log R_2.
\end{align*}
Therefore, whenever $R_1 < \abs{z} < R_2$,
\begin{align*}
\exp\pr{G\pr{z}} \le R_2^{C_4} =  \pr{\frac{R_2}{\abs{z}}}^{C_4} \abs{z}^{C_4} \le   \pr{\frac{R_2}{R_1}}^{C_4} \abs{z}^{C_4},
\end{align*}
and
\begin{align*}
\exp\pr{G\pr{z}} \ge R_1^{C_2} = \pr{\frac{ R_1}{\abs{z}}}^{C_2} \abs{z}^{C_2} \ge \pr{\frac{ R_1}{R_2}}^{C_2} \abs{z}^{C_2},
\end{align*}
giving the second line of bounds.
\end{proof}

The level sets of $G$ will be important to us.

\begin{defn}
Define a function $\ell: \R^2 \to \pr{0, \iny}$ as follows: $\ell\pr{z} = s$ iff $G\pr{z} = \ln s$.
Then set 
\begin{align*}
Z_s &= \set{ z \in \R^2 : G\pr{z} = \ln s} 
= \set{ z \in \R^2 : \ell\pr{z} = s} .
\end{align*}
We refer to these level set of $G$ as {\bf quasi-circles.}
That is, $Z_s$ is the quasi-circle of radius $s$.
We also define (closed) {\bf quasi-balls} as
\begin{align*}
Q_s &= \set{ z \in \R^2 : \ell\pr{z} \le s} .
\end{align*}
Open {\bf quasi-balls} are defined analogously.
We may also use the notation $Q_s^L$ and $Z_s^L$ to remind ourselves of the underlying operator.
\end{defn}

The following lemma follows from the bounds given in Corollary \ref{expBdsFund}.

\begin{lem}
There are constants $c_1, c_2, c_3, c_4, c_5, c_6, S_1 < 1, S_2 > 1$, that depend on $\la$, such that if $z \in Z_s$, then
\begin{align*}
& s^{c_1} \le \abs{z} \le s^{c_2}  \;\; \text{ for } s \le S_1 \\
& c_5 s^{c_1} \le \abs{z} \le c_6 s^{c_4} \;\; \text{ for } S_1 < s < S_2 \\
& s^{c_3} \le \abs{z} \le s^{c_4} \;\; \text{ for } s \ge S_2.
\end{align*}
\label{ZsBounds}
\end{lem}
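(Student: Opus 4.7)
The plan is to exploit Corollary~\ref{expBdsFund} directly: since $z\in Z_s$ means $\exp\pr{G\pr{z}}=s$, each of the three two-sided bounds in the corollary becomes, after inversion, a pair of $\abs{z}$-versus-$s$ bounds.

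First I would choose thresholds $S_1<1<S_2$ that force a point $z\in Z_s$ into the expected region. Using the lower bounds in the middle and outer regimes of Corollary~\ref{expBdsFund}, any $z$ with $\abs{z}\ge R_1$ satisfies
\[
\exp\pr{G\pr{z}}\ge \min\set{C_5 R_1^{C_2},\, R_2^{C_3}},
\]
so taking $S_1$ to be at most this minimum ensures that $s\le S_1$ forces $\abs{z}<R_1$. An analogous choice of $S_2$ from the inner and middle upper bounds guarantees that $s\ge S_2$ forces $\abs{z}>R_2$.

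Next, in the inner regime I would invert $\abs{z}^{C_2}\le s\le \abs{z}^{C_1}$ (with $\abs{z}<R_1<1$ and positive exponents, so the map $t\mapsto t^{1/C_j}$ is monotone) to obtain $s^{1/C_1}\le\abs{z}\le s^{1/C_2}$, giving the first line with $c_1=1/C_1$ and $c_2=1/C_2$. The same inversion applied to $\abs{z}^{C_3}\le s\le\abs{z}^{C_4}$ in the outer regime yields the third line with $c_3=1/C_4$ and $c_4=1/C_3$.

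It remains to handle the transition range $S_1<s<S_2$. Here the corollary, combined with the two extreme cases already established, bounds $\abs{z}$ uniformly above and below by positive constants depending only on $\la$. Since $s$ also varies in a compact subinterval of $\pr{0,\iny}$, the quantities $s^{c_1}$ and $s^{c_4}$ are pinched between positive constants, so one can select $c_5,c_6>0$ small/large enough that $c_5 s^{c_1}\le\abs{z}\le c_6 s^{c_4}$ holds throughout this regime, reusing the exponents $c_1,c_4$ from the earlier cases. The only mildly non-routine point is the constant-chasing in the first step, where one must verify that $S_1,S_2$ truly isolate single regimes; beyond that, the proof reduces to inverting monotone power functions.
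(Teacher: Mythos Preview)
Your proposal is correct and is exactly the approach the paper intends: the paper does not spell out a proof but simply states that the lemma ``follows from the bounds given in Corollary~\ref{expBdsFund},'' and your argument---choosing $S_1,S_2$ so that the corresponding regime of Corollary~\ref{expBdsFund} applies, then inverting the monotone power bounds, and handling the transition range by compactness---is precisely the intended routine computation. Your identification of the exponents $c_1=1/C_1$, $c_2=1/C_2$, $c_3=1/C_4$, $c_4=1/C_3$ is correct, and the constant-chasing for $S_1,S_2,c_5,c_6$ is unproblematic.
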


Thus, the quasi-circle $Z_s$ is contained in an annulus whose inner and outer radii depend on $s$ and $\la$.
For future reference, it will be helpful to have a notation for the bounds on these inner and outer radii.

\begin{defn}
Define
\begin{align}
& \si\pr{s; \la} = \sup\set{ r > 0 : B_r \su \bigcap_{L \in \mathcal{L}\pr{\la}} Q_s^L }\label{2.1}\\
& \rho\pr{s; \la} = \inf \set{r > 0 : \bigcup_{L \in \mathcal{L}\pr{\la}} Q_s^L \su B_r }.\label{2.2}
\end{align}
\end{defn}

\begin{rem}
These functions are defined so that for any operator $L$ in $\mathcal{L}\pr{\la}$, $B_{\si\pr{s; \la}} \su Q^L_s \su B_{\rho\pr{s;\la}}$.
\end{rem}

The quasi-balls and quasi-circles just defined above are centered at the origin since $G$ is a fundamental solution with a pole at the origin.
We may sometimes use the notation $Z_s\pr{0}$ and $Q_s\pr{0}$ as a reminder that these sets are centered around the origin.
If we follow the same process for any point $z_0 \in \R^2$, we may discuss the fundamental solutions with pole at $z_0$, and we may similarly define the quasi-circles and quasi-balls associated to these functions.
We will denote the quasi-circle and quasi-ball of radius $s$ centred at $z_0$ by $Z_s\pr{z_0}$ and $Q_s\pr{z_0}$, respectively.
Although $Q_s\pr{z_0}$ is not necessarily a translation of $Q_s\pr{0}$ for $z_0 \ne 0$, both sets are contained in annuli that are translations.

Throughout, we will often work with quasi-balls in addition to standard balls.

\section{The shift and scale arguments}
\label{SandS}

The bulk of the paper is devoted to proving the order-of-vanishing estimates stated in Theorems \ref{localEst}, \ref{localEstgrW}, and \ref{localEstngW}.
Before we get to those details, we show how Theorems \ref{Landis}, \ref{LandisgrW}, and \ref{LandisngW} follow from the local estimates and the shift and scale arguments in \cite{bk05}.

\begin{proof}[Proof of Theorem \ref{Landis}]
Let $u$ be a real-valued solution to \eqref{EPDE}.
Let $z_0 \in \R^2$ be such that $\abs{z_0} = R$ for some $R \ge 1$.
For a constant $a$ yet to be determined, define
$$u_R\pr{z} = u\pr{z_0 + a R z}, \quad A_R\pr{z} = A\pr{z_0 + a R z}, \quad V_R\pr{z} = \pr{a R}^2 V\pr{z_0 + a R z},$$
and set
$$L_R = \di \pr{A_R \gr}.$$
Since $A$ satisfies \eqref{symm} and \eqref{ellip}, then so too does $A_R$.
By construction, $u_R$ is a solution to
$$L_R u_R - V_R u_R = 0.$$
Since $\abs{u\pr{z}} \le \exp\pr{c_0 \abs{z}}$, it follows that
$$\norm{u_R}_{L^\iny\pr{B_d}} \le \exp\pr{c_0\pr{1 + a d} R},$$
where $d = \rho\pr{\frac{7}{5}; \la} + \frac{2}{5}$ depends on $\la$.
We choose $a > 0$ so that $\frac{1}{a} \le b$, where $b = \si\pr{1;\la}$ depends on $\la$.
Then $z_1 := - \frac{z_0}{a R} \in B_b$, $u_R\pr{z_1} = u\pr{0} = 1$ and it follows that
$$\norm{u_R}_{L^\iny\pr{B_b}} \ge 1.$$
Since $\norm{V}_{L^\iny} \le 1$, then $\norm{V_R}_{L^\iny\pr{B_d}} \le \pr{a R}^2$.
The condition $\norm{\gr a_{ij}}_{L^\iny} \le \mu$ implies that $\norm{\gr a_{R, ij}}_{L^\iny\pr{B_d}} \le a R \mu$.
Hence, the assumptions of Theorem \ref{localEst} are satisfied for $u_R$ with $M = \pr{a R}^2$.
Therefore,
\begin{equation*}
\norm{u_R}_{L^\iny\pr{B_r}} \ge r^{ C a R}.
\end{equation*}
Setting $r = \frac{1}{aR}$ and rewriting in terms of $u$, we see that
$$\norm{u}_{L^\iny\pr{B_1\pr{z_0}}} \ge \exp\pr{- \tilde C R \log R},$$
as required.
\end{proof}

\begin{proof}[Proof of Theorem \ref{LandisgrW}]
Let $u$ be a real-valued solution to \eqref{EPDEgrW}.
Define $z_0$, $a$, $u_R$, $A_R$, $V_R$, and $L_R$ as in the previous proof.
If we set
$$W_R\pr{z} = R \, W\pr{z_0 + a R z},$$
then $u_R$ is a solution to
$$L_R u_R + \gr\pr{W_R \, u_R} - V_R u_R = 0.$$
Since $\norm{W}_{L^\iny} \le 1$, then $\norm{W_R}_{L^\iny\pr{B_d}} \le aR$.
The assumptions of Theorem \ref{localEstgrW} are satisfied for $u_R$ with $M = \pr{a R}^2$, and the conclusion follows as above.
\end{proof}

To prove the third version of the theorem, we must account for the additional determinant condition in the statement of Theorem \ref{localEstngW}.

\begin{proof}[Proof of Theorem \ref{LandisngW}]
Let $u$ be a real-valued solution to \eqref{EPDEngW}.
Set $\tilde A = \frac{A}{\sqrt{\det A}}$ so that $\det \tilde A = 1$. 
Now the ellipticity constant of $\tilde A$ is $\lambda^2$.
Then $u$ is a solution to $\tilde L u - \tilde W \cdot \gr u - \tilde V u = 0$ in $\R^2$, where $\tilde L = \di \pr{\tilde A \gr}$ and $\tilde W$, $\tilde V$ are given in \eqref{WWV}. Note that $\norm{\tilde W}_{L^\iny} \le C_1$, and $\norm{\tilde V}_{L^\iny} \le \la^{-1}$, with $C_1 = C_1\pr{\la, \mu}$.
The rest of the proof proceeds as above.
\end{proof}
%
%

\section{Useful Tools}
\label{tools}

This section contains a number of tools that will be used in the proofs of the order-of-vanishing estimates to be given in the following sections.
We first define the Beltrami operator that will play the role of $\bar \del$ from \cite{KSW15}.
Then we present some results that show that such Beltrami operators {are} related to elliptic operators of the form $L$ in the same way that $\ol \del$ related to $\LP$.
These results are proved with elementary (but somewhat lengthly) computations.
Once we have the computational results, we will prove an optimal three-balls inequality, which we call the Hadamard three-quasi-ball inequality.
Finally, we present some work of Bojarski from \cite{Boj09}, including the similarity principle for equations of the form $D u = a u + b \bar u$.

\subsection{The Beltrami operators}

We define a Beltrami operator that will play the role of the $\bar \del$ operator from the original paper \cite{KSW15}.
For a complex-valued function $f = u + i v$, define 
\begin{align}
D f &=  \bar\del f + \eta\pr{z} \del f + \nu\pr{z} \ol{\del f} ,
\label{DDef} 
\end{align}
where 
\begin{align}
& \bar \del = \tfrac{1}{2} \pr{\del_x + i \del_y} \nonumber \\
&  \del = \tfrac{1}{2} \pr{\del_x - i \del_y} \nonumber \\
& \eta\pr{z} = \frac{a_{11} - a_{22} + 2 i a_{12} }{\det\pr{A + I}} 
\label{etaDef} \\
& \nu\pr{z} = \frac{\det A - 1}{\det\pr{A + I}} 
\label{nuDef}.
\end{align}

\begin{lem}
For $\eta, \nu$ defined above, we have
$$\abs{\eta\pr{z}} + \abs{\nu\pr{z}} \le \frac{1-\la}{1+\la}.$$
\label{etanuBds}
\end{lem}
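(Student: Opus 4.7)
The plan is to diagonalize $A$ at the point $z$ and rewrite everything in terms of its two eigenvalues. Since $A(z)$ is real symmetric and elliptic, it has eigenvalues $\mu_1, \mu_2$ with $\la \le \mu_i \le \la^{-1}$. Write $t = \tr A = \mu_1 + \mu_2$ and $d = \det A = \mu_1 \mu_2$. The denominator simplifies as
\[
\det(A+I) = (a_{11}+1)(a_{22}+1) - a_{12}^2 = d + t + 1 = (1+\mu_1)(1+\mu_2),
\]
and the numerator of $\eta$ satisfies
\[
\abs{a_{11}-a_{22}+2ia_{12}}^2 = (a_{11}-a_{22})^2 + 4 a_{12}^2 = t^2 - 4d = (\mu_1-\mu_2)^2.
\]
Therefore
\[
\abs{\eta(z)} + \abs{\nu(z)} = \frac{\abs{\mu_1-\mu_2} + \abs{\mu_1\mu_2 - 1}}{(1+\mu_1)(1+\mu_2)}.
\]

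The main idea is that the numerator factors nicely once we decide the sign of $\mu_1\mu_2 - 1$. Assume WLOG that $\mu_1 \ge \mu_2$. In the case $d \ge 1$, the numerator equals $(\mu_1-\mu_2) + (\mu_1\mu_2 - 1) = \mu_1(1+\mu_2) - (1+\mu_2) = (\mu_1-1)(1+\mu_2)$, and after cancellation
\[
\abs{\eta(z)} + \abs{\nu(z)} = \frac{\mu_1 - 1}{\mu_1 + 1}.
\]
In the case $d < 1$, the numerator equals $(\mu_1-\mu_2) + (1 - \mu_1\mu_2) = (1+\mu_1)(1-\mu_2)$, so
\[
\abs{\eta(z)} + \abs{\nu(z)} = \frac{1 - \mu_2}{1 + \mu_2}.
\]

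To conclude, observe that the map $x \mapsto \frac{x-1}{x+1}$ is increasing on $(0,\infty)$, so $\frac{\mu_1-1}{\mu_1+1} \le \frac{\la^{-1}-1}{\la^{-1}+1} = \frac{1-\la}{1+\la}$, while $\frac{1-\mu_2}{1+\mu_2} \le \frac{1-\la}{1+\la}$ since $\mu_2 \ge \la$. In both cases the claimed bound follows.

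No step looks like a real obstacle; the whole computation is algebraic. The only small care needed is to check that $\det(A+I) > 0$ so that the absolute values of $\eta$ and $\nu$ are genuinely the ratios written above — but this is immediate from $\mu_i > 0$ — and to notice the lucky factorizations that make the two cases collapse to a single quantity that is monotone in one eigenvalue only.
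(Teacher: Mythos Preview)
Your proof is correct and follows essentially the same approach as the paper: both reduce $\abs{\eta}$ and $\abs{\nu}$ to expressions in the eigenvalues of $A$ via the identities $\det(A+I)=(1+\mu_1)(1+\mu_2)$ and $(a_{11}-a_{22})^2+4a_{12}^2=(\mu_1-\mu_2)^2$. Your case split and factorization actually supply the details of the final inequality that the paper leaves implicit.
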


\begin{proof}
The proof of this lemma is purely computation.
\begin{align*}
& \abs{\eta\pr{z}}^2 
= \frac{\pr{a_{11} - a_{22}}^2 + 4 a_{12}^2 }{\brac{\det\pr{A+I}}^2}
= \frac{\pr{a_{11} + a_{22}}^2 - 4a_{11}a_{22} + 4 a_{12}^2 }{\brac{\det\pr{A+I}}^2}
= \frac{\pr{\tr A}^2 - 4 \det A}{\pr{\det A + \tr A + 1}^2} \\
& \abs{\eta\pr{z}} 
= \frac{\la_1 - \la_2 }{\pr{\la_1 +1}\pr{ \la_2  + 1}} \\
& \abs{\nu\pr{z}}
= \frac{\abs{\det A -1} }{\brac{\det\pr{A+I}}} 
= \frac{\abs{\la_1 \la_2 - 1}}{\pr{\la_1 +1}\pr{ \la_2  + 1}},
\end{align*}
where we are using $\la_1\ge\la_2$ to denote the eigenvalues of $A$.
It follows that
\begin{align*}
\abs{\eta\pr{z}} + \abs{\nu\pr{z}}
&= \frac{\la_1 - \la_2 }{\pr{\la_1 +1}\pr{ \la_2  + 1}} + \frac{\abs{\la_1 \la_2 - 1}}{\pr{\la_1 +1}\pr{ \la_2  + 1}} 
\le \frac{1-\la}{1+\la}.
\end{align*}
\end{proof}

A computation shows that for $f = u + iv$
\begin{align}
D f 
&= \frac{\pr{a_{11}  + \det A} + i a_{12}}{\det\pr{A + I}} u_x 
+ \frac{  a_{12} + i \pr{a_{22} + \det A}}{\det\pr{A + I}}  u_y 
\nonumber \\
&+ \frac{ \pr{a_{11} + 1} + i a_{12} }{\det\pr{A + I}} i v_x
+ \frac{a_{12} + i \pr{a_{22} + 1 } }{\det\pr{A + I}} i v_y .
\label{DExpr}
\end{align}
When $A$ has determinant equal to $1$, $\nu\pr{z} = 0$ and we may write
\begin{align}
D = \frac{\pr{a_{11}  + 1} + i a_{12}}{\det\pr{A + I}} \del_x + \frac{  a_{12} + i \pr{a_{22} + 1}}{\det\pr{A + I}}  \del_y.
\label{DDet1Expr}
\end{align}

In addition to the operator $D$, we will also make use of an operator that is related to $D$ through some function $w$.
For a given function $w$, set
$$\eta_w\pr{z} = \left\{\begin{array}{ll} 
\eta\pr{z} + \nu\pr{z} \frac{\ol{\del w}}{\del w} & \text{ for } \del w \ne 0  \\  
\eta\pr{z} + \nu\pr{z} & \text{ otherwise }
\end{array} \right.,$$
where $\eta$ and $\nu$ are as defined in \eqref{etaDef} and \eqref{nuDef}, respectively.
By Lemma \ref{etanuBds}, it follows that $\disp \abs{\eta_w} \le \frac{1 - \la}{1+\la}$.
Define
\begin{align}
D_w f = \ol{\del} f + \eta_w\pr{z} \del f.
\label{DwDef}
\end{align}
If $\eta_w\pr{z} = \al_w\pr{z} + i \be_w\pr{z}$, then
\begin{align}
D_w &= \frac{1}{2} \brac{\del_x + i \del_y + \pr{\al_w + i \be_w} \pr{\del_x - i \del_y } }
\nonumber \\
&= \frac{1 + \al_w + i \be_w}{2} \del_x + \frac{\be_w + i\pr{1 - \al_w}}{2} \del_y
\end{align}
Bertrami operators of this form will be used in the proofs of the main theorems.

At times, the dependence on $w$ will not be important to our arguments, so we define
\begin{equation}
\hat D = \frac{1 + \al + i \be}{2} \del_x + \frac{ \be + i \pr{1 - \al}}{2} \del_y,
\label{hatDDef}
\end{equation}
where $\al, \be$ are assumed to be functions of $z$ such that $\disp \al^2 + \be^2 \le \pr{\frac{1 - \la}{1+\la}}^2 < 1$.
Associated to $\hat D$ is the second-order elliptic operator $\hat L = \di \pr{ \hat A \gr}$ with
\begin{equation}
\hat A 
= \brac{\begin{array}{ll} \frac{\pr{1+\al}^2 + \be^2}{1 - \al^2 - \be^2} & \frac{2 \be}{1 - \al^2 - \be^2} \\ \frac{2\be}{1 - \al^2 - \be^2} &  \frac{\pr{1-\al}^2 + \be^2}{1 - \al^2 - \be^2} \end{array}}
= \brac{\begin{array}{ll} \hat a_{11} & \hat a_{12} \\ \hat a_{12} & \hat a_{22} \end{array}}.
\label{hatADef}
\end{equation}
A computation shows that the smallest eigenvalue of $\hat A$ satisfies
\begin{align*}
\la_- 
&= 1 - \frac{2}{1 + \pr{\al^2 + \be^2}^{-1/2}}=\frac{1-\sqrt{\alpha^2+\beta^2}}{1+\sqrt{\alpha^2+\beta^2}}
\ge \la,
\end{align*}
while the largest eigenvalue of $\hat A$ satisfies
\[
\lambda_+=\frac{1+\sqrt{\alpha^2+\beta^2}}{1-\sqrt{\alpha^2+\beta^2}}\le\lambda^{-1}.
\]
Therefore we can see that $\hat A$ has the same ellipticity constant, $\la$.
Finally, note that if $\det A = 1$, then $D$ takes the form of $\hat D$.
This means that the rest of the results of this section may be applied to $D$ in this case.
\begin{rem}\label{rem4.1}
{Note that if $D$ is given as in \eqref{DDef} and $Df=0$, then $D_wf=0$ with $w=f$, where $D_w$ is defined in \eqref{DwDef}.}
\end{rem}

\subsection{Computational results for elliptic operators}

The following results show that $\hat D$ relates to $\hat L$ in some of the same ways that $\ol \del$ relates to $\LP$.
These properties will allow us to prove the Hadamard three-quasi-circle theorem.

\begin{lem}
If $\hat D f = 0$, where $f\pr{x,y} = u\pr{x,y} + i v\pr{x,y}$ for real-valued $u$ and $v$, then
\begin{align*}
& \hat L u = 0 = \hat Lv.
\end{align*}
\label{rpLem}
\end{lem}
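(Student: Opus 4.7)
The plan is to mimic the classical observation that if $f = u+iv$ is holomorphic, i.e.\ $\ol\del f = 0$, then both $u$ and $v$ are harmonic; we replace the Cauchy--Riemann equations by the generalized first-order system encoded in $\hat D f = 0$, and replace $\Delta$ by $\hat L$.

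First, I would write out $\hat D f = 0$ with $f = u + iv$ by substituting into the explicit formula \eqref{hatDDef} and separating real and imaginary parts. This yields the two real first-order relations
\begin{align*}
(1+\al) u_x + \be u_y &= \be v_x + (1-\al) v_y, \\
\be u_x + (1-\al) u_y &= -(1+\al) v_x - \be v_y,
\end{align*}
i.e.\ the linear system $M \begin{pmatrix} v_x \\ v_y \end{pmatrix} = \begin{pmatrix} (1+\al)u_x + \be u_y \\ -\be u_x - (1-\al)u_y \end{pmatrix}$ with $M = \begin{pmatrix} \be & 1-\al \\ 1+\al & \be \end{pmatrix}$ of determinant $\be^2 - (1-\al^2) = -(1-\al^2-\be^2)$. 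The assumption $\al^2+\be^2 \le \pr{\frac{1-\la}{1+\la}}^2 < 1$ guarantees $\det M \ne 0$.

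Next, I would invert this system via Cramer's rule. After collecting terms, the expressions for $v_x$ and $v_y$ simplify precisely to the generalized Cauchy--Riemann-type identities
\begin{align*}
v_y &= \frac{(1+\al)^2 + \be^2}{1-\al^2-\be^2}\, u_x + \frac{2\be}{1-\al^2-\be^2}\, u_y = \hat a_{11} u_x + \hat a_{12} u_y, \\
-v_x &= \frac{2\be}{1-\al^2-\be^2}\, u_x + \frac{(1-\al)^2 + \be^2}{1-\al^2-\be^2}\, u_y = \hat a_{12} u_x + \hat a_{22} u_y,
\end{align*}
which is exactly the content one needs: the stream-function relation $\hat A \gr u = (v_y, -v_x)^T$.

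Having these, $\hat L u = 0$ follows immediately by applying $\del_x$ to the first identity and $\del_y$ to the second and subtracting, using the equality of mixed partials $v_{xy} = v_{yx}$:
\[
\hat L u = \del_x(\hat a_{11} u_x + \hat a_{12} u_y) + \del_y(\hat a_{12} u_x + \hat a_{22} u_y) = v_{yx} - v_{xy} = 0.
\]
For the corresponding statement $\hat L v = 0$, I would invert the relation above, noting that $\det \hat A = 1$ (a short direct verification, or a consequence of the factorization of $\hat A$), so $\hat A^{-1}$ has the same form with entries permuted and signed appropriately. This gives the dual identities $u_x = \hat a_{12} v_x + \hat a_{22} v_y$ and $-u_y = \hat a_{11} v_x + \hat a_{12} v_y$, and then the same mixed-partial argument yields $\hat L v = 0$.

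The argument is essentially a computation, so there is no real obstacle beyond bookkeeping; the only point that requires care is verifying the algebraic identity $\det\hat A = 1$ (equivalently, that $((1+\al)^2 + \be^2)((1-\al)^2 + \be^2) - 4\be^2 = (1-\al^2-\be^2)^2$), which is needed to pass symmetrically from $u$ to $v$.
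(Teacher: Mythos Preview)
Your proposal is correct and follows essentially the same approach as the paper: separate $\hat D f = 0$ into real and imaginary parts, solve the resulting $2\times 2$ linear system to obtain the stream-function relations $\hat A\gr u = (v_y,-v_x)^T$ and $\hat A\gr v = (-u_y,u_x)^T$, and then use equality of mixed partials to conclude $\hat L u = \hat L v = 0$. The only cosmetic difference is that the paper obtains the second pair of identities (for $v$) directly from the original real/imaginary system by a parallel algebraic manipulation, whereas you obtain it by inverting the first pair using $\det\hat A = 1$; the content is the same. (One harmless slip: where you say ``subtracting'' you in fact add the differentiated identities, as your displayed line correctly shows.)
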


\begin{proof}
If $\hat D f = 0$, then it follows from \eqref{hatDDef} that the following Cauchy-Riemann type equations hold
\begin{equation}
\left\{ \begin{array}{l}  
\pr{1 + \al} u_x - \be v_x + \be u_y - \pr{1 - \al} v_y = 0 \\
\be u_x + \pr{1 + \al} v_x + \pr{1 - \al} u_y + \be v_y = 0.
\end{array}\right.
\label{CRType}
\end{equation}
Some algebraic manipulations give rise to two more equivalent sets of equations
\begin{equation}
\left\{ \begin{array}{l}  
\hat a_{11} u_x + \hat a_{12} u_y -  v_y = 0 \\
\hat a_{12} u_x +  \hat a_{22} u_y + v_x = 0,
\end{array}\right.
\label{CR4Eq1}
\end{equation}
and
\begin{equation}
\left\{ \begin{array}{l}  
\hat a_{11} v_x + \hat a_{12} v_y + u_y = 0 \\
\hat a_{12} v_x + \hat a_{22} v_y - u_x = 0,
\end{array}\right.
\label{CR4Eq2}
\end{equation}
where we have used the definition of $\hat A$ in \eqref{hatADef}.
From \eqref{CR4Eq1}, we have
\begin{align*}
0 &= \del_x\brac{\hat a_{11} u_x + \hat a_{12} u_y - v_y} + \del_y \brac{\hat a_{12} u_x + \hat a_{22} u_y + v_x},
\end{align*}
so that $\hat L u = 0$.
Similarly, by \eqref{CR4Eq2},
\begin{align*}
0 &= \del_x\brac{\hat a_{11} v_x + \hat a_{12} v_y + u_y} + \del_y \brac{\hat a_{12} v_x + \hat a_{22} v_y - u_x},
\end{align*}
so that $\hat L v = 0$ as well.
\end{proof}

We find another parallel with the Laplace equation.  
As in the case of $\hat L = \LP$, the logarithm of the norm of ${f}$ is a subsolution to the second-order equation whenever $\hat D f = 0$. 
To see this, it suffices to prove that

\begin{lem}
If $\hat D f = 0$ and $f \ne 0$, then $\hat L\brac{ \log\abs{f\pr{z}}} = 0$.
\label{logLem}
\end{lem}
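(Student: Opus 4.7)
The plan is to reduce the claim to the harmonic case $\hat L = \Delta$ by exploiting the conjugacy structure already exposed in Lemma \ref{rpLem}. Write $\log|f| = \tfrac{1}{2}\log(u^2+v^2)$ and compute
\[
\partial_i \log|f| = \frac{u\,\partial_i u + v\,\partial_i v}{u^2+v^2}.
\]
Setting $X_i = \hat a_{ij}\partial_j u$ and $Y_i = \hat a_{ij}\partial_j v$ (so that $\partial_i X_i = \hat L u = 0$ and $\partial_i Y_i = \hat L v = 0$ by Lemma \ref{rpLem}), I would expand
\[
\hat L\,\log|f| = \partial_i\!\left[\frac{u X_i + v Y_i}{u^2+v^2}\right]
\]
using the quotient and product rules. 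After discarding the terms that contain $\hat L u$ and $\hat L v$, the remainder is
\[
\hat L\log|f| = \frac{\langle \gr u, \hat A\gr u\rangle + \langle \gr v, \hat A \gr v\rangle}{u^2+v^2} - \frac{2\bigl(u X_i + v Y_i\bigr)\bigl(u\,\partial_i u + v\,\partial_i v\bigr)}{(u^2+v^2)^2},
\]
so everything reduces to an algebraic identity between the quadratic forms $\langle \gr u, \hat A\gr u\rangle$, $\langle \gr v, \hat A\gr v\rangle$, and $\langle \gr u, \hat A \gr v\rangle$.

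The main step — and, I expect, the only subtle one — is to extract these identities from the Cauchy–Riemann type systems \eqref{CR4Eq1} and \eqref{CR4Eq2}. Rewriting \eqref{CR4Eq1} and \eqref{CR4Eq2} in vector form and using $\hat A = \hat A^T$, I would obtain the crucial relations
\[
\hat A \gr u = \begin{pmatrix} v_y \\ -v_x \end{pmatrix},\qquad \hat A\gr v = \begin{pmatrix} -u_y \\ u_x \end{pmatrix},
\]
i.e.\ $\hat A\gr u = -J\gr v$ and $\hat A\gr v = J\gr u$, where $J$ is rotation by $\pi/2$. From these, a direct pairing yields
\[
\langle \gr u, \hat A\gr u\rangle = u_x v_y - u_y v_x = \langle \gr v, \hat A\gr v\rangle,\qquad \langle \gr u, \hat A\gr v\rangle = 0.
\]

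With these three identities in hand, the remaining computation is purely mechanical. The numerator of the last displayed fraction expands as
\[
2\bigl[u^2\langle \gr u,\hat A\gr u\rangle + uv\bigl(\langle \gr u,\hat A\gr v\rangle+\langle \gr v,\hat A\gr u\rangle\bigr) + v^2\langle \gr v,\hat A\gr v\rangle\bigr],
\]
which, upon substituting $\langle \gr u, \hat A\gr u\rangle = \langle \gr v, \hat A\gr v\rangle$ and $\langle \gr u, \hat A\gr v\rangle = 0$, equals $2(u^2+v^2)\langle \gr u,\hat A\gr u\rangle$. Dividing by $(u^2+v^2)^2$ exactly cancels the first term, giving $\hat L\log|f| = 0$. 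The bulk of the obstacle is therefore bookkeeping: once the pair of matrix identities $\hat A\gr u = -J\gr v$, $\hat A\gr v = J\gr u$ is read off from \eqref{CR4Eq1}--\eqref{CR4Eq2}, the vanishing is automatic, and there is no need to appeal to any structural property of $\hat A$ beyond symmetry.
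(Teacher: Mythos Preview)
Your argument is correct and follows essentially the same route as the paper: expand $\hat L\log|f|$, kill the terms containing $\hat L u$ and $\hat L v$ via Lemma~\ref{rpLem}, and reduce to algebraic identities coming from \eqref{CR4Eq1}--\eqref{CR4Eq2}. The only difference is packaging: the paper verifies the two needed identities component by component, whereas you recast \eqref{CR4Eq1}--\eqref{CR4Eq2} as $\hat A\gr u=-J\gr v$, $\hat A\gr v=J\gr u$ and read off $\langle\gr u,\hat A\gr u\rangle=\langle\gr v,\hat A\gr v\rangle=u_xv_y-u_yv_x$ and $\langle\gr u,\hat A\gr v\rangle=0$ directly, which is cleaner but not materially different.
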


\begin{proof}
If $f = u + i v$, where $u$ and $v$ are real-valued, then $\log\abs{f\pr{z}} = \frac{1}{2} \log\pr{u^2 + v^2}$.
We have
\begin{align*}
\del_x \log\abs{f\pr{z}} &= \frac{u u_x + v v_x}{u^2 + v^2} \\
\del_y \log\abs{f\pr{z}} &= \frac{u u_y + v v_y}{u^2 + v^2}. 
\end{align*}
Then,
\begin{align*}
\hat L [\log \abs{f\pr{x}} ]
&= \del_x \pr{\hat a_{11}\del_x \log\abs{f\pr{z}} + \hat a_{12}\del_y \log\abs{f\pr{z}}} 
+ \del_y \pr{\hat a_{12}\del_x \log\abs{f\pr{z}} + \hat a_{22}\del_y \log\abs{f\pr{z}}} \\
&=\del_x \pr{\hat a_{11}\frac{u u_x + v v_x}{u^2 + v^2} + \hat a_{12}\frac{u u_y + v v_y}{u^2 + v^2}} 
+ \del_y \pr{\hat a_{12}\frac{u u_x + v v_x}{u^2 + v^2} + \hat a_{22}\frac{u u_y + v v_y}{u^2 + v^2}} \\
&=\brac{\del_x \pr{\hat a_{11}u_x + \hat a_{12}u_y} + \del_y \pr{\hat a_{12}u_x+ \hat a_{22}u_y}}\frac{u }{u^2 + v^2} \\
&+\brac{\del_x \pr{\hat a_{11}v_x + \hat a_{12} v_y} + \del_y \pr{\hat a_{12} v_x + \hat a_{22} v_y}}\frac{v}{u^2 + v^2} \\
&+ \del_x \pr{\frac{u }{u^2 + v^2}} \hat a_{11} u_x +\del_x \pr{\frac{u }{u^2 + v^2}} \hat a_{12} u_y
+ \del_y \pr{\frac{u }{u^2 + v^2} }\hat a_{12} u_x + \del_y \pr{\frac{u }{u^2 + v^2}}\hat a_{22} u_y \\
&+\del_x \pr{\frac{v }{u^2 + v^2}}\hat a_{11} v_x + \del_x\pr{\frac{v }{u^2 + v^2}}\hat a_{12} v_y
+ \del_y \pr{\frac{v }{u^2 + v^2}}\hat a_{12}v_x + \del_y\pr{\frac{ v }{u^2 + v^2}}\hat a_{22}v_y. 
\end{align*}
Since $\hat L u = 0 = \hat Lv$ by the previous lemma, the top two lines vanish and we have,
\begin{align*}
\hat L [\log \abs{f\pr{x}}] 
&= \del_x \pr{\frac{u }{u^2 + v^2}} \hat a_{11} u_x +\del_x \pr{\frac{u }{u^2 + v^2}} \hat a_{12} u_y
+ \del_y \pr{\frac{u }{u^2 + v^2} }\hat a_{12} u_x + \del_y \pr{\frac{u }{u^2 + v^2}}\hat a_{22} u_y \\
&+\del_x \pr{\frac{v }{u^2 + v^2}}\hat a_{11} v_x + \del_x\pr{\frac{v }{u^2 + v^2}}\hat a_{12} v_y
+ \del_y \pr{\frac{v }{u^2 + v^2}}\hat a_{12}v_x + \del_y\pr{\frac{ v }{u^2 + v^2}}\hat a_{22}v_y \\
&= \set{\hat a_{11} \brac{\pr{u_x}^2+\pr{v_x}^2} + 2 \hat a_{12}\pr{u_x u_y + v_x v_y} + \hat a_{22}\brac{ \pr{u_y}^2 + \pr{v_y}^2 }}\frac{1}{u^2 + v^2} \\
&-2 {\brac{\hat a_{11}\pr{uu_x + v v_x}^2 + 2\hat a_{12}\pr{u u_x + v v_x}\pr{u u_y + v v_y} + \hat a_{22}\pr{u u_y + v v_y }^2} }\frac{1 }{\pr{u^2 + v^2}^2}.
\end{align*}
By the relations \eqref{CR4Eq1} and \eqref{CR4Eq2},
\begin{align*}
& \hat a_{11} \brac{\pr{u_x}^2+\pr{v_x}^2} + 2 \hat a_{12}\pr{u_x u_y + v_x v_y} + \hat a_{22}\brac{ \pr{u_y}^2 + \pr{v_y}^2} \\
&= \hat a_{11} \brac{\pr{u_x}^2+\pr{v_x}^2} 
+ \brac{u_x\pr{v_y - \hat a_{11} u_x} + v_x\pr{-u_y - \hat a_{11}v_x}} \\
&+\brac{ \pr{-v_x - \hat a_{22} u_y} u_y + \pr{u_x - \hat a_{22} v_y} v_y}
+  \hat a_{22}\brac{ \pr{u_y}^2 + \pr{v_y}^2} \\
&= 2 \pr{u_x v_y - v_x u_y},
\end{align*}
and
\begin{align*}
& \hat a_{11}\pr{uu_x + v v_x}^2 + 2\hat a_{12}\pr{u u_x + v v_x}\pr{u u_y + v v_y} + \hat a_{22}\pr{u u_y + v v_y }^2 \\
&= \pr{uu_x + v v_x}\set{ \hat a_{11}\pr{uu_x + v v_x}
+ \brac{u \pr{v_y- \hat a_{11} u_x} + v\pr{- u_y  - \hat a_{11} v_x}}} \\
&+ \set{\brac{u\pr{- v_x - \hat a_{22} u_y}+ v \pr{u_x  - \hat a_{22} v_y}} + \hat a_{22}\pr{u u_y + v v_y}} \pr{u u_y + v v_y} \\
&= \pr{u u_x + v v_x}\pr{u v_y -v u_y} + \pr{-u v_x + v u_x}\pr{u u_y + v v_y} \\
&= \pr{u_x v_y - u_y v_x} \pr{u^2 + v^2}.
\end{align*}
Therefore,
\begin{align*}
\hat L [\log \abs{f\pr{z}}] &= 0,
\end{align*}
proving the lemma.
\end{proof}

Since $\LP = 4 \bar \del \del = 4 \del \bar \del$ is used in \cite{KSW15} to prove the third version of the theorem, we would like a decomposition for our operator $L = \di\pr{A \gr}$ into first-order operators.
Under some additional assumptions on the structure of $A$, the following lemma shows that this is possible.

\begin{lem}
Assume that $A$ has determinant equal to $1$.
Then the operator $L$ may be decomposed as
$$L = \pr{D + \widetilde W} \widetilde D,$$
where
\begin{align*}
\widetilde D &= \brac{1 + a_{11} - i a_{12}} \del_x + \brac{a_{12} - i\pr{1+a_{22}}} \del_y 
= \det\pr{A+I} \overline D  \\
\widetilde W 
&= \frac{\pr{\al \del_x a_{11} - \be \del_x a_{12} + \ga \del_y a_{11} + \de \del_y a_{12}} + i \pr{\ga \del_x a_{11} + \de \del_x a_{12} - \al \del_y a_{11} + \be \del_y a_{12}}}{ a_{11} \det\pr{A+I}^2}  
\end{align*}
\begin{align*}
&\al = a_{11} + a_{22} + 2 a_{11}a_{22} \qquad
\be = 2 a_{12}\pr{1 + a_{11}} \\
&\ga = a_{12}\pr{a_{22} - a_{11}} \qquad\qquad
\de = \pr{1 + a_{11}}^2 - a_{12}^2,
\end{align*}
\label{decompLem}
and $D$ is given by \eqref{DDet1Expr}.
\end{lem}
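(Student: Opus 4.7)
The plan is to verify the factorization $L = (D + \widetilde W)\widetilde D$ by direct computation, exploiting the simplified form \eqref{DDet1Expr} of $D$ available when $\det A = 1$, together with the constraint $a_{11}a_{22} - a_{12}^2 = 1$, which gives $\det(A+I) = 2 + a_{11} + a_{22}$. The proof is essentially a ``reverse engineering'' of $\widetilde W$: once one expands $(D + \widetilde W)\widetilde D u$ it is clear what $\widetilde W$ must be in order that the first-order terms in $u$ match those of $Lu$.

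First, I would compute $g := \widetilde D u$ explicitly. Writing $g = g_1 + i g_2$, one gets
\[
g_1 = (1+a_{11})u_x + a_{12}u_y, \qquad g_2 = -a_{12}u_x - (1+a_{22})u_y.
\]
Second, I would apply $D$ to $g$, using the Leibniz rule to separate the expression into (i) a \emph{principal part} in which both derivatives land on $u$, and (ii) a \emph{coefficient-derivative part} in which one derivative lands on some $a_{ij}$ and the other on $u$.

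For the principal part, each of $g_{1,x},g_{1,y},g_{2,x},g_{2,y}$ contributes a combination of $u_{xx}, u_{xy}, u_{yy}$; after multiplying by the complex coefficients of $D$ in \eqref{DDet1Expr} and clearing the factor $\det(A+I)$ in the denominator against the factors $(1+a_{11}), a_{12}, (1+a_{22})$ appearing in $g_1, g_2$, one finds that the real part collapses to exactly $a_{11}u_{xx} + 2a_{12}u_{xy} + a_{22}u_{yy}$ and the imaginary part vanishes. Here the identity $\det(A+I) = a_{11}+a_{22}+2$ together with $a_{11}a_{22}=1+a_{12}^2$ is used repeatedly to reduce expressions such as $(1+a_{11})(1+a_{22}) - a_{12}^2 = \det(A+I)$ and $(1+a_{11})a_{22} - a_{12}^2 \cdot (\text{something})$ to clean form.

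Third, for the coefficient-derivative part of $D\widetilde D u$, each term is a product of some $\partial_x a_{ij}$ or $\partial_y a_{ij}$ with $u_x$ or $u_y$, with a complex coefficient built from the $a_{ij}$. After collecting, this produces an expression of the form $P \, u_x + Q\, u_y$ with complex-valued $P, Q$ that in general do not match the first-order terms $(\partial_x a_{11} + \partial_y a_{12})u_x + (\partial_x a_{12} + \partial_y a_{22})u_y$ of $Lu$. The discrepancy must be supplied by $\widetilde W \widetilde D u$. Since $\widetilde D u = g_1 + ig_2$ is an explicit linear combination of $u_x, u_y$, determining a function $\widetilde W = W_1 + iW_2$ such that $\widetilde W \widetilde D u$ exactly cancels the discrepancy and leaves the correct real first-order terms is a $2\times 2$ linear algebra problem in $u_x, u_y$. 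Inverting the coefficient matrix of $(g_1, g_2)$ gives the factor $a_{11}\det(A+I)^2$ in the denominator of the statement, and the numerators $(\alpha, \beta, \gamma, \delta)$ are exactly the quantities obtained by plugging in the definitions of $g_1, g_2$ and simplifying with $a_{11}a_{22} = 1 + a_{12}^2$.

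The main obstacle is the sheer algebraic bookkeeping. No single step is deep: every reduction uses only the determinant relation and the Leibniz rule. However, keeping track of the many terms, confirming that the imaginary parts vanish after all cancellations, and matching the first-order coefficients to the specific combinations $\alpha,\beta,\gamma,\delta$ given in the statement, is lengthy enough that I would carry out the detailed computation in an appendix rather than in the main text. This fits with the remark in the introduction that a technical proof of one of the facts from Section \ref{tools} is deferred to the appendix.
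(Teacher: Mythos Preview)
Your proposal is correct and follows essentially the same route as the paper's appendix: expand $D\widetilde D$, separate the second-order principal part (which yields $a_{11}u_{xx}+2a_{12}u_{xy}+a_{22}u_{yy}$ after using $\det(A+I)=2+a_{11}+a_{22}$ and $a_{11}a_{22}=1+a_{12}^2$), then determine $\widetilde W$ so that the first-order terms match those of $Lu$. The only point worth flagging is that the system for $\widetilde W$ is actually overdetermined---matching the complex coefficients of both $u_x$ and $u_y$ gives four real equations in the two real unknowns $w_1,w_2$---so part of the computation is a consistency check; the paper handles this by solving two separate $2\times2$ blocks (using $(1+a_{11})^2+a_{12}^2=a_{11}\det(A+I)$ and $(1+a_{22})^2+a_{12}^2=a_{22}\det(A+I)$) and verifying both yield the same $\widetilde W$.
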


The proof of this lemma is straightforward, but tedious. 
We will prove it in the Appendix.

\subsection{A Hadamard three-quasi-circle theorem}

Using the fundamental solution $\hat G$ for the operator $\hat L$, we can now prove the following.

\begin{thm}
Let $f$ be a function for which $\hat D f = 0$.
Set
$$M\pr{s} = \max\set{\abs{f\pr{z}} : z \in Z_s }.$$
Then for any $0 < s_1 < s_2 < s_3$,
\begin{equation}
\log\pr{\frac{s_3}{s_1}} \log M\pr{s_2} \le \log\pr{\frac{s_3}{s_2}} \log M\pr{s_1} + \log\pr{\frac{s_2}{s_1}} \log M\pr{s_3}.
\end{equation}
\label{Hadamard}
\end{thm}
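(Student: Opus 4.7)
The plan is to mimic the classical Hadamard three–circles theorem for $\LP$, replacing the role of $\log|z|$ (the fundamental solution of $\LP$) by $\log \ell(z) = \hat G(z)$ (the fundamental solution of $\hat L$), and using Lemma \ref{logLem} in place of the fact that $\log|f|$ is subharmonic when $f$ is holomorphic. All boundary values will then be read off on the level sets $Z_s$, which are exactly the quasi-circles of $\hat G$.

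Concretely, assume $f \not\equiv 0$ (otherwise there is nothing to prove) and fix $0 < s_1 < s_2 < s_3$. I would first introduce the comparison function on the closed quasi-annulus $\Omega = \{ z : s_1 \le \ell(z) \le s_3\}$,
\begin{equation*}
h(z) = \frac{\log(s_3/\ell(z))\,\log M(s_1) + \log(\ell(z)/s_1)\,\log M(s_3)}{\log(s_3/s_1)}.
\end{equation*}
Because $\log \ell(z) = \hat G(z)$ and $\hat G$ satisfies $\hat L \hat G = 0$ in $\R^2 \setminus \{0\}$, the function $h$ is an affine-in-$\hat G$ combination, hence $\hat L h = 0$ in $\Omega$. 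By construction $h \equiv \log M(s_1)$ on the inner quasi-circle $Z_{s_1}$ and $h \equiv \log M(s_3)$ on the outer quasi-circle $Z_{s_3}$.

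Next, by Lemma \ref{logLem}, $\hat L[\log|f|] = 0$ at every point where $f \ne 0$, so the upper semicontinuous function $\log|f|$ is an $\hat L$-subsolution in $\Omega$ (taking the value $-\infty$ at the isolated zeros of $f$ does not affect this, since one may either regularize by $\tfrac12\log(|f|^2+\epsilon^2)$ and pass to the limit $\epsilon \downarrow 0$, or invoke the fact that solutions of the Beltrami equation $\hat D f = 0$ have isolated zeros via the similarity principle recalled later in this section). On $\partial \Omega = Z_{s_1} \cup Z_{s_3}$ we have $\log|f| \le \log M(s_1) = h$ and $\log|f| \le \log M(s_3) = h$, respectively. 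The maximum principle applied to $\log|f| - h$ in $\Omega$ then yields $\log|f(z)| \le h(z)$ throughout $\Omega$, and in particular for every $z \in Z_{s_2}$
\begin{equation*}
\log|f(z)| \le \frac{\log(s_3/s_2)\,\log M(s_1) + \log(s_2/s_1)\,\log M(s_3)}{\log(s_3/s_1)}.
\end{equation*}
Taking the maximum over $z \in Z_{s_2}$ and multiplying through by $\log(s_3/s_1) > 0$ gives the claimed inequality.

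The only non-routine point is justifying the maximum principle for $\log|f|$ despite its $-\infty$ singularities at zeros of $f$; this is the one step I would be careful to document. The $\epsilon$-regularization $\tfrac12\log(|f|^2+\epsilon^2)$ is strictly $\hat L$-subharmonic away from zeros, has finite boundary values bounded by $\tfrac12\log(M(s_i)^2+\epsilon^2)$ on $Z_{s_i}$, and converges monotonically to $\log|f|$ as $\epsilon \downarrow 0$; applying the classical maximum principle to this regularization and letting $\epsilon \to 0$ furnishes the required comparison without relying on any qualitative zero-set analysis.
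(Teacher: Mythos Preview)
Your proof is correct and follows essentially the same approach as the paper: both apply the maximum principle on the quasi-annulus $\{s_1 \le \ell(z) \le s_3\}$ to $\log|f|$ against an affine-in-$\hat G$ comparison, using Lemma~\ref{logLem} and the fact that $\hat G$ is $\hat L$-harmonic away from the origin. The only cosmetic differences are that the paper adds $a\hat G$ and balances $a$ so the two boundary maxima agree (then interpolates with a parameter $\tau$), whereas you build the interpolant $h$ directly, and the paper disposes of the zeros of $f$ by the one-line observation that $\log|f(z_0)|=-\infty$ cannot be an interior maximum rather than via your $\epsilon$-regularization.
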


\begin{proof}
Let $\mathcal{A}_{s_1, s_3} = \set{z : s_1 \le \ell\pr{ z} \le s_3} = \overline{ Q_{s_3} \setminus Q_{s_1}}$, where $\ell$ is associated to $\hat G$, the fundamental solution of $\hat L$.  
By Lemma \ref{ZsBounds}, this set is contained in an annulus with inner and outer radius depending on $s_1$, $s_3$, and $\la$.
In particular, it is bounded and does not contain the origin.
Therefore, $\hat G\pr{z}$ is bounded on $\mathcal{A}_{s_1, s_3}$. 
Let $z_0$ be in the interior of $\mathcal{A}_{s_1,s_3}$. 
If $f\pr{z_0}=0$, then $a \hat G\pr{z_0}+\log\abs{f\pr{z_0}}=-\infty$ for any $a\in\R$. 
On the other hand, if $f\pr{z_0} \ne 0$, then Lemma~\ref{logLem} implies that $\hat L \brac{a \hat G\pr{z} + \log\abs{f\pr{z}}} = 0$ for $z$ near $z_0$. 
By the maximum principle, $z_0$ cannot be an extremal point. 
Therefore, $a \hat G\pr{z} + \log\abs{f\pr{z}}$ takes it maximum value on the boundary of $\mathcal{A}_{s_1, s_3}$. We will choose the constant $a\in\R$ so that 
$$\max\set{a \hat G\pr{z} + \log\abs{f\pr{z}} : z \in Z_{s_1}}  = \max\set{a \hat G\pr{z} + \log\abs{f\pr{z}} : z \in  Z_{s_3}},$$
or rather
$$\log \pr{s_1^a M\pr{s_1}}  = \log\pr{s_3^a M\pr{s_3}}.$$
It follows that for any $z \in \mathcal{A}_{s_1, s_3}$,
\begin{align*}
a \hat G\pr{z} + \log\abs{f\pr{z}} &\le\log \pr{s_1^a M\pr{s_1}}  ({\mbox{or}}\; \log \pr{s_3^a M\pr{s_3}}).
\end{align*}
Furthermore, for any $s_2 \in \pr{s_1, s_3}$,
\begin{align*}
\max\set{a \hat G\pr{z} + \log\abs{f\pr{z}}: z \in Z_{s_2}} &\le \log \pr{s_1^a M\pr{s_1}} ({\mbox{or}}\;\log\pr{s_3^a M\pr{s_3}}),
\end{align*}
or
$$\log \pr{s_2^a M\pr{s_2}} \le \log \pr{s_1^a M\pr{s_1}} ({\mbox{or}}\; \log\pr{s_3^a M\pr{s_3}}).$$
Consequently,
$${s_2^a M\pr{s_2}} \le {s_1^a M\pr{s_1}} \;({\mbox{or}}\;{s_3^a M\pr{s_3}}),$$
so that for any $\tau \in \pr{0,1}$, since ${s_1^a M\pr{s_1}} = {s_3^a M\pr{s_3}}$, then
\begin{align*}
&{s_2^a M\pr{s_2}} \le  \brac{s_1^a M\pr{s_1}}^\tau \brac{s_3^a M\pr{s_3}}^{1-\tau} \\
& \brac{ M\pr{s_2}}^{\log\pr{\frac{s_3}{s_1}}} \le  \brac{\pr{\frac{s_1}{s_2}}^a M\pr{s_1}}^{\tau \log\pr{\frac{s_3}{s_1}}} \brac{\pr{\frac{s_3}{s_2}}^a M\pr{s_3}}^{\pr{1-\tau}\log\pr{\frac{s_3}{s_1}}}.
\end{align*}
We choose $\tau$ so that $\tau \log\pr{\frac{s_3}{s_1}} = \log\pr{\frac{s_3}{s_2}}$. 
Then $\pr{1-\tau} \log\pr{\frac{s_3}{s_1}} = \log\pr{\frac{s_2}{s_1}}$ and
\begin{align*}
&\brac{\pr{\frac{s_1}{s_2}}^a }^{\tau \log\pr{\frac{s_3}{s_1}}} \brac{\pr{\frac{s_3}{s_2}}^a }^{\pr{1-\tau}\log\pr{\frac{s_3}{s_1}}} 
= \exp\brac{a\log\pr{\frac{s_3}{s_2}} \log\pr{\frac{s_1}{s_2}} + a \log\pr{\frac{s_2}{s_1}} \log \pr{\frac{s_3}{s_2}} } = 1.
\end{align*}
Therefore,
\begin{align*}
& { M\pr{s_2}}^{\log\pr{\frac{s_3}{s_1}}} \le  { M\pr{s_1}}^{ \log\pr{\frac{s_3}{s_2}}} {M\pr{s_3}}^{\log\pr{\frac{s_2}{s_1}}}.
\end{align*}
Taking logarithms completes the proof.
\end{proof}

\begin{cor}\label{3circle}
Let $f$ satisfy $\hat Df=0$. Then for $0<s_1<s_2<s_3$
\[
\norm{f}_{L^\iny\pr{Q_{s_2}}} \le \pr{\norm{f}_{L^\iny\pr{Q_{s_1}}}}^\te \pr{\norm{f}_{L^\iny\pr{Q_{s_3}}}}^{1 - \te},
\]
where
\[
\te=\frac{\log(s_3/s_2)}{\log(s_3/s_1)}.
\]
\end{cor}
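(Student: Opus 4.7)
The plan is to reduce the corollary to Theorem \ref{Hadamard} by identifying $\norm{f}_{L^\iny(Q_s)}$ with the quantity $M(s) = \max\{|f(z)| : z \in Z_s\}$ on each quasi-ball. The Hadamard three-quasi-circle estimate is already phrased in terms of the maxima on quasi-circles, so once the sup on a quasi-ball is shown to be attained on its boundary quasi-circle, the conclusion follows by dividing through by $\log(s_3/s_1)$ and exponentiating.

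First, I would establish a maximum principle: if $\hat D f = 0$ on $Q_{s_3}$ and $f = u + iv$, then $|f|^2 = u^2 + v^2$ is a $\hat L$-subsolution. Indeed, by Lemma \ref{rpLem} one has $\hat L u = \hat L v = 0$, so a direct expansion gives
\begin{equation*}
\hat L(|f|^2) = 2\bigl(u\,\hat L u + v\,\hat L v\bigr) + 2\,\hat a_{ij}\bigl(\del_i u\,\del_j u + \del_i v\,\del_j v\bigr) = 2\,\hat a_{ij}\bigl(\del_i u\,\del_j u + \del_i v\,\del_j v\bigr) \ge 0,
\end{equation*}
by ellipticity of $\hat A$. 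The weak maximum principle for subsolutions of $\hat L$ on the bounded domain $Q_s$ (which does not contain the pole of $\hat G$ unless $s$ is large enough; in any case $\del Q_s = Z_s$) then yields
\begin{equation*}
\norm{f}_{L^\iny(Q_s)} = \sup_{z \in Q_s} |f(z)| = \max_{z \in Z_s} |f(z)| = M(s)
\end{equation*}
for every $s > 0$. (Alternatively, Lemma \ref{logLem} gives that $\log|f|$ is $\hat L$-harmonic off the zero set of $f$ and equals $-\iny$ on that zero set, hence is upper semicontinuous and satisfies the maximum principle on $Q_s$; this yields the same identification.)

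Having matched the two quantities, I would apply Theorem \ref{Hadamard} directly: for $0 < s_1 < s_2 < s_3$,
\begin{equation*}
\log\!\pr{\tfrac{s_3}{s_1}} \log M(s_2) \le \log\!\pr{\tfrac{s_3}{s_2}} \log M(s_1) + \log\!\pr{\tfrac{s_2}{s_1}} \log M(s_3).
\end{equation*}
Dividing by $\log(s_3/s_1) > 0$ and setting $\theta = \log(s_3/s_2)/\log(s_3/s_1)$ (so that $1 - \theta = \log(s_2/s_1)/\log(s_3/s_1)$), this becomes
\begin{equation*}
\log M(s_2) \le \theta \log M(s_1) + (1-\theta) \log M(s_3).
\end{equation*}
Exponentiating and substituting $M(s_i) = \norm{f}_{L^\iny(Q_{s_i})}$ gives exactly the claimed three-quasi-ball inequality.

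The only non-routine step is the first one — justifying $\norm{f}_{L^\iny(Q_s)} = M(s)$ — and even that is essentially a two-line computation followed by the standard weak maximum principle. The mild technicality, which I would note but not dwell on, is that one must handle the zero set of $f$ when invoking Lemma \ref{logLem}; this is precisely the reason I prefer to phrase the maximum principle via the subsolution $|f|^2$, which is smooth and nonnegative on all of $Q_{s_3}$ and therefore bypasses the singular behavior of $\log|f|$ entirely.
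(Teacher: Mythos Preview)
Your proposal is correct and is exactly the natural argument; the paper states the corollary without proof as an immediate consequence of Theorem \ref{Hadamard}, and you have supplied the details (maximum principle to identify $\norm{f}_{L^\iny(Q_s)}$ with $M(s)$, then divide by $\log(s_3/s_1)$ and exponentiate). One small inaccuracy in your parenthetical: the quasi-ball $Q_s$ always contains the origin (the pole of $\hat G$), but this is irrelevant since the maximum principle is applied to $|f|^2$, which is smooth there.
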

\begin{rem}
{From Remark~\ref{rem4.1}, we know that if $Df=0$, then $D_ff=0$. Hence Corollary~\ref{3circle} applies to such $f$}.
\end{rem}

\subsection{The similarity principle}

The approach here is based on the work of Bojarksi, as presented in \cite{Boj09}. 
We will start with a few definitions and facts that will be used below. 
For simplicity, we work on a bounded domain $\Om$. Define the operators
$$T{\om}\pr{z} = -\frac{1}{\pi} \iint_\Om \frac{\om\pr{\zeta}}{\zeta - z} d\zeta$$
$$S \om \pr{z} = -\frac{1}{\pi} \iint_\Om \frac{\om\pr{\zeta}}{\pr{\zeta - z}^2} d\zeta.$$

We will make use the of the following results, collected from \cite{Boj09}.

\begin{lem}
Suppose that $g \in L^p$ for some $p > 2$.  
Then $T g$ exists everywhere as an absolutely convergent integral and $S g$ exists almost everywhere as a Cauchy principal limit.
The following relations hold:
\begin{align*}
& \bar{\del} \pr{T g} = g \\
& \del \pr{T g} = S g \\
& \abs{T g\pr{z}} \le c_p \norm{g}_{L^p}  \\
& \norm{S g}_{L^p} \le C_p \norm{g}_{L^p} \\
& \lim_{p \to 2^+} C_p = 1.
\end{align*}
\end{lem}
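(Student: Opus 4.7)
The plan is to recognize this as a compilation of classical facts about the Cauchy transform $T$ (convolution with $-1/(\pi z)$) and the Beurling--Ahlfors transform $S$, and then verify each assertion by standard singular integral techniques, taking $g$ extended by zero outside $\Om$.

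First, I would handle the pointwise existence and the sup bound for $Tg$. Since $g \in L^p$ with $p > 2$, the dual exponent $q = p/(p-1)$ is strictly less than $2$, so the kernel $1/(\zeta - z)$ lies in $L^q\pr{\Om}$ uniformly in $z \in \R^2$ (by enlarging the domain of integration to a fixed disk containing $\Om - z$ for every $z$ in a bounded region). Hölder's inequality then gives $\abs{Tg\pr{z}} \le c_p \norm{g}_{L^p}$ with $c_p$ depending only on $p$ and $\diam\pr{\Om}$.

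Next, I would establish the two derivative identities. For $\bar\del\pr{Tg} = g$, the key observation is that $-1/\pr{\pi z}$ is the fundamental solution of $\bar\del$, so distributionally $\bar\del\brac{g * \pr{-\pi^{-1}/z}} = g$; the pointwise/$L^p$ version follows by mollifying $g$, using the smooth identity, and passing to the limit via the $L^p$ continuity of $T$. For $\del\pr{Tg} = Sg$, I would differentiate under the integral sign after excising a disk of radius $\eps$ around $z$; the interior calculation produces the kernel $1/\pr{\zeta - z}^2$, while the boundary term is shown to vanish as $\eps \to 0$ using the cancellation $\int_{\abs{\zeta - z} = \eps} d\zeta/\pr{\zeta - z} = 0$. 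This simultaneously shows that $Sg$ exists a.e.\ as a principal value when $g$ is sufficiently regular; the general case is recovered from the $L^p$ boundedness to be established next.

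Finally, I would prove the $L^p$ estimates for $S$. The operator $S$ is a convolution with the Calderón--Zygmund kernel $K\pr{w} = -1/\pr{\pi w^2}$, which has mean zero on every circle centered at the origin. Computing its Fourier transform on $\C \cong \R^2$ shows $\widehat S f\pr{\xi} = \pr{\bar\xi/\xi} \hat f\pr{\xi}$, a symbol of modulus one, so Plancherel yields $\norm{Sg}_{L^2} = \norm{g}_{L^2}$, i.e.\ $C_2 = 1$ exactly. Standard Calderón--Zygmund theory then gives $\norm{S}_{L^r \to L^r} \le M_r$ for every fixed $r > 2$, and Riesz--Thorin interpolation between $L^2$ and $L^r$ yields $C_p \le M_r^{\te\pr{p}}$ with $\te\pr{p} \to 0$ as $p \to 2^+$, so $C_p \to 1$. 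The principal-value existence of $Sg$ a.e.\ for general $g \in L^p$ then follows by approximation from smooth compactly supported functions combined with a standard maximal function argument.

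The main obstacle will be the sharpness of the limit $C_p \to 1$: this requires the exact $L^2$ isometry of $S$, which in turn rests on the modulus-one Fourier symbol calculation, and the interpolation must be applied carefully so that the endpoint constant really is $1$ rather than merely bounded. Once the $L^2$ isometry and a single $L^r$ bound for some $r>2$ are in hand, Riesz--Thorin delivers the rest.
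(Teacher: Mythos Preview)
Your outline is correct and follows the classical route to these facts about the Cauchy and Beurling--Ahlfors transforms. The paper, however, does not prove this lemma at all: it is stated as a collection of known results from Bojarski \cite{Boj09} and used as a black box in the similarity principle that follows. So there is no ``paper's proof'' to compare against; you have supplied the argument the paper simply cites. Your sketch is the standard one (H\"older for the pointwise bound on $T$, the fundamental-solution identity for $\bar\del T = \mathrm{id}$, Fourier symbol $\bar\xi/\xi$ giving the exact $L^2$ isometry of $S$, and Riesz--Thorin from $L^2$ to a fixed $L^r$ to get $C_p\to 1$), and it would fill the gap if a self-contained proof were desired.
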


\begin{lem}[see Lemmas 4.1, 4.3 \cite{Boj09}]
Let $w$ be a generalized solution (possibly admitting isolated singularities) to
\begin{equation*}
\bar \del w + q_1\pr{z} \del w + q_2\pr{z} \ol{\del w} = A\pr{z} w + B\pr{z} \bar w
\end{equation*}
in a bounded domain $\Omega \su \R^2$.
Assume that $\abs{q_1\pr{z}} + \abs{q_2\pr{z}} \le  \alpha_0 < 1$ in $\Om$, and $A$, $B$ are bounded functions.
Then $w\pr{z}$ is given by
$$w\pr{z} = f\pr{z} e^{{T \om\pr{z}}} = f\pr{z} e^{\phi\pr{z}},$$
where $f$ is a solution to
$$\bar \del f + q_0\pr{z} \del f = 0$$
and
$$\phi\pr{z} = T{\om}\pr{z}.$$
Here, $\om$ solves  \eqref{intEq} and $q_0$ is defined by \eqref{q0}.
\label{simPrinc}
\end{lem}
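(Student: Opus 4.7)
The plan is to prove the similarity principle by the classical Vekua--Bojarski ansatz $w = f e^{T\om}$, choosing the auxiliary function $\om$ so that the lower-order terms in the Beltrami equation get absorbed into the multiplicative exponential factor $e^{T\om}$, leaving $f$ as a solution of a purely homogeneous Beltrami equation.

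\emph{Step 1 (reduction to an equation with only a potential term).} I would eliminate the conjugate objects $\ol{\del w}$ and $\bar w$ by exploiting the fact that $w$ itself is given. At points where $w \ne 0$ and $\del w \ne 0$, set
$$q_0\pr{z} = q_1\pr{z} + q_2\pr{z} \frac{\ol{\del w\pr{z}}}{\del w\pr{z}}, \qquad \tilde A\pr{z} = A\pr{z} + B\pr{z} \frac{\bar w\pr{z}}{w\pr{z}}.$$
Then $\abs{q_0\pr{z}} \le \abs{q_1\pr{z}} + \abs{q_2\pr{z}} \le \alpha_0 < 1$ while $\tilde A \in L^\iny\pr{\Om}$, and the original equation becomes
$$\bar\del w + q_0 \del w = \tilde A w,$$
the isolated points where $w$ or $\del w$ vanishes being absorbed into the generalized-solution framework. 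This $q_0$ is the function intended in the statement.

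\emph{Step 2 (ansatz $w = f e^\phi$).} With $\phi = T\om$ for an $\om \in L^p\pr{\Om}$ still to be determined, the identities $\bar\del\pr{T\om} = \om$ and $\del\pr{T\om} = S\om$ from the preceding lemma give, by direct computation,
$$\bar\del w + q_0 \del w = e^{\phi}\brac{\bar\del f + q_0 \del f} + e^{\phi} f \brac{\om + q_0 S\om}.$$
Comparing with $\tilde A w = \tilde A f e^\phi$, the function $f$ will satisfy the homogeneous Beltrami equation $\bar\del f + q_0 \del f = 0$ as soon as $\om$ satisfies the integral equation
$$\om + q_0 S \om = \tilde A,$$
which is the one intended in the statement.

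\emph{Step 3 (solving the integral equation).} Rewrite as $\pr{I + q_0 S}\om = \tilde A$. The preceding lemma gives $\norm{Sg}_{L^p} \le C_p \norm{g}_{L^p}$ with $C_p \to 1$ as $p \to 2^+$, and $\abs{q_0} \le \alpha_0 < 1$, so $\norm{q_0 S}_{L^p \to L^p} \le \alpha_0 C_p$. Choosing $p > 2$ sufficiently close to $2$ so that $\alpha_0 C_p < 1$, the operator $I + q_0 S$ is invertible on $L^p\pr{\Om}$ by Neumann series, and since $\tilde A \in L^\iny\pr{\Om} \su L^p\pr{\Om}$ on the bounded $\Om$, this produces a unique $\om \in L^p\pr{\Om}$.

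\emph{Step 4 (conclusion).} With this $\om$, $\phi = T\om$ is bounded and continuous on $\Om$ by the pointwise bound $\abs{Tg\pr{z}} \le c_p \norm{g}_{L^p}$, so $e^\phi$ is bounded and nowhere zero. Setting $f := w e^{-\phi}$, the computation of Step 2 yields $\bar\del f + q_0 \del f = 0$ on $\set{w \ne 0}$, with $f$ inheriting the isolated zeros or singularities of $w$ as a generalized solution. This produces the representation $w = f e^{T\om} = f e^{\phi}$ claimed in the lemma.

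The main obstacle is Step 3: the inversion of $I + q_0 S$ relies on the borderline mapping property $C_p \to 1$ as $p \to 2^+$ of the Beurling-type operator $S$, which is exactly what the last bullet of the preceding lemma provides. Without this estimate one cannot handle $\alpha_0$ arbitrarily close to $1$, in which regime the elliptic theory still applies. Everything else is a mechanical calculation once $\bar\del T = \text{identity}$, $\del T = S$, and the pointwise bound on $T$ are in hand.
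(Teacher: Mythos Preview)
Your proposal is correct and follows essentially the same route as the paper: reduce to $\bar\del w + q_0 \del w = h w$ by absorbing the conjugate terms, solve $\om + q_0 S\om = h$ in $L^p$ for $p>2$ close enough to $2$ that $\alpha_0 C_p < 1$, set $\phi = T\om$, and verify directly that $f = w e^{-\phi}$ satisfies $\bar\del f + q_0 \del f = 0$. The only cosmetic difference is that the paper defines $f$ first and checks the equation, whereas you compute forward from the ansatz; the content is identical.
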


The proof ideas are available in \cite{Boj09}.
For completeness, we include the proof.

\begin{proof}
Let $w\pr{z}$ be the generalized solution.
Set
$$h\pr{z} = \left\{\begin{array}{ll} 
A\pr{z} + B\pr{z} \frac{\bar w}{w} & \text{ for } w\pr{z} \ne 0 \text{ and } w\pr{z} \ne \iny \\  
A\pr{z} + B\pr{z} & \text{ otherwise }
\end{array}\right.$$
\begin{equation}\label{q0}
q_0\pr{z} = \left\{\begin{array}{ll} 
q_1\pr{z} + q_2\pr{z} \frac{\ol{\del w}}{\del w} & \text{ for } \del w \ne 0  \\  
q_1\pr{z} + q_2\pr{z} & \text{ otherwise }.
\end{array} \right.
\end{equation}
We have $\abs{q_0\pr{z}} \le \abs{q_1\pr{z}} + \abs{q_2\pr{z}} \le \alpha_0$. Consider the integral equation
\begin{equation}
\om + q_0 S \om = h.
\label{intEq}
\end{equation}
Let $p > 2$ be such that $C_p q_0 < 1$.
Since $h\pr{z} \in L^p\pr{\Omega}$, then by a fixed point argument, this integral equation has a unique solution $\om\pr{z} \in L^p$.
Set $\phi\pr{z} = T \om\pr{z}$, then define $f\pr{z} = w\pr{z} e^{- \phi\pr{z}}$.
We see that
\begin{align*}
\bar \del f &= \bar \del w e^{- \phi} - \bar \del \phi w e^{-\phi} = \bar \del w e^{- \phi} - \om w e^{-\phi} \\
\del f &= \del w e^{- \phi} - \del \phi w e^{-\phi} = \del w e^{- \phi} - S \om w e^{-\phi}.
\end{align*}
It follows that
\begin{align*}
\bar \del f + q_0 \del f &= \brac{\bar \del w + q_0 \del w - \pr{\om + q_0 S \om }w} e^{-\phi} \\
&= \brac{\bar \del w + q_0 \del w - hw} e^{-\phi} \\
&= \brac{\bar \del w + q_1 \del w + q_2 \ol{\del w} - Aw - B \bar w} e^{-\phi} \\
&= 0.
\end{align*}
\end{proof}

\begin{cor}
Let $w$ be a generalized solution (possibly admitting isolated singularities) to
\begin{equation*}
\bar \del w + q_1\pr{z} \del w + q_2\pr{z} \ol{\del w} = A\pr{z} w + B\pr{z} \bar w
\end{equation*}
in a bounded domain $\Omega \su \R^2$.
Assume that $\abs{q_1\pr{z}} + \abs{q_2\pr{z}} \le \alpha_0 < 1$ in $\Om$, and $A$, $B$ are bounded functions.
Then $w\pr{z}$ is given by
$$w\pr{z} = f\pr{z} g\pr{z},$$
where $f$ is a solution to
$$\bar \del f + q_0\pr{z} \del f = 0$$
and
$$\exp\brac{-C \pr{ \norm{A}_{L^\iny\pr{\Omega}} +  \norm{B}_{L^\iny\pr{\Omega}}}} \le \abs{g\pr{z}} \le \exp\brac{C \pr{ \norm{A}_{L^\iny\pr{\Omega}} +  \norm{B}_{L^\iny\pr{\Omega}}}}.$$
\label{simCor}
\end{cor}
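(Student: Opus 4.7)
The plan is to apply Lemma \ref{simPrinc} directly and then extract a quantitative $L^\infty$ bound on the exponent $T\omega(z)$ in terms of $\|A\|_{L^\infty} + \|B\|_{L^\infty}$. Setting $g(z) := e^{T\omega(z)}$, the decomposition $w = f \cdot g$ is immediate from the lemma, so the whole task reduces to estimating $|T\omega(z)|$.

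First I would observe that the function $h$ constructed in the proof of Lemma \ref{simPrinc} satisfies the pointwise bound $|h(z)| \le \|A\|_{L^\infty(\Omega)} + \|B\|_{L^\infty(\Omega)}$ (by the very definition of $h$, since the factors $\bar w/w$ are of modulus $1$ whenever they appear). Since $\Omega$ is bounded, this yields $\|h\|_{L^p(\Omega)} \le |\Omega|^{1/p} \bigl(\|A\|_{L^\infty} + \|B\|_{L^\infty}\bigr)$ for every $p > 2$.

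Next I would estimate $\omega$ via the fixed-point equation $\omega + q_0 S\omega = h$. Choose $p > 2$ close enough to $2$ so that $\alpha_0 C_p < 1$; this is possible because $C_p \to 1$ as $p \to 2^+$ and $\alpha_0 < 1$. Then from the $L^p$-boundedness of $S$,
\begin{equation*}
\|\omega\|_{L^p(\Omega)} \le \|h\|_{L^p(\Omega)} + \alpha_0 C_p \|\omega\|_{L^p(\Omega)},
\end{equation*}
which rearranges to
\begin{equation*}
\|\omega\|_{L^p(\Omega)} \le \frac{|\Omega|^{1/p}}{1 - \alpha_0 C_p}\bigl(\|A\|_{L^\infty(\Omega)} + \|B\|_{L^\infty(\Omega)}\bigr).
\end{equation*}
Finally, using the pointwise bound $|T\omega(z)| \le c_p \|\omega\|_{L^p(\Omega)}$, I conclude that $|T\omega(z)| \le C \bigl(\|A\|_{L^\infty(\Omega)} + \|B\|_{L^\infty(\Omega)}\bigr)$ with a constant $C$ depending only on $\alpha_0$ and $|\Omega|$.

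Since $|g(z)| = |e^{T\omega(z)}| = e^{\operatorname{Re} T\omega(z)}$ and $|\operatorname{Re} T\omega(z)| \le |T\omega(z)|$, both one-sided bounds follow at once, giving the claimed two-sided estimate on $|g|$. There is no real obstacle here; the only subtlety is the selection of $p$ slightly above $2$ so that the contraction constant $\alpha_0 C_p$ stays strictly less than $1$, which is exactly what the limit $C_p \to 1$ as $p \to 2^+$ in the preceding lemma provides.
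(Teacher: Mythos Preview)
Your argument is correct and follows essentially the same route as the paper: apply Lemma~\ref{simPrinc} to write $g=e^{T\omega}$, use the integral equation $\omega+q_0 S\omega=h$ with $C_p\alpha_0<1$ to get $\|\omega\|_{L^p}\le C\|h\|_{L^p}$, and then the pointwise bound $|T\omega|\le c_p\|\omega\|_{L^p}$ together with $\|h\|_{L^p}\le C(\|A\|_{L^\infty}+\|B\|_{L^\infty})$ yields the two-sided estimate on $|g|$. Your write-up is simply more explicit about the choice of $p$ and the dependence of the constants.
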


\begin{proof}
From the previous lemma, we have that $g\pr{z} = \exp\pr{T \om \pr{z}}$, where $\om$ is the unique solution to \eqref{intEq}.
Since $C_p \alpha_0 < 1$, then
$$\norm{\om}_{L^p} \le C \norm{h}_{L^p}.$$
Therefore,
$$\abs{T \om \pr{z}} \le C \norm{h}_{L^p} \le C \brac{\norm{A}_{L^\iny\pr{\Omega}} +  \norm{B}_{L^\iny\pr{\Omega}}},$$
where $C$ depends on $\Omega$.
The conclusion follows.
\end{proof}

\section{The proof of Theorem \ref{localEst}}
\label{localEstProof}

Before we can prove Theorem \ref{localEst}, we need to develop a set of results that are analogous to those in \cite{KSW15}.
The first step is to show that a positive multiplier exists.
We then use this positive multiplier to transform the PDE \eqref{localPDE} into a divergence-form equation.
The divergence-form equation is used to introduce a stream function, which gives rise to a Beltrami equation.
Then, using the similarity principle of Bojarski and the Hadamard three-quasi-circle theorem, we are able to prove Theorem \ref{localEst}. 
From now on, unless specified otherwise, all constants $C$, $c$ depend on $\la$ and $\mu$.
Moreover, these constants are allowed to change from line to line.
We also use the more compact notation $\si\pr{\cdot}$ and $\rho\pr{\cdot}$ in place of $\si\pr{\cdot; \la}$ and $\rho\pr{\cdot; \la}$ where it is understood that these functions depend on the ellipticity constant $\la$.

The first step is to show that there exists a positive solution $\phi$ to \eqref{localPDE} in the ball $B_d$, where $d = \rho\pr{\frac{7}{5}} + \frac{2}{5}$.
Let $\eta$ be some constant to be determined and set 
$$\phi_1\pr{x,y} = \exp\pr{\eta x}.$$ 
Then by \eqref{VBd}, \eqref{aGrBd}, and \eqref{bndBelow}
\begin{align*}
\di \pr{A \gr \phi_1} - V \phi_1 
&= \brac{\eta \pr{\del_x a_{11} + \del_y a_{12}} + \eta^2 a_{11} - V} \phi_1 \\
&\ge \brac{\la \eta^2  - M - 2 \eta \mu \sqrt{M}} \phi_1.
\end{align*}
If $\eta = c_1 \sqrt{M}$ for some constant $c_1$ depending on $\la$ and $\mu$ that is sufficiently large, then $\phi_1$ is a subsolution.  
Now define $\phi_2 = \exp\pr{c_2 \sqrt{M}}$, where $c_2$ is a constant chosen so that $\phi_2 \ge \phi_1$ on $B_d$.
Since $V \ge 0$, then $L \phi_2 - V \phi_2 \le 0$, so $\phi_2$ is a supersolution.  
It follows that there exists a positive solution $\phi$ to \eqref{localPDE} such that
\begin{equation}
\exp\pr{-C_1 \sqrt{M}} \le \phi\pr{z} \le \exp\pr{C_1 \sqrt{M}} \;\;\;\ \text{ for all } z \in B_d,
\label{phiBd}
\end{equation}
where $C_1$ depends on $c_1, c_2$, and $\la$.

Furthermore, (by Theorem 8.32 from \cite{GT01}, for example) for $0 < \al < 1$, $s< 2$,  $\phi$ satisfies the interior estimate
\begin{equation}
\norm{\gr \phi}_{L^\iny\pr{Q_{\al s}}} \le C \norm{\phi}_{L^\iny\pr{Q_{s}}},
\label{intEst}
\end{equation}
where $C = C\pr{\la, K, M, s, \al}$, with
\begin{align*}
K &= \max_{i,j = 1, 2} \abs{a_{ij}}_{0, \gamma; Q_2} 
= \max_{i,j = 1, 2} \brac{\norm{a_{ij}}_{L^\iny\pr{Q_2}} + \sup_{x \ne y \in Q_2} \frac{\abs{a_{ij}\pr{x} - a_{ij}\pr{y}}}{\abs{x-y}^\gamma}},
\end{align*}
where $0 < \gamma < 1$ is arbitrary.
Note that since
\begin{align*}
\sup_{x \ne y \in Q_2} \frac{\abs{a_{ij}\pr{x} - a_{ij}\pr{y}}}{\abs{x-y}^\gamma} 
&\le \sup_{x \ne y \in Q_2} \mu \frac{\abs{x-y}}{\abs{x-y}^\gamma} 
= \sup_{x \ne y \in Q_2} \mu \abs{x-y}^{1-\gamma}
\le \mu \diam\pr{Q_2},
\end{align*}
then $C = C\pr{ \la, \mu, M, s, \al}$.
Moreover, by scaling considerations and Lemma \ref{ZsBounds},
$$C\pr{\la, \mu, M, s, \al} \le \frac{C\pr{\la, \mu, M, \al}}{s^c}.$$

Set $v = u/ \phi$.
Since $u$ and $\phi$ are both solutions to \eqref{localPDE} and $A$ is symmetric by \eqref{symm}, we see that
\begin{equation}
\di \pr{ \phi^2 A \gr v} = 0 \, \text{ in } B_d \subset  \R^2.
\label{grPDE}
\end{equation}
We use \eqref{grPDE} to define a stream function in $B_d$.  
Let $\tilde v$, with $\tilde v\pr{0} = 0$, satisfy the following system of equations
\begin{equation}
\left\{\begin{array}{rl}
\tilde{v}_y &= \phi^2 \pr{a_{11} v_x + a_{12}  v_y } \\
-\tilde{v}_x &= \phi^2 \pr{a_{12} v_x + a_{22} v_y } .
\end{array}  \right.
\label{streamFunc}
\end{equation}
Specifically, for any $\pr{x,y} \in B_d$,
\begin{equation}
\tilde v\pr{x,y} = \int_0^y \phi^2 \pr{a_{11} v_x + a_{12} v_y }\pr{0,t} dt - \int_{0}^ x \phi^2 \pr{a_{12} v_x + a_{22} v_y }\pr{s,y} ds.
\label{tildev}
\end{equation}
The stream function is used to transform the divergence-free equation into a Beltrami equation.
Set $w = \phi^2 v + i \tilde{v}$.  
Then, using \eqref{streamFunc}, we see that
\begin{align*}
D w
&= 2\phi D \phi v + \phi^2 D v + D(i \tilde{v})\\
&= 2 D\pr{\log \phi} \phi^2 v \\
&+ \phi^2 \brac{\frac{\pr{a_{11} + \det A} + i a_{21}}{\det\pr{A+I}} v_x +\frac{a _{12} + i\pr{a_{22} + \det A}}{\det\pr{A+I}} v_y } 
+ i \brac{ \frac{\pr{a_{11} +1} + i a_{12}}{\det\pr{A+I}} \tilde{v}_x + \frac{a_{12} + i\pr{a_{22} + 1}}{\det\pr{A+I}} \tilde{v}_y } \\
&= 2 D\pr{\log \phi} \phi^2 v \\
&+ \phi^2 \brac{\frac{\pr{a_{11} + \det A} + i a_{21}}{\det\pr{A+I}} v_x +\frac{a _{12} + i\pr{a_{22} + \det A}}{\det\pr{A+I}} v_y } \\
&+ i \phi^2 \brac{ -\frac{\pr{a_{11} +1} + i a_{12}}{\det\pr{A+I}} \pr{a_{12} v_x + a_{22} v_y } + \frac{a_{12} + i\pr{a_{22} + 1}}{\det\pr{A+I}} \pr{ a_{11} v_x + a_{12} v_y }}  \\
&= D\pr{\log \phi} \pr{w + \ol{w}}.
\end{align*}
Therefore,
\begin{align}
D w 
&= \al \pr{w + \bar w},
\label{diffEq}
\end{align}
where $\al = D \pr{ \log \phi}$.

The next step is to estimate $\al$.
Here we mimic the arguments from \cite{KSW15}, making appropriate modifications to account for the variable coefficients of the operator.
To understand the behavior of $\al$, we will study $\psi = \log \phi$.  
From \eqref{phiBd}, we see that
\begin{equation}
\abs{\psi\pr{z}} \le C \sqrt{M} \,\,\, \text{ in } B_d.
\label{psiBd}
\end{equation}
\nid
Furthermore, a computation shows that $\psi$ solves the following equation
\begin{equation}
\di \pr{A \gr \psi} + A \gr \psi \cdot \gr \psi = V \,\,\, \text{ in } B_d.
\label{psiPDE}
\end{equation}

\begin{lem}
If $\phi$ is a solution to \eqref{localPDE} and $\psi = \log \phi$, then
\begin{equation}
\norm{\gr \psi}_{L^\iny\pr{B_{\rho\pr{7/5}}}} \le C \sqrt{M},
\label{psiGradBd}
\end{equation}
where $C$ depends on $\la, \mu$.
\label{LinyBd}
\end{lem}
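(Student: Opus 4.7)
The plan is to prove this by rescaling at each point $z_0 \in B_{\rho(7/5)}$ so that the normalized problem has potential and coefficient variation of order one, and then to apply a combination of Harnack's inequality and standard interior gradient estimates. The key observation is that $\phi > 0$ solves the \emph{linear} elliptic equation $L\phi - V\phi = 0$, and not just the nonlinear equation \eqref{psiPDE} for $\psi$. Working directly with $\phi$ and then recovering $|\gr\psi| = |\gr\phi|/\phi$ is much cleaner than attempting a Bernstein-type argument on $\psi$.

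Fix $z_0 \in B_{\rho(7/5)}$ and set $r = 1/\sqrt{M}$. For $M$ sufficiently large (depending on $\la$), we have $2r < 2/5$, so $B_{2r}(z_0) \su B_d$; the remaining case of $M$ bounded is trivial, since then $\gr\psi$ can be controlled by a constant using \eqref{intEst} and \eqref{phiBd}. Define the rescaled function $\phi_r(z) = \phi(z_0 + rz)$ on $B_2$. A direct computation shows that $\phi_r$ is a positive solution to
\[
\di(A_r \gr \phi_r) - V_r \phi_r = 0 \quad\text{in } B_2,
\]
where $A_r(z) = A(z_0 + rz)$ and $V_r(z) = r^2 V(z_0 + rz)$. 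By construction, $\|V_r\|_{L^\iny(B_2)} \le r^2 M = 1$; the rescaled matrix $A_r$ has the same ellipticity constant $\la$ as $A$; and its Lipschitz constant satisfies $\|\gr A_r\|_{L^\iny(B_2)} \le r \cdot \mu\sqrt{M} = \mu$ by \eqref{aGrBd}.

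Now I would apply two classical results to the rescaled problem, with constants depending only on $\la$ and $\mu$. First, Moser's Harnack inequality for positive solutions of uniformly elliptic equations with bounded lower-order terms gives
\[
\sup_{B_1}\phi_r \le C \inf_{B_1}\phi_r \le C\,\phi_r(0).
\]
Second, interior gradient estimates (Theorem 8.32 of \cite{GT01}, as already used to derive \eqref{intEst}) applied to $\phi_r$ on $B_1 \subset B_2$ yield
\[
|\gr\phi_r(0)| \le C\,\|\phi_r\|_{L^\iny(B_1)}.
\]
Combining these two bounds produces $|\gr\phi_r(0)|/\phi_r(0) \le C$, with $C = C(\la,\mu)$.

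Finally, unwinding the rescaling, $\gr\phi_r(0) = r\,\gr\phi(z_0)$ and $\phi_r(0) = \phi(z_0)$, so
\[
|\gr\psi(z_0)| = \frac{|\gr\phi(z_0)|}{\phi(z_0)} = \frac{1}{r}\cdot\frac{|\gr\phi_r(0)|}{\phi_r(0)} \le \frac{C}{r} = C\sqrt{M},
\]
and since $z_0 \in B_{\rho(7/5)}$ was arbitrary, the lemma follows. The main obstacle is verifying that the constants in the Harnack and gradient estimates genuinely depend only on $\la$ and $\mu$ after the rescaling; this is exactly what the scaling was designed to achieve, since it trades the large parameter $M$ in the potential and the coefficient Lipschitz norm for unit-size quantities, while keeping the ellipticity constant unchanged.
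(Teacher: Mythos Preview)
Your argument is correct and considerably more direct than the paper's. The paper does not exploit the linear equation for $\phi$; instead it works with the nonlinear equation \eqref{psiPDE} for $\psi=\log\phi$, following the approach of \cite{KSW15}. Concretely, the paper first obtains an $L^2$ bound for $\gr\psi$ by testing \eqref{psiPDE} against a cutoff, then rescales $\vp=\psi/(C\sqrt{M})$ so that the equation acquires a small parameter $\eps=1/(C\sqrt{M})$, proves a Morrey-type energy decay $\int_{B_r}|\gr\vp|^2\le Cr^2$ by iteration, and finally bootstraps via a further rescaling together with higher integrability (from \cite{Gi83}) and $W^{2,p}$ estimates (Theorem~9.11 of \cite{GT01}) to reach the pointwise bound. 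Your route bypasses all of this by observing that positivity of $\phi$ makes the Harnack inequality available, so after the natural rescaling $r=1/\sqrt{M}$ the combination Harnack\,+\,interior gradient estimate immediately gives $|\gr\phi_r|/\phi_r\le C$ with $C=C(\la,\mu)$. The paper's approach has the virtue of working directly at the level of $\psi$ and of mirroring the argument in \cite{KSW15}, but your argument is shorter and uses only the most classical tools; the only mild care needed is to check, as you do, that the sign condition $V\ge 0$ ensures the structural hypothesis for Harnack (e.g.\ Theorem~8.20 in \cite{GT01}) is met, and that the rescaled Lipschitz bound $\|\gr A_r\|_\infty\le\mu$ controls the H\"older norm entering the gradient estimate.
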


\begin{proof}
Recall that $d = \rho\pr{7/5} + 2/5$.
Let $\te \in C^\iny_0\pr{B_d}$ be a cutoff function for which $\te \equiv 1$ in $B_{\rho\pr{7/5}+ 1/5}$.
Multiply \eqref{psiPDE} by $\te$ and integrate by parts:
\begin{align*}
\la \int \te \abs{\gr \psi}^2 
\le \int \te A \gr \psi \cdot \gr \psi 
= \int \te V - \int \di\pr{A \gr \te}  \psi
\le C\pr{M + \sqrt{M}}.
\end{align*}
It follows that
$$\int_{B_{\rho\pr{7/5}+1/5}} \abs{\gr \psi}^2 \le C M.$$

We rescale equation \eqref{psiPDE}.
Set $\vp = \frac{\psi}{C \sqrt{M}}$ for some $C > 0$.
Then \eqref{psiPDE} is equivalent to 
\begin{equation}
\eps \di \pr{A \gr \vp} + A \gr \vp \cdot \gr \vp = \tilde V \,\,\, \text{ in } B_d,
\label{vpPDE}
\end{equation}
where $\eps = \frac{1}{C \sqrt{M}}$ and $\tilde V = \frac{V}{C^2 M}$.
Now choose $C$ sufficiently large so that
\begin{align}
\norm{\tilde V}_{L^\iny\pr{B_d}} \le 1, \quad 
\norm{\vp}_{L^\iny\pr{B_d}} \le 1, \quad 
\int_{B_{\rho\pr{7/5}+1/5}} \abs{\gr \vp}^2 \le 1.
\label{scaleBds}
\end{align}

\begin{clm}
Let $c> 0$ be such that for any $z \in B_{\rho\pr{7/5}}$, $B_{2c/5}\pr{z} \su B_{\rho\pr{7/5}+1/5}$.
For any $z \in B_{\rho\pr{7/5}}$, and $\eps < r < c/5$, if \eqref{vpPDE} and \eqref{scaleBds} hold, then
$$\int_{B_r\pr{z}} \abs{\gr \vp}^2 \le C r^2.$$
\label{clm1}
\end{clm}

\begin{proof}[Proof of Claim \ref{clm1}]
It suffices to take $z = 0$.
Let $\eta \in C^\iny_0\pr{B_{2r}}$ be a cutoff function such that $\eta \equiv 1$ in $B_r$.
Set $\disp m = \abs{B_{2r}}^{-1} \int_{B_{2r}} \vp$.
By the divergence theorem
\begin{align}
0 &= \eps \int \di\pr{A \gr\brac{\pr{\vp - m} \eta^2}} \nonumber \\
&= \eps \int \di \pr{A \gr \vp} \eta^2 + 4 \eps \int \eta A \gr \vp \cdot \gr \eta + \eps \int \pr{\vp - m} \di \pr{A \gr\pr{\eta^2}}.
\label{DivThmRes}
\end{align}
We now estimate each of the three terms.
By \eqref{vpPDE} and \eqref{scaleBds},
\begin{align}
\int \eps \di \pr{A \gr \vp} \eta^2 
&= - \int A \gr \vp \cdot \gr \vp \eta^2 + \int \tilde V \eta^2
\le - \la \int \abs{\gr \vp}^2 \eta^2 + \norm{\tilde V}_{L^\iny\pr{B_d}} \int_{B_{2r}} 1 \nonumber \\
&\le - \la \int \abs{\gr \vp}^2 \eta^2 + C r^2.
\label{IBd}
\end{align}
By Cauchy-Schwarz and Young's inequality,
\begin{align}
\abs{4 \eps \int \eta A \gr \vp \cdot \gr \eta} 
&\le 4 \la \eps \pr{ \int \abs{\gr \vp}^2 \eta^2 }^{1/2} \pr{ \int \abs{\gr \eta}^2 }^{1/2} 
\le \frac{\la}{2} \int \abs{\gr \vp}^2 \eta^2 + C \eps^2.
\label{IIBd}
\end{align}
For the third term, we use the Poincar\'e inequality to show that
\begin{align}
\abs{\eps \int \pr{\vp - m} \di \pr{A \gr\pr{\eta^2}}} 
&\le C \eps r^{-2} \int_{B_{2r}} \abs{\vp - m}
\le C \pr{\int_{B_{2r}} \abs{\vp - m}^2}^{1/2} \pr{\int_{B_{2r}} \eps^2 r^{-4}}^{1/2} \nonumber \\
&\le C r \pr{\int_{B_{2r}} \abs{\gr \vp}^2}^{1/2} \pr{ \eps^2 r^{-2}}^{1/2}
\le C \eps^2 + \frac{1}{400} \int_{B_{2r}} \abs{\gr \vp}^2.
\label{IIIBd}
\end{align}
Combining \eqref{DivThmRes}-\eqref{IIIBd}, we see that
\begin{align}
\int_{B_r} \abs{\gr \vp}^2 \le C \eps^2 + C r^2 + \frac{1}{200 \la} \int_{B_{2r}} \abs{\gr \vp}^2
\le C r^2 + \frac{1}{200 \la} \int_{B_{2r}} \abs{\gr \vp}^2.
\label{combinedEst}
\end{align}

If $r^2 \ge \frac{1}{200}$, then by the last estimate of \eqref{scaleBds}, the inequality above implies that
$$\int_{B_r} \abs{\gr \vp}^2 \le C r^2.$$
Otherwise, if $r^2 < \frac{1}{200}$, choose $k \in \N$ so that
$$\frac{c}{5} \le 2^k r \le \frac{2c}{5}.$$
Clearly, $r^2 \ge C \pr{1/200 \la}^k$.
It follows from repeatedly applying \eqref{combinedEst} that
\begin{align*}
\int_{B_r} \abs{\gr \vp}^2
\le C r^2 + \pr{\frac{1}{200 \la}}^k \int_{B_{2^kr}} \abs{\gr \vp}^2
\le C r^2,
\end{align*}
proving the claim.
\end{proof}

We now use Claim \ref{clm1} to give a pointwise bound for $\gr \vp$ in $B_{\rho\pr{7/5}}$.
Define 
$$\vp_\eps\pr{z} = \frac{1}{\eps} \vp\pr{\eps z}, \;\; A_\eps\pr{z} = A\pr{\eps z}, \;\; L_\eps = \di A_\eps \gr.$$
Then
$$\gr \vp_\eps\pr{z} = \gr \vp \pr{\eps z}, \qquad  L_\eps \vp_\eps\pr{z} = \eps L \vp \pr{\eps z}.$$
It follows from \eqref{vpPDE} that
\begin{equation*}
L_\eps \vp_\eps + A_\eps \gr \vp_\eps \cdot \gr \vp_\eps = \tilde V\pr{\eps z} : = \tilde V_\eps\pr{z},
\end{equation*}
$$\norm{\tilde V_\eps}_{L^\iny\pr{B_1}} \le 1.$$
Moreover,
\begin{align*}
\int_{B_{2}}\abs{\gr \vp\pr{\eps z}}^2 
= \frac{1}{\eps^2} \int_{B_{2 \eps}} \abs{\gr \vp}^2
\le \frac{1}{\eps^2} C \eps^2 = C,
\end{align*}
where we have used Claim \ref{clm1}.
It follows from Theorem 2.3 and Proposition 2.1 in Chapter V of \cite{Gi83} that there exists $p > 2$ such that
\begin{equation}
\norm{\gr \vp_\eps}_{L^p\pr{B_1}} \le C.
\label{QiaBd}
\end{equation}
Now we define
$$\tilde \vp_\eps \pr{z} = \vp_\eps\pr{z} - \frac{1}{\abs{B_1}} \int_{B_1} \vp_\eps.$$
Since $\gr \tilde \vp_\eps = \gr \vp_\eps$, then 
$$L_\eps \tilde \vp_\eps = - A_\eps \gr \tilde \vp_\eps \cdot \gr \tilde \vp_\eps + \tilde V_\eps : = \zeta \quad \text{ in } B_1.$$
Clearly, $\norm{\zeta}_{L^{p/2}\pr{B_1}} \le C$.
Moreover, by H\"older, Poincar\'e and \eqref{QiaBd},
$$\norm{\tilde \vp_\eps}_{L^{p/2}\pr{B_1}} 
\le C \norm{\tilde \vp_\eps}_{L^{p}\pr{B_1}} 
\le C \norm{\gr \tilde \vp_\eps}_{L^p\pr{B_1}} \le C.$$
By Theorem 9.11 from \cite{GT01}, for every $\eps < \eps_0$,
$$\norm{\tilde \vp_\eps}_{W^{2, p/2}\pr{B_r}} \le C,$$
for any $r < 1$, where $C$ depends on $\eps_0$ and $r$.
By repeating these arguments, we obtain that
$$\norm{\gr \tilde \vp_\eps}_{L^\iny\pr{B_{r^\prime}}} = \norm{\gr \vp_\eps}_{L^\iny\pr{B_{r^\prime}}} = \norm{\gr \vp}_{L^\iny\pr{B_{ \eps r^\prime}}} \le C,$$
for $r^\prime < r$.
This derivation works for any $z \in B_{\rho\pr{7/5}}$ and any $\eps < \eps_0$. 
Since $\vp = \frac{\psi}{C \sqrt{M}}$, conclusion of the lemma follows.
\end{proof}

Using that the coefficients of $D$ are bounded, we obtain the following corollary.

\begin{cor}
If $\al = D \psi$ and $\norm{\gr \psi}_{L^\iny\pr{B_{\rho\pr{7/5}}}} \le C \sqrt{M}$, then
\begin{equation}
\norm{\al}_{L^\iny\pr{B_{\rho\pr{7/5}}}} \le C \sqrt{M}.
\label{alBd}
\end{equation}
\label{alCor}
\end{cor}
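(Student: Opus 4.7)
The statement is an immediate pointwise consequence of the definition of $D$ and the uniform boundedness of its coefficients. Recall that
\[
D\psi = \bar\partial \psi + \eta(z)\, \partial \psi + \nu(z)\, \overline{\partial \psi},
\]
where $\bar\partial = \tfrac{1}{2}(\partial_x + i\partial_y)$ and $\partial = \tfrac{1}{2}(\partial_x - i\partial_y)$. Since $\psi$ is real-valued (as $\psi = \log\phi$ with $\phi>0$), the operators $\bar\partial\psi$, $\partial\psi$, and $\overline{\partial\psi}$ each have modulus bounded by $|\nabla \psi|$.

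Thus, first I would simply estimate
\[
|\alpha(z)| = |D\psi(z)| \le |\bar\partial \psi(z)| + |\eta(z)|\,|\partial \psi(z)| + |\nu(z)|\,|\overline{\partial \psi}(z)| \le \bigl(1 + |\eta(z)| + |\nu(z)|\bigr)\,|\nabla \psi(z)|.
\]
By Lemma \ref{etanuBds}, $|\eta(z)| + |\nu(z)| \le \frac{1-\lambda}{1+\lambda} < 1$, so the factor in parentheses is bounded by a constant depending only on $\lambda$. Taking the supremum over $B_{\rho(7/5)}$ and applying the hypothesis $\|\nabla \psi\|_{L^\infty(B_{\rho(7/5)})} \le C\sqrt{M}$ yields the desired bound \eqref{alBd}.

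There is no real obstacle here; the corollary is just the statement that a first-order linear differential operator with bounded coefficients sends $W^{1,\infty}$ to $L^\infty$ with an explicit constant. Its role in the sequel is presumably to feed the bound on the coefficient of the Beltrami-type equation \eqref{diffEq} into the similarity principle (Corollary \ref{simCor}), so that the multiplicative factor produced there will be of order $\exp(C\sqrt{M})$, matching the size of $\phi$ in \eqref{phiBd}.
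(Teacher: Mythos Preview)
Your proposal is correct and matches the paper's own reasoning exactly: the paper simply states that the corollary follows ``using that the coefficients of $D$ are bounded,'' and you have spelled out precisely that one-line pointwise estimate. There is nothing to add.
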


We have now have all the tools we need to prove Theorem \ref{localEst}.

\begin{proof}
As shown using the stream function \eqref{streamFunc}, if $u$ is a solution to \eqref{localPDE} in $B_d$, then $w = \phi^2 v + i \tilde v$ is a solution to \eqref{diffEq} in $B_d \Supset B_{\rho\pr{7/5}}$.
By the similarity principle given in Lemma \ref{simPrinc} and Corollary \ref{simCor}, any solution to \eqref{diffEq} in $B_{\rho\pr{7/5}}$ is a function of the form
$$w\pr{z} = f\pr{z} g\pr{z},$$
with
$$D_w f = 0 \;\; \text{ in } \; B_{\rho\pr{7/5}}$$
and
\begin{align}
\exp\pr{-C \sqrt{M} } \le & \abs{g\pr{z}} \le \exp\pr{C \sqrt{M}} \;\; \text{ in } B_{\rho\pr{7/5}},
\label{gBnd}
\end{align}
where we have used \eqref{alBd} and the definition of $D_w$ is given in \eqref{DwDef}. 
By Corollary \ref{3circle}, the Hadamard three-quasi-circle theorem applied to $D_w$, 
\begin{align}
\norm{f}_{L^\iny\pr{Q_{s_1}}} \le \pr{\norm{f}_{L^\iny\pr{Q_{s/2}}}}^\te \pr{\norm{f}_{L^\iny\pr{Q_{s_2}}}}^{1 - \te},
\label{3balls}
\end{align}
where $\frac{s}{2} < s_1 < s_2 < \frac{7}{5}$ and 
$$\te = \frac{\log\pr{s_2/s_1}}{\log\pr{2s_2/s}}.$$
Substituting $f = w g^{-1}$ into \eqref{3balls} and using \eqref{gBnd}, we see that
\begin{align}
\norm{ w}_{L^\iny\pr{Q_{s_1}}} \le \exp\pr{C \sqrt{M}} \pr{\norm{ w}_{L^\iny\pr{Q_{s/2}}}}^\te \pr{\norm{ w}_{L^\iny\pr{Q_{s_2}}}}^{1 - \te}.
\label{3balls2}
\end{align}
Since $w = \phi^2 v + i \tilde v = \phi u + i \tilde v$, then
\begin{align*}
\abs{\phi u} \le \abs{w} \le \abs{\phi u} +\abs{\tilde v}.
\end{align*}
It follows from expression \eqref{tildev} and Lemma \ref{ZsBounds} that for any $z \in Q_s$, where $s < 2$,
\begin{align*}
\abs{\tilde v\pr{z}} \le C s^c \exp\pr{2 C_1 \sqrt{M}} \norm{\gr v}_{L^\iny\pr{Q_s}} .
\end{align*}
Using that $v = u/\phi$, the bounds for $\phi$ in \eqref{phiBd}, and the interior estimate \eqref{intEst}, we see that
setting $s_1 = 1$ and $s_2 = \frac{6}{5}$ in \eqref{3balls2} gives
\begin{align*}
\norm{u}_{L^\iny\pr{Q_{1}}}
&\le \exp\pr{C \sqrt{M}} \pr{ s^{-c} \norm{ u}_{L^\iny\pr{Q_{s}}}}^\te \pr{\norm{ u}_{L^\iny\pr{Q_{7/5}}}}^{1 - \te} \\
&\le \exp\pr{C \sqrt{M}} \pr{s^{-c} \norm{ u}_{L^\iny\pr{Q_{s}}}}^\te,
\end{align*}
where we have applied \eqref{uBd} to the last term on the right.
Since $\norm{u}_{L^\iny\pr{Q_1}} \ge \norm{u}_{L^\iny\pr{B_b}} \ge 1$, we have
\begin{equation*}
\norm{ u}_{L^\iny\pr{Q_{s}}} \ge s^{c}\exp\pr{-\frac{C \sqrt{M}}{\te}}.
\end{equation*}
By Lemma \ref{ZsBounds}, $Q_s \su B_{s^{c_1}}$.
It follows that
\begin{align*}
\norm{ u}_{L^\iny\pr{B_{r}}} \ge r^{C \sqrt{M}},
\end{align*}
as claimed
\end{proof}

\section{The proof of Theorem \ref{localEstgrW}}
\label{localEstgrWProof}

By building on the techniques from the previous section, we show here how to prove Theorem \ref{localEstgrW}. 
To transform equation \eqref{localPDEgrW} into a divergence-free equation, we will construct a positive solution to the adjoint equation.
That is, we show there exists a positive solution $\phi$ to \eqref{localPDEngW}.

Let $\eta$ be some constant to be determined and set 
$$\phi_1\pr{x,y} = \exp\pr{\eta x}.$$ 
Then by \eqref{VBd},  \eqref{WBd}, \eqref{aGrBd}, and \eqref{bndBelow}
\begin{align*}
\di \pr{A \gr \phi_1} - W \cdot \gr \phi_1 - V \phi_1 
&= \brac{\eta \pr{\del_x a_{11} + \del_y a_{12}} + \eta^2 a_{11} - \eta W_1 - V} \phi_1 \\
&\ge \brac{ \la \eta^2  - M - 2 \eta \mu \sqrt{M} - \eta \sqrt{M}} \phi_1.
\end{align*}
If $\eta = c_1 \sqrt{M}$ for some constant $c_1$ depending on $\la$ and $\mu$ that is sufficiently large, then $\phi_1$ is a subsolution.  
Now define $\phi_2 = \exp\pr{c_2 \sqrt{M}}$, where $c_2$ is a constant chosen so that $\phi_2 \ge \phi_1$ on $B_d$.
Since $V \ge 0$, then $L \phi_2 - W \cdot \gr \phi_2 - V \phi_2 \le 0$, so $\phi_2$ is a supersolution.  
It follows that there exists a positive solution $\phi$ to \eqref{localPDEngW} such that \eqref{phiBd} holds.
As above, a version of the interior estimate \eqref{intEst} holds for $\phi$.
Set $v = u/ \phi$.
Using the equations for $u$ and $\phi$, and that $A$ is symmetric, we see that
\begin{equation}
\di \pr{ \phi^2 A \gr v + \phi^2 W v} = 0 \text{ in } B_d \subset  \R^2.
\label{grPDEgrW}
\end{equation}

Since $\phi^2 A \gr v + \phi^2 W v$ is divergence-free, then there exists $\tilde v$, with $\tilde v\pr{0} = 0$, for which
\begin{equation}
\left\{\begin{array}{rl}
\del_y \tilde{v} &= \phi^2 \pr{a_{11} v_x + a_{12} v_y + W_1 v } \\
-\del_x \tilde{v} &= \phi^2 \pr{a_{12} v_x + a_{22} v_y + W_2 v} .
\end{array}  \right.
\label{streamFuncgrW}
\end{equation}
That is, for any $\pr{x,y} \in B_d$,
\begin{equation}
\tilde v\pr{x,y} = \int_0^y \phi^2 \pr{a_{11} v_x + a_{12} v_y + W_1 v }\pr{0,t} dt - \int_{0}^ x \phi^2 \pr{a_{12} v_x + a_{22} v_y + W_2 v}\pr{s,y} ds.
\label{tildev2}
\end{equation}
Set $w = \phi^2 v + i \tilde{v}$.  
Then, using \eqref{streamFuncgrW}, 
\begin{align*}
D w 
&= 2\phi D \phi v + \phi^2 D v + D(i\tilde{v}) \\
&= 2 D\pr{\log \phi} \phi^2 v \\
&+ \phi^2 \brac{\frac{\pr{a_{11} + \det A} + i a_{21}}{\det\pr{A+I}} v_x +\frac{a _{12} + i\pr{a_{22} + \det A}}{\det\pr{A+I}} v_y } 
+ i \brac{ \frac{\pr{a_{11} +1} + i a_{12}}{\det\pr{A+I}} \tilde{v}_x + \frac{a_{12} + i\pr{a_{22} + 1}}{\det\pr{A+I}} \tilde{v}_y } \\
&= 2 D\pr{\log \phi} \phi^2 v \\
&+ \phi^2 \brac{\frac{\pr{a_{11} + \det A} + i a_{21}}{\det\pr{A+I}} v_x +\frac{a _{12} + i\pr{a_{22} + \det A}}{\det\pr{A+I}} v_y } \\
&+ i \phi^2 \brac{ -\frac{\pr{a_{11} +1} + i a_{12}}{\det\pr{A+I}} \pr{a_{12} v_x + a_{22} v_y + W_2 v} + \frac{a_{12} + i\pr{a_{22} + 1}}{\det\pr{A+I}} \pr{ a_{11} v_x + a_{12} v_y + W_1 v}}  \\
&= 2\brac{D\pr{\log \phi} + \frac{a_{12} - i\pr{a_{11} +1}}{2 \det\pr{A+I}} W_2 + \frac{-\pr{a_{22} + 1} + ia_{12}}{2 \det\pr{A+I}} W_1 } \phi^2 v.
\end{align*}
Therefore,
\begin{align}
D w 
&= \be \pr{w + \bar w}
\label{diffEq2}
\end{align}
where 
\begin{align*}
\be &= D\pr{\log \phi} + \frac{a_{12} - i\pr{a_{11} +1}}{2 \det\pr{A+I}} W_2 + \frac{-\pr{a_{22} + 1} + ia_{12}}{2 \det\pr{A+I}} W_1 \\
&:= \al + \frac{a_{12} - i\pr{a_{11} +1}}{2 \det\pr{A+I}} W_2 + \frac{-\pr{a_{22} + 1} + ia_{12}}{2 \det\pr{A+I}} W_1.
\end{align*}

\begin{lem}
If $\phi$ is a solution to \eqref{localPDEngW} and $\psi = \log \phi$, then
\begin{equation*}
\norm{\gr \psi}_{L^\iny\pr{B_{\rho\pr{7/5}}}} \le C \sqrt{M},
\end{equation*}
where $C$ depends on $\la, \mu$.
\label{LinyBdwW}
\end{lem}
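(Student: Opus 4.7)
The plan is to follow the proof of Lemma \ref{LinyBd} closely, modified to handle the extra first-order drift term arising from the magnetic potential. Since $\phi$ satisfies \eqref{localPDEngW}, dividing by $\phi$ shows that $\psi = \log\phi$ satisfies
\begin{equation*}
\di\pr{A\gr\psi} + A\gr\psi\cdot\gr\psi = W\cdot\gr\psi + V \quad\text{in } B_d.
\end{equation*}
This agrees with \eqref{psiPDE} except for the drift $W\cdot\gr\psi$, whose pointwise size is at most $\sqrt{M}\abs{\gr\psi}$ by \eqref{WBd}. The overarching strategy is that after the same rescaling $\vp = \psi/(C\sqrt{M})$, $\eps = 1/(C\sqrt{M})$ used in Lemma \ref{LinyBd}, this drift becomes $\eps W\cdot\gr\vp$ with $\abs{\eps W}\le 1/C$ bounded uniformly in $M$, so it behaves as a harmless lower-order perturbation.

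First I would prove the global $L^2$ bound $\int_{B_{\rho\pr{7/5}+1/5}}\abs{\gr\psi}^2 \le CM$. Testing against a cutoff $\te\in C^\iny_0(B_d)$ with $\te\equiv 1$ on $B_{\rho\pr{7/5}+1/5}$ and integrating by parts, the new term $\int\te W\cdot\gr\psi$ is estimated via Young's inequality by $\tfrac{\la}{2}\int\te\abs{\gr\psi}^2 + CM$ and absorbed into the left-hand coercive term, while the remaining terms are controlled exactly as in Lemma \ref{LinyBd}. Rescaling to $\vp = \psi/(C\sqrt{M})$ then yields
\begin{equation*}
\eps\di\pr{A\gr\vp} + A\gr\vp\cdot\gr\vp = \eps W\cdot\gr\vp + \tilde V \quad\text{in } B_d,
\end{equation*}
with $\norm{\tilde V}_{L^\iny(B_d)},\norm{\vp}_{L^\iny(B_d)}\le 1$ and $\int_{B_{\rho\pr{7/5}+1/5}}\abs{\gr\vp}^2 \le 1$. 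Next, I would establish the analogue of Claim \ref{clm1}: for $z\in B_{\rho\pr{7/5}}$ and $\eps<r<c/5$, $\int_{B_r(z)}\abs{\gr\vp}^2 \le Cr^2$. The argument is essentially verbatim; the single new term $\eps\int W\cdot\gr\vp\,\eta^2$ is bounded by $\tfrac{\la}{4}\int\abs{\gr\vp}^2\eta^2 + C\eps^2 Mr^2 = \tfrac{\la}{4}\int\abs{\gr\vp}^2\eta^2 + Cr^2$ using $\eps^2 M = 1/C^2$, which is absorbed as before.

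The final step rescales at scale $\eps$ via $\vp_\eps(z)=\vp(\eps z)/\eps$, yielding $L_\eps\vp_\eps + A_\eps\gr\vp_\eps\cdot\gr\vp_\eps = \eps W_\eps\cdot\gr\vp_\eps + \tilde V_\eps$ on $B_1$ with uniformly bounded coefficients and $\int_{B_2}\abs{\gr\vp_\eps}^2\le C$. Giaquinta's reverse H\"older (Theorem 2.3 and Proposition 2.1 in Chapter V of \cite{Gi83}) promotes this $L^2$ bound to $\norm{\gr\vp_\eps}_{L^p(B_1)}\le C$ for some $p>2$. Writing the equation for the zero-mean correction $\tilde\vp_\eps$ as $L_\eps\tilde\vp_\eps = \zeta$ with $\zeta = -A_\eps\gr\tilde\vp_\eps\cdot\gr\tilde\vp_\eps + \eps W_\eps\cdot\gr\vp_\eps + \tilde V_\eps \in L^{p/2}(B_1)$, Theorem 9.11 of \cite{GT01} gives $\norm{\tilde\vp_\eps}_{W^{2,p/2}(B_r)}\le C$, and a bootstrap yields pointwise control on $\gr\vp_\eps$, hence on $\gr\vp$ and $\gr\psi$. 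The main obstacle is the same as in Lemma \ref{LinyBd}, namely the delicate bootstrap from integral to pointwise control via Giaquinta and Calder\'on--Zygmund theory; the presence of $W$ poses no fundamentally new difficulty because its prescribed scale $\sqrt{M}$ matches exactly the scale at which $\abs{\gr\psi}$ is expected to be bounded, so in the rescaled problem the drift remains uniformly bounded and every new contribution can be absorbed at the cost of a harmless constant.
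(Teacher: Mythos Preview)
Your proposal is correct and follows exactly the approach the paper indicates: the paper omits the proof entirely, stating only that it is analogous to that of Lemma \ref{LinyBd} with the magnetic potential $W$ included, and that the arguments from \cite{KSW15} may be combined with the proof of Lemma \ref{LinyBd}. Your write-up carries out precisely those modifications---absorbing the additional drift term $W\cdot\gr\psi$ (which becomes $\eps W\cdot\gr\vp$ with $\abs{\eps W}\le 1/C$ after rescaling) at each stage of the $L^2$ estimate, Claim \ref{clm1}, and the Giaquinta/Calder\'on--Zygmund bootstrap---so there is nothing to add.
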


The proof of Lemma \ref{LinyBdwW} is analagous to that of Lemma \ref{LinyBd}, except that we must include the magnetic potential $W$.
We omit the details since the arguments in \cite{KSW15} may be combined with the proof of Lemma \ref{LinyBd} above.
If we combine Lemma \ref{LinyBdwW}, Corollary \ref{alCor}, the bounds on $A$ from \eqref{bndBelow} and \eqref{bndAbove}, and the bounds on $W$ in \eqref{WBd}, we see that
\begin{align*}
\norm{\be}_{L^\iny\pr{B_{\rho\pr{7/5}}}} 
&\le \norm{\al}_{L^\iny\pr{B_{\rho\pr{7/5}}}} + C \norm{W_1}_{L^\iny\pr{B_{\rho\pr{7/5}}}} + C \norm{W_2}_{L^\iny\pr{B_{\rho\pr{7/5}}}}  \\
&\le C \sqrt{M}
\end{align*}
The proof of Theorem \ref{localEstgrW} follows that of Theorem \ref{localEst}, where we replace the bounds for $\al$ with the bounds for $\be$.

\section{The proof of Theorem \ref{localEstngW}}
\label{localEstngWProof}

The establish the local order-of-vanishing estimate for \eqref{localPDEngW}, we will use a trick similar to the one that appears in \cite{KSW15}.
Instead of transforming \eqref{localPDEngW} into a divergence-free equation and defining a stream function, we construct an equation of the form $D w = \widetilde W w$. Recall that $\det A = 1$ in \eqref{localPDEngW}.
Also, we now have an ellipticity constant of $\la^2$ in \eqref{ellip} instead of $\la$.
Therefore, in what follows, the shortened notations $\si\pr{\cdot}$ and $\rho\pr{\cdot}$ stand for $\si\pr{\cdot; \la^2}$ and $\rho\pr{\cdot; \la^2}$.

As shown in the previous section, there exists a positive solution $\phi$ to \eqref{localPDEngW} such that \eqref{phiBd} and \eqref{intEst} hold.
Set $v = u/ \phi$, where $u$ is a solution to \eqref{localPDEngW}.
It follows that
\begin{align}
L v + \pr{ 2 A \gr \psi - W } \cdot \gr v
&= \di \pr{A \frac{\gr u}{\phi} - A \frac{u \gr \phi}{\phi^2}} +  \pr{2 \frac{A \gr \phi}{\phi} - W}\cdot\pr{\frac{\gr u}{\phi} - \frac{u \gr \phi}{\phi^2}} =0,
\label{Lveqn}
\end{align}
where $\psi = \log \phi$.
Using the decomposition given by Lemma \ref{decompLem}, we may rewrite \eqref{Lveqn} as
\begin{equation}
D \widetilde D v = \widetilde W \widetilde D v + \pr{W - 2 A \gr \psi} \cdot \gr v.
\label{Deqn}
\end{equation}

\begin{lem}
There exists $\widetilde \Upsilon$ so that 
\begin{equation}
\pr{W - 2 A \gr \psi} \cdot \gr v =\widetilde \Upsilon \widetilde D v.
\label{UpsEqn}
\end{equation}
Moreover,
\begin{equation}
\norm{\widetilde \Upsilon}_{L^\iny\pr{B_{\rho\pr{7/5}}}} \le C \sqrt{M}.
\label{UpsBd}
\end{equation}
\end{lem}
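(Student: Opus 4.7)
The plan is to define $\widetilde\Upsilon$ as a pointwise algebraic ratio and bound it by exploiting the ellipticity of $A+I$. First I would observe that
\[
\widetilde D v = \brac{(1+a_{11})v_x + a_{12}v_y} - i\brac{a_{12}v_x + (1+a_{22})v_y},
\]
so the real and imaginary parts of $\widetilde D v$ are, up to sign, exactly the two components of $(A+I)\gr v$. Consequently $\abs{\widetilde D v} = \abs{(A+I)\gr v}$, and since $A$ has ellipticity constant $\la^2$ in this setting, the matrix $A+I$ has smallest eigenvalue at least $1+\la^2$, giving the pointwise bound $\abs{\widetilde D v} \ge (1+\la^2)\abs{\gr v}$. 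In particular, $\widetilde D v(z) = 0$ precisely when $\gr v(z) = 0$.

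With this in hand, I would define
\[
\widetilde\Upsilon(z) := \begin{cases} \dfrac{\pr{W - 2A\gr\psi}\cdot\gr v}{\widetilde D v}(z) & \text{if } \widetilde D v(z) \ne 0, \\ 0 & \text{if } \widetilde D v(z) = 0. \end{cases}
\]
At the zero set of $\widetilde D v$ both sides of \eqref{UpsEqn} vanish by the previous step, so \eqref{UpsEqn} holds trivially there and by construction elsewhere. Cauchy--Schwarz and the lower bound on $\abs{\widetilde D v}$ then give
\[
\abs{\widetilde\Upsilon(z)} \le \frac{\abs{W(z) - 2A(z)\gr\psi(z)}\abs{\gr v(z)}}{(1+\la^2)\abs{\gr v(z)}} = \frac{\abs{W(z) - 2A(z)\gr\psi(z)}}{1+\la^2}
\]
wherever $\gr v \ne 0$ and hence, by the definition of $\widetilde\Upsilon$, everywhere. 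Combined with $\norm{A}_{L^\iny} \le \la^{-2}$ and hypothesis \eqref{WCBd}, matters reduce to showing $\norm{\gr\psi}_{L^\iny(B_{\rho(7/5)})} \le C\sqrt{M}$.

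This gradient bound is the main obstacle and is the natural analog of Lemma \ref{LinyBd} for the non-divergence magnetic setting. From \eqref{localPDEngW} one derives the PDE satisfied by $\psi = \log\phi$, which now carries an additional first-order term coming from the drift $-W\cdot\gr\phi$. The proof then mimics the three-step structure of Lemma \ref{LinyBd}: a Caccioppoli-type estimate obtained by pairing against $\theta\psi$ for a suitable cutoff $\theta$ produces an $L^2$ bound on $\gr\psi$ after the $\sqrt{M}$ rescaling; a Gehring-type higher-integrability argument upgrades this to $\gr\psi\in L^p$ for some $p>2$; and interior $W^{2,p/2}$ estimates with a bootstrap yield the desired $L^\iny$ bound on $B_{\rho(7/5)}$ with constant $C\sqrt{M}$. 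The extra drift term contributes bounded lower-order pieces which are absorbed on the right-hand side at each stage, so the adaptation is essentially routine but requires careful bookkeeping to ensure that the new term does not inflate the constants.
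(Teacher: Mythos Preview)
Your argument is correct and in fact slightly cleaner than the paper's. Both constructions produce the same function $\widetilde\Upsilon$, since the relation $\widetilde\Upsilon\,\widetilde D v = (W-2A\gr\psi)\cdot\gr v$ together with $\widetilde D v\ne 0$ determines $\widetilde\Upsilon$ uniquely; the difference is only in how the bound is obtained. The paper introduces an auxiliary $\Upsilon=e+if$, takes the real part $\tfrac12\bigl[\Upsilon\widetilde D v+\overline{\Upsilon\widetilde D v}\bigr]$, solves the $2\times 2$ linear system in $e,f$ explicitly (inverting $A+I$), and then bounds the resulting formulas term by term. You bypass this algebra entirely by observing that $\abs{\widetilde D v}=\abs{(A+I)\gr v}\ge(1+\la^2)\abs{\gr v}$ and applying Cauchy--Schwarz to the numerator, which is a more transparent route to the same estimate. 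For the gradient bound $\norm{\gr\psi}_{L^\iny(B_{\rho(7/5)})}\le C\sqrt{M}$, the paper simply invokes Lemma~\ref{LinyBdwW} (with $\la$ replaced by $\la^2$), since $\phi$ is precisely the positive solution to \eqref{localPDEngW} constructed in Section~\ref{localEstgrWProof}; your sketch of its proof is accurate but you could equally well just cite that lemma directly.
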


\begin{proof}
Set $\Upsilon = e + i f$, where $e, f$ are real-valued functions to be determined.
Then
\begin{align*}
\Upsilon \widetilde D v 
&= \pr{e + i f} \set{\brac{1 + a_{11} - i a_{12}}\del_x v + \brac{a_{12} - i \pr{1 + a_{22}}}\del_y v} \\
&= \brac{e \pr{1 + a_{11}} + f a_{12}}\del_x v + \brac{e a_{12} + f \pr{1 + a_{22}}} \del_y v 
+ i \set{\brac{f\pr{1 + a_{11}} - e a_{12}} \del_x v + \brac{f a_{12} - e\pr{1 + a_{22}}} \del_y v}
\end{align*}
so that,
\begin{align*}
\frac{1}{2}\brac{\Upsilon \widetilde D v + \overline{\Upsilon  \widetilde D v} }
&= \brac{e \pr{1 + a_{11}} + f a_{12}}\del_x v + \brac{e a_{12} + f \pr{1 + a_{22}}} \del_y v.
\end{align*}
If we define 
$$\widetilde \Upsilon = \left\{\begin{array}{ll} \frac{1}{2}\brac{\Upsilon + \bar {\Upsilon}\frac{ \overline{ \widetilde D v}}{\widetilde D v}} & \text{ whenever }  \widetilde D v \ne 0 \\ 0 & \text{ otherwise } \end{array}\right.,$$
then \eqref{UpsEqn} will be satisfied if we choose $e$, $f$ so that
\begin{align*}
e \pr{1 + a_{11}} + f a_{12} &= W_1 - 2 a_{11} \frac{\del_x \phi}{\phi} - 2 a_{12} \frac{\del_y \phi}{\phi} \\
e a_{12} + f \pr{1 + a_{22}} &= W_2 - 2 a_{12}\frac{\del_x \phi}{\phi} - 2 a_{22} \frac{\del_y \phi}{\phi}.
\end{align*}
Solving this system, we see that
\begin{align*}
\brac{\begin{array}{l} e \\ f \end{array}} 
&= \frac{1}{\det\pr{A+I}} \brac{\begin{array}{ll} 1 + a_{22} & - a_{12} \\ - a_{12} & 1 + a_{11}  \end{array}}  \brac{\begin{array}{l} W_1 - 2 a_{11} \frac{\del_x \phi}{\phi} - 2 a_{12} \frac{\del_y \phi}{\phi} \\ W_2 - 2 a_{12}\frac{\del_x \phi}{\phi} - 2 a_{22} \frac{\del_y \phi}{\phi} \end{array}}  \\
&=  \frac{1}{\det\pr{A+I}} \brac{\begin{array}{l} \pr{1 + a_{22}} W_1 - a_{12} W_2 -2\pr{1 + a_{11}}\frac{\del_x \phi}{\phi} - 2 a_{12}  \frac{\del_y \phi}{\phi} \\
- a_{12}W_1 + \pr{1 + a_{11}} W_2  - 2 a_{12}  \frac{\del_x \phi}{\phi}  -2 \pr{1 + a_{22}}  \frac{\del_y \phi}{\phi}  \end{array}}.
\end{align*}
We may apply Lemma \ref{LinyBdwW}, with $\la$ replaced by $\la^2$ wherever necessary, to conclude that $\norm{\gr \psi}_{L^\iny\pr{B_{\rho\pr{7/5}}}} \le C \sqrt{M}$.
Combining this with the bounds on $A$ and $W$ leads to \eqref{UpsBd} and completes the proof.
\end{proof}

Returning to \eqref{Deqn}, we see that
$$D \widetilde D v = \pr{\widetilde W + \widetilde \Upsilon} \widetilde D v.$$
Since
\begin{align*}
\norm{\widetilde W + \widetilde \Upsilon}_{L^\iny\pr{B_{\rho\pr{7/5}}}} &\le C \sqrt{M},
\end{align*}
then we may apply the results from the previous section to the equation above, where $\widetilde D v$ now plays the role of $w$.
An application of the similarity principle, Lemma \ref{simPrinc} applied to $D$, shows that
$$\widetilde D v\pr{z} = f\pr{z} g\pr{z},$$
where $D f = 0$ in $B_{\rho\pr{7/5}}$ and $\exp\pr{- C \sqrt{M}} \le \abs{g\pr{z}} \le \exp\pr{C \sqrt{M}}$.
Then the Hadamard three-quasi-circle theorem (with respect to the operator $D$) is used as in the proof of Theorem \ref{localEst} above, along with $\|{\widetilde D v}\| \sim \|{\gr v}\|$, to show that
\begin{align}
\norm{ \gr v}_{L^\iny\pr{Q_{s_1}}} \le \exp\pr{C \sqrt{M}} \pr{\norm{ \gr v}_{L^\iny\pr{Q_{s/2}}}}^\te \pr{\norm{ \gr v}_{L^\iny\pr{Q_{s_2}}}}^{1 - \te},
\label{3balls2gr}
\end{align}
where $s/2 < s_1$, $s_1 = 6/5$, $s_2 = 13/10$ and
$$\te = \frac{\log\pr{s_2/s_1}}{\log\pr{2s_2/s}}.$$
Using the interior bound \eqref{intEst}, as well as the bound on $u$ given in \eqref{uBd}, we have
\begin{align}
\norm{ \gr v}_{L^\iny\pr{Q_{6/5}}} \le \exp\pr{C \sqrt{M}} \pr{s^{-c} \norm{u}_{L^\iny\pr{Q_{s}}}}^\te.
\label{3bgrLHS}
\end{align}

To complete the proof, we need to bound the lefthandside from below using the assumption that $\norm{u}_{L^\iny\pr{Q_1}} \ge \norm{u}_{L^\iny\pr{B_b}} \ge 1$.
We repeat the argument from \cite{KSW15} here.
This assumption implies that there exists $z_0 \in Q_1$ such that $\abs{u\pr{z_0}} \ge 1$. 
Without loss of generality, we'll assume that $u\pr{z_0} \ge 1$.
Since $u$ is real-valued, then for any $a > 0$, we have that either $u\pr{z} \ge a$ for all $z \in Q_{6/5}$, or there exists $z_1 \in Q_{6/5}$ such that $u\pr{z_1} < a$.
We'll need to choose $a$ appropriately.
If the second case holds, then by \eqref{phiBd} we see that
\begin{align*}
\frac{u\pr{z_1}}{\phi\pr{z_1}} \le \frac{a}{\phi\pr{z_1}} \le a \exp\pr{C_1 \sqrt{M}},
\end{align*}
while
\begin{align*}
\frac{u\pr{z_0}}{\phi\pr{z_0}} \ge \exp\pr{-C_1 \sqrt{M}}.
\end{align*}
If we set $a = \frac{1}{2} \exp\pr{- 2 C_1 \sqrt{M}}$ then
\begin{align*}
\frac{u\pr{z_1}}{\phi\pr{z_1}} \le \frac{1}{2}\exp\pr{-C_1 \sqrt{M}}
\end{align*}
and it follows that
\begin{align*}
C \norm{\gr v}_{L^\iny\pr{Q_{6/5}}} 
\ge \abs{v\pr{z_0} - v\pr{z_1}} 
\ge \frac{u\pr{z_0}}{\phi\pr{z_0}} - \frac{u\pr{z_1}}{\phi\pr{z_1}}
\ge \frac{1}{2}\exp\pr{-C_1 \sqrt{M}}.
\end{align*}
Combining this bound with \eqref{3bgrLHS} and Lemma \ref{ZsBounds} leads to the proof of the theorem.
If we are in the former case, then $u\pr{z} \ge a$ for all $z \in Q_{6/5}$ and the conclusion of the theorem is obviously satisfied.
The proof of the Theorem \ref{localEstngW} is now complete.

\begin{rem}
Using the similar ideas as in \cite{KSW15}, one could also study the quantitative Landis conjecture for \eqref{EPDE}, \eqref{EPDEgrW}, \eqref{EPDEngW} defined in an exterior domain.  We leave this generalization to the reader.
\end{rem}

\renewcommand{\theequation}{A.\arabic{equation}}

\setcounter{equation}{0}  
\section*{Appendix}  

In the appendix, we will prove Lemma~\ref{decompLem}. 
Equivalently, we need to show that
\begin{align*}
\pr{D + \widetilde W} \widetilde D 
&= \di A \gr \\
&= \pr{\del_x a_{11} + \del_{y} a_{12} } \del_x + \pr{\del_x a_{12} + \del_y a_{22}} \del_y + a_{11} \del_{xx} + 2 a_{12} \del_{xy} + a_{22} \del_{yy}.
\end{align*}
To start, we use the notation $\disp \widetilde W = \frac{w_1 + i w_2}{\det\pr{A+I}}.$
Because we assume that $\det A = 1$, then $\nu\pr{z} = 0$ and $D$ is given by \eqref{DDet1Expr}.
Since
\begin{align*}
D \widetilde D 
&= \frac{1}{\det\pr{A+I}}\set{\brac{{1+a_{11}} + i a_{12}}\del_x + \brac{a_{12} + i\pr{1 + a_{22}}}\del_y } \set{\brac{1 + a_{11} - i a_{12}} \del_x + \brac{a_{12} - i\pr{1+a_{22}}} \del_y} \\
&= \frac{1}{\det\pr{A+I}} \brac{ \pr{1+a_{11}} \del_x a_{11} + a_{12} \del_x a_{12}  
+ a_{12}\del_y a_{11}  + \pr{1 + a_{22}} \del_y a_{12} } \del_x \\
&+ \frac{i}{\det\pr{A+I}} \brac{ - \pr{1+a_{11}} \del_x a_{12} + a_{12} \del_x a_{11} - a_{12} \del_y a_{12} + \pr{1 + a_{22}} \del_y a_{11} } \del_x \\
&+ \frac{1}{\det\pr{A+I}} \brac{ \pr{1+a_{11}}\del_x a_{12} + a_{12} \del_x a_{22}  + a_{12} \del_y a_{12} + \pr{1 + a_{22}}  \del_y a_{22} }   \del_y \\
&+ \frac{i}{\det\pr{A+I}} \brac{ - \pr{1+a_{11}}\del_x a_{22} + a_{12} \del_x a_{12} - a_{12} \del_y a_{22} + \pr{1 + a_{22}}\del_y a_{12} } \del_y \\
&+ a_{11} \del_{xx} + 2 a_{12} \del_{xy}+ a_{22}\del_{yy} ,
\end{align*}
and
\begin{align*}
\widetilde W  \widetilde D 
&= \frac{w_1 + i w_2}{\det\pr{A+I}}\set{\brac{1 + a_{11} - i a_{12}} \del_x + \brac{a_{12} - i\pr{1+a_{22}}} \del_y} \\
&= \frac{1}{\det\pr{A+I}} \set{\brac{w_1\pr{1 + a_{11}} + w_2 a_{12}} + i \brac{w_2\pr{1 + a_{11}} - w_1 a_{12}}} \del_x \\
&+ \frac{1}{\det\pr{A+I}} \set{\brac{ w_1 a_{12} + w_2\pr{1 + a_{22}}} + i \brac{w_2 a_{12} - w_1\pr{1 + a_{22}} }} \del_y,
\end{align*}
then it suffices to show that the following four equations are satisfied
\begin{align*}
\det\pr{A+I} \pr{ \del_x a_{11} + \del_{y} a_{12} }
&=  \pr{1+a_{11}} \del_x a_{11} + a_{12} \del_x a_{12}  + a_{12}\del_y a_{11}  + \pr{1 + a_{22}} \del_y a_{12}
+ \pr{1+a_{11}} w_1 + a_{12} w_2 \\
0 &=  - \pr{1+a_{11}} \del_x a_{12} + a_{12} \del_x a_{11} - a_{12} \del_y a_{12} + \pr{1 + a_{22}} \del_y a_{11}
+ \pr{1+a_{11}} w_2 - a_{12} w_1 \\
\det\pr{A+I} \pr{\del_x a_{12} + \del_y a_{22} }
&= \pr{1+a_{11}}\del_x a_{12} + a_{12} \del_x a_{22}  + a_{12} \del_y a_{12} + \pr{1 + a_{22}}  \del_y a_{22}    
+ a_{12} w_1 + \pr{1 + a_{22}} w_2 \\
0 &= - \pr{1+a_{11}}\del_x a_{22} + a_{12} \del_x a_{12} - a_{12} \del_y a_{22} + \pr{1 + a_{22}}\del_y a_{12} 
+ a_{12} w_2 - \pr{1 + a_{22}} w_1 .
\end{align*}

Since $\det A = 1$, then $a_{11} a_{22} - a_{12}^2 = 1$ and
\begin{align}
\del_x a_{22} &= \frac{2 a_{12} \del_x a_{12} - a_{22} \del_x a_{11}  }{a_{11}} 
\label{a22xDer} \\
\del_y a_{22} &= \frac{2 a_{12} \del_y a_{12} - a_{22} \del_y a_{11}  }{a_{11}}.
\label{a22yDer}
\end{align}
Replacing the derivatives of $a_{22}$ with \eqref{a22xDer} and \eqref{a22yDer}, then simplyfing, the four equations above are equivalent to
\begin{align*}
\pr{1+a_{11}} w_1 + a_{12} w_2
&=\pr{ 1 + a_{22}} \del_x a_{11} - a_{12} \del_x a_{12}  - a_{12}\del_y a_{11}  + \pr{1 + a_{11} } \del_{y} a_{12}  \\
\pr{1+a_{11}} w_2 - a_{12} w_1 
&=- a_{12} \del_x a_{11} + \pr{1+a_{11}} \del_x a_{12} - \pr{1 + a_{22}} \del_y a_{11} + a_{12} \del_y a_{12} \\
a_{12} w_1 + \pr{1 + a_{22}} w_2
&= \frac{a_{12} a_{22} }{a_{11}}\del_x a_{11} + \pr{\frac{1 + a_{11} - a_{12}^2}{a_{11}}} \del_x a_{12} - \frac{\pr{1 + a_{11}}a_{22}}{a_{11}} \del_y a_{11} +  \frac{\pr{2 + a_{11}} a_{12}}{a_{11}} \del_y a_{12}  \\
\pr{1 + a_{22}} w_1 -a_{12} w_2
&= \frac{\pr{1+a_{11}}a_{22} }{a_{11}}\del_x a_{11} - \frac{ \pr{2+a_{11}} a_{12} }{a_{11}} \del_x a_{12} + \frac{a_{12}a_{22}}{a_{11}} \del_y a_{11} + \frac{1 + a_{11} -a_{12}^2}{a_{11}} \del_y a_{12}.
\end{align*}

Noting that $\pr{1+a_{11}}^2 + a_{12}^2 = a_{11} \det\pr{A+I}$, the first pair of equations may be solved for $w_1$ and $w_2$:
\begin{align*}
\brac{\begin{array}{c} w_1 \\ w_2 \end{array}} 
&= \frac{1}{a_{11} \det\pr{A+I}} \brac{\begin{array}{cc} 1+ a_{11} & -a_{12} \\ a_{12} & 1 + a_{11} \end{array}} \brac{\begin{array}{cc} 
\pr{ 1 + a_{22}} \del_x a_{11} - a_{12} \del_x a_{12}  - a_{12}\del_y a_{11}  + \pr{1 + a_{11} } \del_{y} a_{12} \\
- a_{12} \del_x a_{11} + \pr{1+a_{11}} \del_x a_{12} - \pr{1 + a_{22}} \del_y a_{11} + a_{12} \del_y a_{12} 
\end{array}} \\
&=  \brac{\begin{array}{cc} 
\frac{ a_{11} + a_{22} + 2a_{11}a_{22}}{a_{11} \det\pr{A+I}} \del_x a_{11} - \frac{2\pr{1+a_{11}}a_{12}}{a_{11} \det\pr{A+I}} \del_x a_{12}  + \frac{a_{12}\pr{a_{22} - a_{11}}}{a_{11} \det\pr{A+I}} \del_y a_{11}  + \frac{\pr{1 + a_{11} }^2 - a_{12}^2}{a_{11} \det\pr{A+I}} \del_{y} a_{12} \\
\frac{a_{12}\pr{a_{22} - a_{11}}}{a_{11} \det\pr{A+I}} \del_x a_{11} + \frac{\pr{1+a_{11}}^2 - a_{12}^2}{a_{11} \det\pr{A+I}} \del_x a_{12} - \frac{a_{11} + a_{22} + 2a_{11}a_{22}}{a_{11} \det\pr{A+I}} \del_y a_{11} + \frac{2\pr{1 + a_{11}}a_{12}}{a_{11} \det\pr{A+I}} \del_y a_{12} 
\end{array}},
\end{align*}
which is consistent with the definition of $\widetilde W$ given in the statement of the lemma.

Similarly, since $\pr{1+a_{22}}^2 + a_{12}^2 = a_{22} \det\pr{A+I}$, the second pair of equations also implies that
\begin{align*}
\brac{\begin{array}{c} w_1 \\ w_2 \end{array}} 
&= \frac{1}{a_{22} \det\pr{A+I}} \brac{\begin{array}{cc} 1+ a_{22} & a_{12} \\ -a_{12} & 1 + a_{22} \end{array}} \\
& \times \brac{\begin{array}{cc} 
\frac{\pr{1+a_{11}}a_{22} }{a_{11}}\del_x a_{11} - \frac{ \pr{2+a_{11}} a_{12} }{a_{11}} \del_x a_{12} + \frac{a_{12}a_{22}}{a_{11}} \del_y a_{11} + \frac{1 + a_{11} -a_{12}^2}{a_{11}} \del_y a_{12} \\
\frac{a_{12} a_{22} }{a_{11}}\del_x a_{11} + \pr{\frac{1 + a_{11} - a_{12}^2}{a_{11}}} \del_x a_{12} - \frac{\pr{1 + a_{11}}a_{22}}{a_{11}} \del_y a_{11} +  \frac{\pr{2 + a_{11}} a_{12}}{a_{11}} \del_y a_{12}
\end{array}} \\
&=  \brac{\begin{array}{cc} 
\frac{ a_{11} + a_{22} + 2a_{11}a_{22}}{a_{11} \det\pr{A+I}} \del_x a_{11} - \frac{2\pr{1+a_{11}}a_{12}}{a_{11} \det\pr{A+I}} \del_x a_{12}  + \frac{a_{12}\pr{a_{22} - a_{11}}}{a_{11} \det\pr{A+I}} \del_y a_{11}  + \frac{\pr{1 + a_{11} }^2 - a_{12}^2}{a_{11} \det\pr{A+I}} \del_{y} a_{12} \\
\frac{a_{12}\pr{a_{22} - a_{11}}}{a_{11} \det\pr{A+I}} \del_x a_{11} + \frac{\pr{1+a_{11}}^2 - a_{12}^2}{a_{11} \det\pr{A+I}} \del_x a_{12} - \frac{a_{11} + a_{22} + 2a_{11}a_{22}}{a_{11} \det\pr{A+I}} \del_y a_{11} + \frac{2\pr{1 + a_{11}}a_{12}}{a_{11} \det\pr{A+I}} \del_y a_{12} 
\end{array}}.
\end{align*}
This completes the proof of the decomposition lemma.

\bibliography{refs}
\bibliographystyle{plain}

\end{document}